\xpatchcmd{\section}{\scshape}{\large\bfseries}{}{\PatchFailure}
\numberwithin{equation}{section}
\numberwithin{figure}{section}
\theoremstyle{remark}
\newtheorem*{rem*}{\protect\remarkname}
\theoremstyle{plain}
\newtheorem{thm}{\protect\theoremname}[section]
\theoremstyle{plain}
\newtheorem{assumption}[thm]{\protect\assumptionname}
\theoremstyle{plain}
\newtheorem{cor}[thm]{\protect\corollaryname}
\theoremstyle{plain}
\newtheorem*{assumption*}{\protect\assumptionname}
\theoremstyle{definition}
\newtheorem*{defn*}{\protect\definitionname}
\theoremstyle{remark}
\theoremstyle{plain}
\newtheorem{prop}[thm]{\protect\propositionname}
\theoremstyle{plain}
\theoremstyle{plain}
\providecommand{\definitionname}{Definition}
\providecommand{\lemmaname}{Lemma}
\providecommand{\propositionname}{Proposition}
\providecommand{\remarkname}{Remark}
\providecommand{\theoremname}{Theorem}
\newlength{\otstup} 
\newcommand{\uppskip}{\vspace{-2\otstup}}
\newcommand{\comment}[1]{}
\newcommand{\q}[1]{{``#1''}}
\newcommand{\matr}[1]{
\begin{pmatrix}
#1
\end{pmatrix}}
\newcommand{\smatr}[1]{\br{ \begin{smallmatrix}#1\end{smallmatrix} } }
\DeclareMathOperator{\wind}{wind}
\DeclareMathOperator{\dom}{dom}
\DeclareMathOperator{\spf}{Sf}
\DeclareMathOperator{\Mor}{Mor}
\DeclareMathOperator{\Pos}{Pos}
\DeclareMathOperator{\Spec}{Spec}
\DeclareMathOperator{\Mult}{Mult}
\renewcommand{\mod}{\;\mathrm{mod}\;}
\DeclareMathOperator{\one}{\mathds{1}} 
\DeclareMathOperator{\rank}{rank}
\newcommand{\inv}{^{-1}}
\newcommand{\txt}[1]{\;\;\text{#1}\;\;}
\newcommand{\br}[1]{\left(#1\right)}
\newcommand{\brr}[1]{\left[#1\right]}
\newcommand{\bra}[1]{\left\langle #1 \right\rangle}
\newcommand{\abs}[1]{\left|#1\right|}
\newcommand{\norm}[1]{\left\|#1\right\|}
\newcommand{\case}[1]{\begin{cases}#1\end{cases}}
\newcommand{\set}[1]{\left\{#1\right\}}
\newcommand{\sett}[2]{\left\{#1 \,:\, #2\right\}}
\newcommand{\tor}[1]{\stackrel{{#1}\hspace{0.2em}}{\longrightarrow}}
\newcommand{\darr}{\dashrightarrow} 
\newcommand\xvec[1]{\vec{\rule{0pt}{1.4ex}\smash{#1}}}
\newcommand{\fvec}{\xvec{f}}
\providecommand{\corollaryname}{Corollary}
\providecommand{\assumptionname}{Assumption}
\providecommand{\corollaryname}{Corollary}
\providecommand{\definitionname}{Definition}
\providecommand{\lemmaname}{Lemma}
\providecommand{\propositionname}{Proposition}
\providecommand{\remarkname}{Remark}
\providecommand{\theoremname}{Theorem}
\begin{document}

\global\long\def\rmi{\mathbf{\textrm{i}}}%
\global\long\def\rme{\mathbf{\textrm{e}}}%
\global\long\def\rmd{\mathbf{\textrm{d}}}%
\global\long\def\id{\mathbf{1}}%
\global\long\def\d{\partial}%
\global\long\def\Z{\mathbb{Z}}%
\global\long\def\R{\mathbb{R}}%
\global\long\def\C{\mathbb{C}}%
\global\long\def\N{\mathbb{N}}%
\global\long\def\Q{\mathbb{Q}}%
\global\long\def\B{\mathcal{B}}%
\global\long\def\I{\mathcal{I}}%
\global\long\def\spec#1{\mathrm{Spec}\left(#1\right)}%
\global\long\def\mult#1#2{\mathrm{Mult}_{#1}\left(#2\right)}%

\global\long\def\sf#1{\mathrm{SF}_{#1}}%


\global\long\def\RR{\mathbb{\overline{R}}}%
\global\long\def\zepi{\left[0,\pi\right)}%
\global\long\def\U{\mathcal{U}}%
\global\long\def\mi{_{\mathrm{min}}}%
\global\long\def\ma{_{\mathrm{max}}}%


\global\long\def\E{\mathcal{E}}%
\global\long\def\Ev{\mathcal{E}_{v}}%
\global\long\def\V{\mathcal{V}}%
\global\long\def\lmin{\ell\mi}%
\global\long\def\vp{v_{+}}%
\global\long\def\vm{v_{-}}%
\global\long\def\vpm{v_{\pm}}%
\global\long\def\del{\delta_{\alpha}}%


\setcounter{tocdepth}{1} 

\global\long\def\tra{\tau_{\alpha}}%
\global\long\def\tro{\tau_{0}}%
\global\long\def\trah{\hat{\tau}_{\alpha}}%
\global\long\def\troh{\hat{\tau}_{0}}%
\global\long\def\Ha{H_{\alpha}}%
\global\long\def\HBa{H^{\B}_{\alpha}}%
\global\long\def\HBo{H^{\B}_{0}}%
\global\long\def\La{\Lambda_{\alpha}}
\global\long\def\LBa{\La^{\B}}
\global\long\def\LB{\Lambda^{\B}}
\global\long\def\QQ{\mathcal{Q}}
\global\long\def\Qat{\QQ_{\alpha}^{t}}
\global\long\def\nua{\nu_{\alpha}}
\global\long\def\P{\mathcal{P}}
\global\long\def\Pa{\P_{\alpha}}
\global\long\def\Paf{\abs{\P_{\alpha}(f)}}
\global\long\def\Pof{\abs{\P_{0}(f)}}
\global\long\def\Ga{\Gamma_{\alpha}}
\global\long\def\Gaf{\Ga^f}

\global\long\def\fvp{f\thinspace(\vp)}
\global\long\def\fvm{f\thinspace(\vm)}
\global\long\def\eps{\varepsilon}
\global\long\def\oi{\left[0,\infty\right]}
\global\long\def\io{\left[-\infty,0\right]}
\global\long\def\ii{\left[-\infty,\infty\right]}


\title{Spectral flow and Robin domains on metric graphs}
\author{Ram Band, Marina Prokhorova, and Gilad Sofer}
\address{Department of Mathematics, Technion - Israel Institute of Technology (Haifa, Israel)
and Institute of Mathematics, University of Potsdam (Potsdam, Germany)}
\email{ramband@technion.ac.il}
\address{Department of Mathematics, University of Haifa (Haifa, Israel)
and Department of Mathematics, Technion - Israel Institute of Technology (Haifa, Israel)}
\email{marina.p@technion.ac.il}
\address{Department of Mathematics, Technion - Israel Institute of Technology (Haifa, Israel)}
\email{gilad.sofer@campus.technion.ac.il}

\begin{abstract}
This paper is devoted to the Neumann-Kirchhoff Laplacian on a finite metric graph. 
We prove an index theorem relating the nodal deficiency of an eigenfunction $f$
with (1) the Morse index of the Dirichlet-to-Neumann map, 
(2) its positive index and the first Betti number of the graph. 
We then generalize this result, replacing nodal points of $f$ with its \q{Robin points}
(these are points with a prescribed value of $\nicefrac{f'}{f}$,
known as the Robin parameter, or delta coupling, or cotangent of Pr\"ufer angle).
This provides the Robin count, a generalization of the nodal and Neumann counts of an eigenfunction. 
We relate the Robin count deficiency with the positive index of the Robin map 
(a generalization of the Dirichlet-to-Neumann map). 
In addition, we show that two of the relevant indices are independent of the Pr\"ufer angle.
Our main tool is the spectral flow of the Laplacian with special families of boundary conditions.
As an application of our results, we show that the spectral flow of these families 
is related to topological properties of the graph, 
such as its Betti number, the number of interaction vertices, 
and their positions with respect to the graph cycles.
\end{abstract}

\maketitle

\uppskip
\tableofcontents

\setlength{\parskip}{5pt plus 2pt minus 2pt}

\section{Introduction and main results}\label{sec: Intro and main results}

By Courant's nodal line theorem \cite{Courant23},
\begin{equation}\label{eq: def-pos}
n-\nu(f_{n})\geq0,
\end{equation}
where $f_{n}$ is the $n$-th eigenfunction of the Laplacian (or Schr\"odinger operator) 
on a manifold and $\nu(f_{n})$ is the number of nodal domains of $f_{n}$. 
The expression on the left hand side of \eqref{eq: def-pos} is often called the \emph{nodal deficiency}. 
In recent years, the nodal deficiency has gained several interesting interpretations
via index theorems, expressing it as a stability index (Morse index) of various operators. 
These theorems can be categorized into three classes, based on the relevant operator: 
(1) the two-sided Dirichlet-to-Neumann map \cite{BerCoxMar_lmp19,MR3718436}, 
(2) an energy functional on the space of partitions 
\cite{BanBerRazSmi_cmp12,BerKucSmi_gafa12,BerRazSmi_jpa12,BonHel_book17,HelHofTer_aip09,HofKenMugPlu_ieot21,KenKurLenDel_cvpde21,kennedy2023cheeger},
and (3) an eigenvalue of the magnetic Laplacian \cite{BerWey_ptrsa14,MR3125554,Col_apde13}.
Additional insights may then be obtained by drawing connections between these three types of index theorems 
\cite{BerCoxHelSun_jga22,BerCoxLatSuk_arxiv24,BerKuc_jst22,HelSun_cpde22}.
These index theorems may also be classified according to the space on which the Laplacian acts: 
a manifold, a metric graph, or a discrete graph. 
Experience shows that results obtained for one of these structures 
often lead to progress in studying the others. 
Nevertheless, there are currently two significant gaps within the theory: 
the lack of index theorems for the nodal deficiency via the magnetic Laplacians on manifolds, 
and the absence of Dirichlet-to-Neumann index theorems for the nodal deficiency on graphs. 

The current work addresses this second gap. 
Our Theorem~\ref{thm:nodal-def} provides such an index theorem for finite metric graphs. 
We then present its generalization, Theorem~\ref{thm:Robin-def}, 
counting points with a prescribed value of $\nicefrac{f'}{f}$ 
and corresponding domains (which we call \q{Robin domains}) of an eigenfunction $f$
instead of its zeros and nodal domains. 
We express the correspondent deficiency in terms of a generalization of the Dirichlet-to-Neumann map
which we call the Robin map. 

Our notion of Robin domains is also related to the recent progress
made in the study of Robin partitions \cite{kennedy2023cheeger}. The
relation between the nodal deficiency and the Robin map is provided
by the spectral flow of an appropriately defined operator family.
In addition, we show that the spectral flow of such families is related 
to the number of independent cycles in the graph 
and may be considered under the perspective of inverse spectral geometric problems
(Theorems~\ref{thm:beta-beta} and \ref{thm:SF-R-beta}). 

\subsection*{Quantum graph preliminaries.}\label{subsec:Quantum-graph-preliminaries}

A \emph{metric graph} is a triple $\Gamma=(\V,\E,\vec{\ell})$,
where $(\V,\E)$ is a combinatorial graph and 
$\vec{\ell}\in\mathbb{R}_{+}^{\E}$ is a vector of positive lengths associated with edges. 
Each $e\in\E$ is identified with the interval $\brr{0,\ell_{e}}$, 
so $\Gamma$ inherits a natural structure of a metric space.

For each vertex $v$, we denote the set of edges connected to $v$ by $\Ev$. 
The degree $\deg(v)=\abs{\Ev}$ of a vertex $v$ is the number of edges connected to $v$.
If an edge is a loop, that is, connects $v$ to itself, then both its ends are counted in $\deg(v)$.

A finite \emph{Quantum Graph} is a finite metric graph $\Gamma$ 
equipped with a self-adjoint second order differential operator $H$
and with boundary conditions at vertices which render $H$ self-adjoint.
It is common to take the Schr\"odinger operator $H=-\frac{d^{2}}{dx^{2}}+q(x)$,
where $q\in L^{\infty}(\Gamma)$ is a real-valued potential.
The domain of the corresponding unbounded operator is the subspace of the second order Sobolev space
\[ H^{2}(\Gamma)\coloneqq\oplus_{e\in\E}H^{2}(e) \]
determined by boundary conditions.
The resulting unbounded operator has compact resolvent. 
Its spectrum is an infinite discrete subset of $\R$ which is bounded from below 
and consists of eigenvalues of finite multiplicities, denoted by 
\begin{equation}\label{eq:lambdai}
	\lambda_{1}\leq\lambda_{2}\leq...\nearrow\infty.
\end{equation}
For introductory texts on quantum graphs see e.g. 
\cite{BanGnu_qg-exerices18,Berkolaiko_qg-intro17,BerKuc_graphs,GnuSmi_ap06,Kurasov_Book}.

In this paper we take $\Gamma$ to be a finite connected metric graph
equipped with the Laplacian $H=-d^{2}/dx^{2}$ (that is, without a potential), 
with the Neumann-Kirchhoff conditions at all vertices except some specified subset $\B\subset\V$. 
This condition is given by formulas:
\begin{align*}
  f \text{ is continuous at } v: \quad & f_{e}(v)=f_{e'}(v)\text{ for every } e,e'\in\Ev; \\
  \text{Current conservation: } \; & \sum_{e\in\Ev}\nabla f_{e}(v)=0, 
\end{align*}
where $f_e$ denotes the restriction of $f\in H^{2}(\Gamma)$ to the edge $e$
and $\nabla f_{e}(v)$ denotes the outgoing derivative of $f_e$ at $v$ 
(that is, the derivative taken in the direction from the vertex $v$ into the edge $e$).

\subsection*{Dirichlet-to-Neumann map.}\label{subsec:DtN-map}

Let $\B\subset\V$ be a subset of vertices and $\mu$ be a real number.
The two-sided Dirichlet-to-Neumann map (abbreviated as DtN map) $\LB(\mu)$, $\mu\in\R$,
is a self-adjoint operator on $\C^{\B}$ 
defined as follows. 
For $v\in\B$ and a vector $\xi\in\C^{\B}$, denote by $\xi(v)$ its $v$-coordinate. 
Consider the following boundary value problem on $\Gamma$: 
\begin{align*}
 & -d^{2}f/dx^{2}=\mu f  \text{ on }\Gamma,\nonumber \\
 & f_{e}(v)=\xi(v)  \text{ for }~v\in\B, \, e\in\Ev, \\ 
 & f\text{ satisfies Neumann-Kirchhoff conditions}  \text{ at vertices }v\in\V\setminus\B.
\end{align*}
Suppose that this boundary value problem has only trivial solution for $\xi=0$. 
Then it has a unique solution, which we denote by $f^{\xi}$, for every $\xi\in\C^{\B}$. 
The Dirichlet-to-Neumann map $\Lambda=\Lambda^{\B}(\mu):\C^{\B}\rightarrow\C^{\B}$ is then defined by the formula 
\begin{equation}\label{eq:DtN-def}
\br{\Lambda\xi}(v)=-\sum_{e\in\Ev}\nabla f^{\xi}_{e}(v)\text{ for }v\in\B.
\end{equation}

\begin{rem*}
We follow the definition of the two-sided Dirichlet-to-Neumann map 
presented in \cite{BerCoxHelSun_jga22,BerCoxMar_lmp19}.
We choose the minus sign in \eqref{eq:DtN-def} in order to match the standard convention 
used in higher-dimensional setting (making this map bounded from below). 
In the quantum graph community, the opposite sign is used more often, so one needs to be careful using our results.
\end{rem*}

\subsection*{Nodal deficiency.}\label{subsec:nodal-def}

Let $(\lambda,f)$ be an eigenpair of the Neumann-Kirchhoff Laplacian $H$. 
A point $x$ of $\Gamma$ is called a \emph{nodal point} of $f$ if $f(x)=0$.
The set $\P_0(f)$ of all such points is called the \emph{nodal set} of $f$
(we always suppose that it is a finite subset of $\Gamma$).
Let $\Gamma_0^f\coloneqq\overline{\Gamma\setminus\P_0(f)}$ be the graph
obtained by cutting $\Gamma$ at nodal points of $f$,
that is, the closure of $\Gamma$ with nodal points removed.
The connected components of this graph are called \emph{nodal domains} of $f$ 
and their number is denoted by $\nu_{0}(f)$. 

The natural setting for discussing nodal domains 
(or more general Robin domains that we introduce later in the paper) 
is when the function $f$ is real-valued. 
For that reason, throughout the text we choose the \emph{eigenfunctions to be real-valued} 
(this is always possible).

\begin{defn*}
We denote by $n(\lambda)$ and $N(\lambda)$ the \emph{first} and the \emph{last} place
where $\lambda$ appears in the ordered list \eqref{eq:lambdai} of eigenvalues of $H$. 
\end{defn*}

If $\lambda$ is simple, then $N(\lambda)=n(\lambda)$; in the general case 
\[ N(\lambda)=n(\lambda)+\Mult(\lambda)-1, \]
where $\Mult(\lambda)$ denotes the multiplicity of the eigenvalue $\lambda$. 
The difference $N(\lambda)-\nu_{0}(f)$ may be called the \q{upper nodal deficiency}; 
for simple eigenvalues it coincides with the usual nodal deficiency $n(\lambda)-\nu_{0}(f)$.

Consider the Dirichlet-to-Neumann map 
\[ \Lambda_0^{f}\coloneqq\Lambda^{\P_0(f)} \]
placed at the set $\P_0(f)$ of nodal points of $f$. 
Our first result expresses the nodal deficiencies of $f$ in terms of the DtN operators
$\Lambda_0^{f}(\lambda\pm\eps)\colon\C^{\B}\to\C^{\B}$,
which are well defined and invertible for $\eps>0$ small enough.

The number of negative eigenvalues, counted with multiplicities, of a self-adjoint operator $\Lambda$ 
is called the Morse index of $\Lambda$ and is denoted by $\Mor\Lambda$. 
Similarly, we denote by $\Pos\Lambda$ the positive index of $\Lambda$,
that is, the number of its positive eigenvalues. 

\begin{thm}\label{thm:nodal-def} 
Let $(\lambda,f)$ be an eigenpair
of the Neumann-Kirchhoff Laplacian $H$, with $f$ real-valued and
not vanishing at a vertex of degree larger than two. Then 
\begin{align}
N(\lambda)-\nu_{0}(f) & =\Mor\Lambda_0^{f}(\lambda+\eps), \label{eq:upper-nodal-def} \\
n(\lambda)-1 & =\Mor\Lambda_0^{f}(\lambda-\eps) \label{eq:n-1}
\end{align}
for $\eps>0$ small enough. 
The last formula can be equivalently written as 
\begin{equation}\label{eq:nodal-def}
  n(\lambda)-\nu_{0}(f)=\beta(\Gamma)-\beta(\Gamma_0^f)-\Pos\Lambda_0^{f}(\lambda-\eps),
\end{equation}
where $\Gamma_0^f$ denotes the disjoint union of nodal domains of $f$
and $\beta$ is the first Betti number (the number of independent cycles) of the corresponding graph.
\end{thm}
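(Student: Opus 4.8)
The plan is to derive all three formulas from a spectral flow argument applied to the Dirichlet-to-Neumann map $\Lambda_0^f(\mu)$ as $\mu$ varies over an interval around $\lambda$. The key structural fact, which I would establish first, is the relationship between the invertibility of $\Lambda_0^f(\mu)$ and the spectrum of $H$: for $\mu$ not an eigenvalue of $H$, the boundary value problem defining $\Lambda_0^f$ has a well-defined solution, and $\mu$ is an eigenvalue of $H$ precisely when the DtN map placed at $\P_0(f)$ (or more precisely, the associated quadratic form) degenerates in the appropriate sense. More useful is the classical monotonicity: $\mu\mapsto\Lambda_0^f(\mu)$ is a real-analytic, monotonically decreasing family of self-adjoint operators on $\mathbb{C}^{\P_0(f)}$ wherever it is defined (this follows from differentiating the defining boundary value problem and integrating by parts, exactly as for the classical DtN map). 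Consequently, as $\mu$ increases through a value where the graph $\Gamma_0^f$ — the graph cut at nodal points — has an eigenvalue, eigenvalues of $\Lambda_0^f(\mu)$ cross zero from $+$ to $-$; counting these crossings is what produces the Morse indices.

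Next I would identify the two ``reference'' quantities. On one hand, the eigenvalues of $H$ itself on $\Gamma$ appear as the parameter values $\mu$ where the full boundary value problem (with a free boundary at $\P_0(f)$ glued back) has nontrivial solutions, which by the standard Dirichlet-to-Neumann correspondence are exactly the zeros of $\det\Lambda_0^f(\mu)$ away from eigenvalues of $\Gamma_0^f$. On the other hand, at $\mu=\lambda$ the eigenfunction $f$ itself, being a nonzero solution vanishing at all of $\P_0(f)$, shows $\lambda$ is a Dirichlet eigenvalue of $\Gamma_0^f$; the hypothesis that $f$ does not vanish at vertices of degree $>2$ guarantees that cutting at $\P_0(f)$ produces exactly $\nu_0(f)$ components each carrying $f$ as a Dirichlet ground state, so $\lambda$ is the lowest Dirichlet eigenvalue on each nodal domain with one-dimensional eigenspace there. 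This is the step where the degree hypothesis is essential and where I expect the main subtlety: one must check that at a nodal point which happens to be a vertex of degree two, cutting behaves like cutting an interior point of an edge, so no spurious components or multiplicities arise, and the count of ``interior'' versus ``vertex'' nodal points matches the Euler-characteristic bookkeeping.

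With these ingredients, formula \eqref{eq:upper-nodal-def} follows by computing the spectral flow of $\Lambda_0^f(\mu)$ across $[\lambda,\lambda+\eps]$: for $\eps$ small there are no eigenvalues of $\Gamma_0^f$ in $(\lambda,\lambda+\eps]$, the map is invertible at both ends, and $\Mor\Lambda_0^f(\lambda+\eps)$ counts exactly the zeros of $\det\Lambda_0^f$ in $(\lambda,\lambda+\eps]$ together with the jump at $\lambda$ itself — which together equal $N(\lambda)$ minus the number of eigenvalues of $\Gamma_0^f$ strictly below $\lambda$, the latter being $\nu_0(f)$ since each nodal domain contributes its ground state $\lambda$ but none below. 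For \eqref{eq:n-1} one argues symmetrically on $[\lambda-\eps,\lambda)$: there $\Lambda_0^f(\lambda-\eps)$ is invertible, its Morse index counts zeros of $\det\Lambda_0^f$ on $(-\infty,\lambda-\eps]$ below the threshold, which is $n(\lambda)-1$ since the eigenvalues of $H$ below $\lambda$ are exactly $\lambda_1,\dots,\lambda_{n(\lambda)-1}$ and none of the low eigenvalues of $\Gamma_0^f$ interfere for $\eps$ small. Finally, \eqref{eq:nodal-def} is pure linear algebra: $\Lambda_0^f(\lambda-\eps)$ acts on $\mathbb{C}^{\P_0(f)}$, so $\Mor + \Pos = \abs{\P_0(f)}$ (invertibility kills the kernel), and the elementary identity $\abs{\P_0(f)} = \nu_0(f) - 1 + \beta(\Gamma) - \beta(\Gamma_0^f)$ — itself an Euler-characteristic computation for the cut graph, again using the degree hypothesis to count cut points correctly — converts \eqref{eq:n-1} into \eqref{eq:nodal-def} after substituting $\Mor = \abs{\P_0(f)} - \Pos$.
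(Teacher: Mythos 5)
Your architecture is viable and genuinely different from the paper's: you propose to run the spectral flow of the Dirichlet-to-Neumann family $\mu\mapsto\Lambda_0^f(\mu)$ in the spectral parameter (the Berkolaiko--Cox--Marzuola route), whereas the paper fixes $\mu=\lambda\pm\eps$ and runs the spectral flow of the Hamiltonian family $H_0^f(t)$ in the coupling constant $t$ of the $\delta(t)$ condition at the nodal points, converting it on one side into $\Mor\Lambda_0^f(\lambda\pm\eps)$ via the correspondence $\dim\ker\br{H_0^f(t)-\mu}=\dim\ker\br{\Lambda_0^f(\mu)+t}$ and on the other side into $N(\lambda)-\nu_0(f)$ and $n(\lambda)-1$ via uniform lower bounds of the family. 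The ingredients you isolate --- monotonicity of the DtN map, the kernel correspondence with $H$, the fact that $\lambda$ is a simple ground state of each nodal domain (where the non-vanishing hypothesis enters), and the identity $\abs{\P_0(f)}=\nu_0(f)-1+\beta(\Gamma)-\beta(\Gamma_0^f)$ --- are the right ones, and your deduction of \eqref{eq:nodal-def} from \eqref{eq:n-1} coincides with the paper's.

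There is, however, a genuine error in the mechanism driving your count, and it is not cosmetic. You assert that eigenvalues of $\Lambda_0^f(\mu)$ cross zero from $+$ to $-$ when $\mu$ passes an eigenvalue of the cut graph $\Gamma_0^f$. In fact the zero crossings occur at eigenvalues of $H$ (this is the kernel correspondence you yourself invoke in the next paragraph), while at eigenvalues of the decoupled operator $H_0^f(\infty)$ the boundary value problem defining $\Lambda_0^f$ degenerates and eigenvalue branches pass through \emph{infinity}: a (decreasing) branch runs to $-\infty$ as $\mu$ approaches such a value from below and re-emerges from $+\infty$ above it. It is exactly this pole behaviour at $\mu=\lambda$, where $H_0^f(\infty)$ has a $\nu_0(f)$-fold ground state, that contributes $-\nu_0(f)$ to $\Mor\Lambda_0^f(\lambda+\eps)$; if those were zero crossings from $+$ to $-$ as you state, they would contribute $+\nu_0(f)$ and yield $N(\lambda)+\nu_0(f)$. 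Relatedly, your bookkeeping sentence --- ``the number of eigenvalues of $\Gamma_0^f$ strictly below $\lambda$, the latter being $\nu_0(f)$ \dots but none below'' --- is self-contradictory: that number is $0$, and the quantity you need is the number of eigenvalues of $H_0^f(\infty)$ in $(-\infty,\lambda+\eps]$, which is $\nu_0(f)$. Finally, the argument needs the normalization $\Mor\Lambda_0^f(\mu)=0$ for $\mu\ll0$ (from $\bra{\Lambda_0^f(\mu)\xi,\xi}=\norm{(f^{\xi})'}^2-\mu\norm{f^{\xi}}^2>0$), which you use implicitly but never state.
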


Formula \eqref{eq:upper-nodal-def} provides a metric graph analogue 
of the higher-dimensional index formula from \cite{BerCoxMar_lmp19,MR3718436},
and our proof follows the idea of the proof of \cite[thm.~1.1]{BerCoxMar_lmp19}.
Equality \eqref{eq:nodal-def} provides a clarification of the known bound \cite{Ber_cmp08}
\[ n(\lambda)-\nu_{0}(f)\leq\beta(\Gamma). \] 
Since \eqref{eq:nodal-def} uses the positive index of the DtN map, 
it seems that it has no analogue in higher dimension.

\begin{figure}
\includegraphics[scale=0.5]{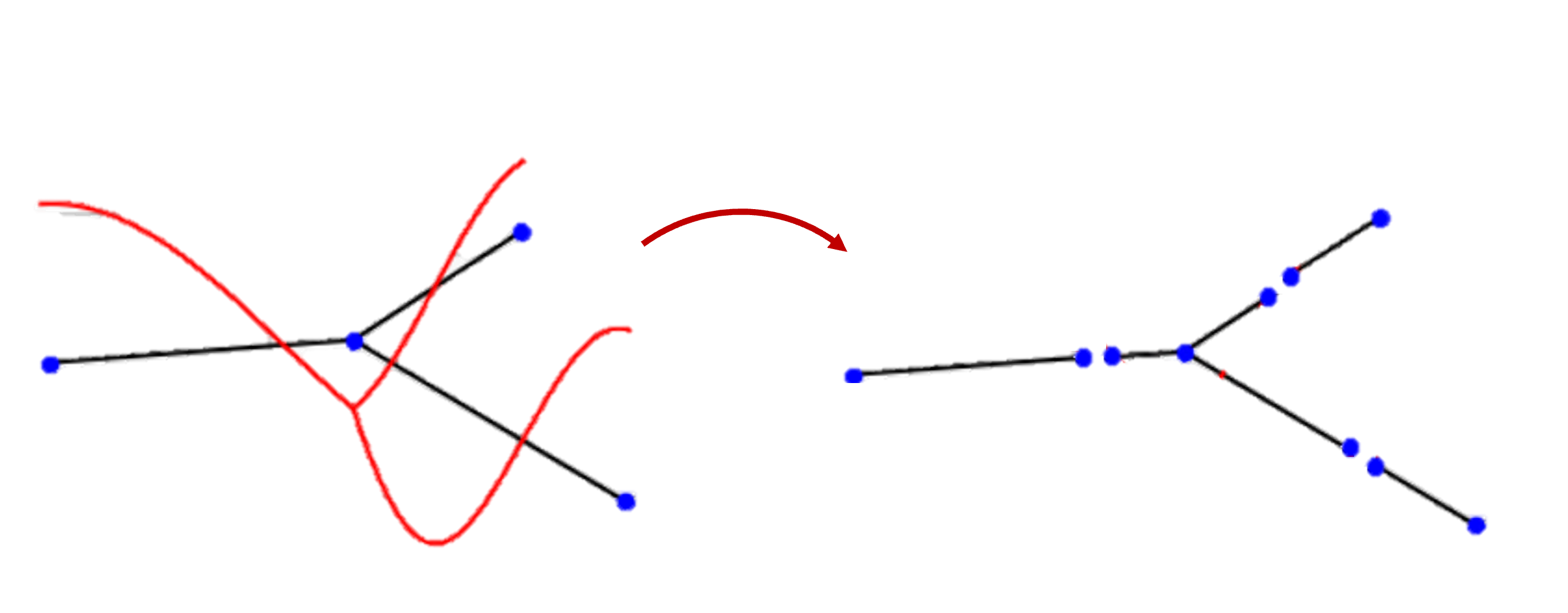}
\caption[Nodal domains.]
{\small{The eigenfunction $f_{4}$ of the Neumann-Kirchhoff Laplacian on
a star graph, partitioning it into $\nu_{0}\br{f_{4}}=4$ nodal domains.}
\label{fig:nodomains}}
\end{figure}

The non-vanishing assumption is common within the study of nodal domains 
and it is a generic one, see \cite{Alon_PhDThesis,MR4757017,BerLiu_jmaa17,Fri_ijm05}. 
We will often use a stronger assumption for an eigenpair:

\begin{assumption}\label{assu:generic}
Assume that $(\lambda,f)$ is an eigenpair of the Neumann-Kirchhoff Laplacian $H$ on $\Gamma$ 
such that $\lambda>(\pi/\lmin)^{2}$,
where $\lmin$ is the length of the shortest edge of the graph $\Gamma$,
and $f$ is real-valued and does not vanish at a vertex of degree larger than two.
\end{assumption}

Under this assumption, nodal domains have no cycles, so $\beta(\Gamma^f_0)=0$ 
and \eqref{eq:nodal-def} can be simplified.

\begin{cor}\label{cor:nodal-def-1} 
If $(\lambda,f)$ satisfies Assumption \ref{assu:generic}, then 
\begin{equation}\label{eq:nodal-def-1}
  n(\lambda)-\nu_{0}(f) = \beta(\Gamma)-\Pos\Lambda^{f}(\lambda-\eps)
\end{equation}
for $\eps>0$ small enough.
\end{cor}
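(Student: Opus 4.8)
The plan is to deduce Corollary~\ref{cor:nodal-def-1} directly from formula \eqref{eq:nodal-def} of Theorem~\ref{thm:nodal-def} by showing that the term $\beta(\Gamma_0^f)$ vanishes under Assumption~\ref{assu:generic}. Since $\Gamma_0^f$ is the disjoint union of the nodal domains of $f$, we have $\beta(\Gamma_0^f)=\sum_D \beta(D)$, the sum ranging over the connected components $D$ of $\Gamma_0^f$; so it suffices to prove that each nodal domain $D$ is a tree, i.e.\ $\beta(D)=0$.

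First I would set up the geometric picture: on each edge $e$ of $\Gamma$, the eigenfunction $f$ restricted to $e$ is a solution of $-f'' = \lambda f$, hence of the form $f_e(x) = A\cos(\sqrt{\lambda}\,x) + B\sin(\sqrt{\lambda}\,x)$ on $[0,\ell_e]$. Consecutive zeros of such a function are spaced exactly $\pi/\sqrt{\lambda}$ apart. Under Assumption~\ref{assu:generic} we have $\lambda > (\pi/\lmin)^2$, equivalently $\pi/\sqrt{\lambda} < \lmin \le \ell_e$ for every edge $e$. The key consequence I would extract is that $f$ has \emph{at least one} zero in the interior of every edge: if $f_e$ had no interior zero, then on the length-$\ell_e$ interval it would be a nonzero-sign solution of $-f''=\lambda f$, which is impossible once $\ell_e > \pi/\sqrt{\lambda}$ (a positive solution of $y'' = -\lambda y$ cannot survive past half a period). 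Hence every edge of $\Gamma$ is cut at least once when we pass to $\Gamma_0^f$.

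Next I would argue that this forces every nodal domain $D$ to be a tree. Each nodal domain $D$, viewed as a subgraph of the cut graph $\Gamma_0^f$, consists of pieces of edges of $\Gamma$: for each original edge $e$, the portion of $e$ lying in $D$ is a union of subintervals, each of which is bounded by a nodal point on at least one side, and in particular contains no full edge of $\Gamma$. Therefore any cycle inside $D$ would have to pass through a vertex $v$ of $\Gamma$ at which $f$ does \emph{not} vanish (otherwise the cycle would be severed at $v$) and would use at least two edges incident to that vertex; since $f(v)\ne 0$ and $f$ is continuous, by the non-vanishing-at-high-degree-vertices hypothesis $v$ has degree $2$, so $v$ lies in the interior of a ``smooth arc'' of two consecutive edges — but each such edge is cut in its interior by the argument of the previous paragraph, so this arc cannot close up into a cycle within $D$. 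Pushing this through (making the dichotomy ``a cycle in $D$ meets some $v$ with $f(v)\ne0$ vs.\ a cycle in $D$ meets no such $v$'' precise, and ruling out both cases using the interior-zero fact) yields $\beta(D)=0$ for every nodal domain $D$, hence $\beta(\Gamma_0^f)=0$.

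Finally, substituting $\beta(\Gamma_0^f)=0$ into \eqref{eq:nodal-def} gives
\[
  n(\lambda)-\nu_0(f) = \beta(\Gamma) - \Pos\Lambda_0^f(\lambda-\eps)
\]
for $\eps>0$ small enough, which — writing $\Lambda^f$ for $\Lambda_0^f$ as in the statement — is exactly \eqref{eq:nodal-def-1}. The main obstacle I anticipate is not the eigenvalue estimate (that part is a one-line Sturm comparison) but the careful combinatorial verification that ``every edge is cut at least once'' genuinely implies ``every nodal domain is a tree''; one has to handle degree-$2$ vertices where $f$ does not vanish (so that two edge-pieces are glued into one arc in $\Gamma_0^f$) and make sure that such gluings cannot conspire to create a cycle. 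This is where the combination of the two hypotheses in Assumption~\ref{assu:generic} — the spectral lower bound \emph{and} non-vanishing at vertices of degree $>2$ — is used in an essential way.
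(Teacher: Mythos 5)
Your proposal is correct and follows essentially the same route as the paper: the paper likewise observes that $\lambda>(\pi/\lmin)^2$ forces at least one nodal point in the interior of every edge, concludes that each nodal domain is an interval or a star graph (hence $\beta(\Gamma_0^f)=0$), and substitutes this into \eqref{eq:nodal-def}. The combinatorial step you flag as delicate can be closed in one line: since every edge is cut in its interior, every edge piece of $\Gamma_0^f$ has at least one endpoint of degree one, so no edge piece can lie on a cycle.
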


\subsection*{Mixed traces.}\label{subsec:traces}

For the rest of the paper, we fix an orientation on the graph, so that each edge is directed. 
Given a function $f\in H^{2}(\Gamma)$ and a point $x$ at an edge $e\in\E$, we denote by 
\[ \tau_{0}f(x)=f(x) \txt{and} \tau_{0}'f(x)=f'(x) \]
the value and derivative of $f$ at $x$, 
where the derivative $f'$ is taken \emph{according to the orientation of $e$}. 
We generalize these two traces introducing an angle parameter
\[ \alpha\in S^{1}=\brr{0,2\pi}/\left\{0,2\pi\right\}. \] 
Namely, we define the pair of \emph{mixed, or generalized, trace maps} $\tra$ and $\tra'$ 
as the rotation of the \q{usual} pair of Dirichlet and Neumann traces:
\begin{equation}\label{eq: mixed trace map def}
\br{\begin{array}{c}\tra \\ \tra'\end{array}}
\coloneqq T_{\alpha}\cdot\br{\begin{array}{c}\tau_{0}\\ \tau_{0}'\end{array}},
\text{ where } T_{\alpha} = 
\br{\begin{array}{cc}
\cos\alpha  & -\sin\alpha \\
\sin\alpha  & \cos\alpha 
\end{array}}
\end{equation}
is the matrix of the rotation of the plain by an angle $\alpha$.
In other words, 
\[
\tra f(x)=\cos(\alpha) f(x)-\sin(\alpha)f'(x) \txt{and} 
\tra'f(x)=\sin(\alpha)f(x)+\cos(\alpha)f'(x).
\]
For $\alpha=0$ this provides the standard Dirichlet and Neumann traces.
For $\alpha=\pi/2$ we obtain the negative Neumann trace and the Dirichlet trace: 
\[ \br{\tau_{\pi/2},\tau_{\pi/2}'}=\br{-\tau_{0}',\tau_{0}}. \]

\subsection*{Vertices of degree 2.}\label{subsec:deg-2}

From now on we assume that the orientation of the graph satisfies
the following condition (such an orientation always exists).

\begin{assumption*}
Each vertex of degree two has one edge directed towards it, and the
other edge directed away from it.
\end{assumption*}

\begin{figure}
\includegraphics[scale=0.65]{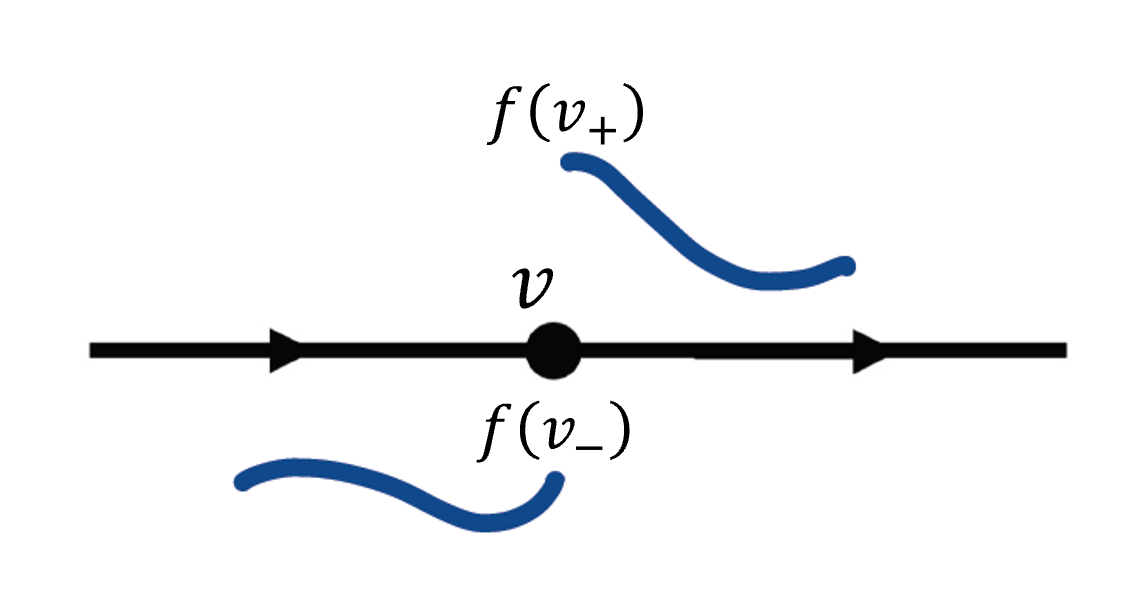}
\caption[Orientation on the edges.]
{\small{The orientation of edges at a vertex of degree two. 
The derivative $f'(\vm)$ is directed into the vertex, while $f'(\vp)$ is directed into the edge.}
\label{fig:degree-two}}
\end{figure}

Given a vertex $v$ of degree two and taking into account our assumption
for an orientation, we may refer to the values and derivatives of $f$ at each side of $v$ as $f(\vm)$, 
$f(\vp)$ and $f'(\vm)$, $f'(\vp)$, see Fig.~\ref{fig:degree-two}.
Hence, the mixed trace $\tra$ may be evaluated at both $\vm$ and $\vp$ 
and thus extends to the map $\tra^{v}\colon H^{2}(\Gamma)\to\C^{2}$,
and similarly for $\tra'$. 

Let $\B$ be a fixed subset of vertices of degree two or just internal points. 
Taking together the traces at points $v\in\B$, we obtain the mixed trace maps 
\begin{equation*}
\tra,\tra'\colon H^{2}(\Gamma)\to\C^{\B}\oplus\C^{\B}
\end{equation*}
(we do not mention $\B$ in the notation for the traces when $\B$ is fixed). 
Note that we take the same $\alpha$ parameter at all vertices of $\B$.

\subsection*{Robin points and Robin domains.}

We use mixed traces for generalizing the well known notions of nodal and Neumann points and domains.
Let $f\in H^{2}(\Gamma)$. 
We call an internal point $x$ of an edge $e\in\E$ an \emph{$\alpha$-Robin point} of $f$ if
\begin{equation*}
\tra f(x)=0, \txt{or equivalently} \sin(\alpha)f'(x)=\cos(\alpha)f(x)
\end{equation*}
(here the derivative $f'$ is taken with respect to the given orientation of $e$, as usual). 
The value $\alpha$ satisfying the formula above is known as the \emph{Prüfer angle} of $f$ at $x$.

Let $\Gamma$ be a finite graph equipped with the Neumann-Kirchhoff Laplacian $H$. 
Then vertices of degree two can be considered as internal points on longer edges; 
\emph{we extend our definition of an $\alpha$-Robin point to such \q{internal} vertices}. 
Conversely, throughout the paper, we tend to introduce additional vertices of degree two 
at $\alpha$-Robin points of a fixed eigenfunction $f$ (for a particular value of $\alpha$). 
If Neumann-Kirchhoff conditions are imposed at these additional
vertices of degree two, then there is no change in both the spectrum
and eigenfunctions of the operator. 

If $\alpha\neq0\mod\nicefrac{\pi}{2}$, then positions of Robin points,
and even their number, depend on the choice of an orientation of edges.
If one reverses the orientation of all graph edges and at the same
time replaces $\alpha$ by $\pi-\alpha$, then the corresponding Robin
points will stay the same.

Suppose that the set $\Pa(f)$ of $\alpha$-points is finite
and consider the graph $\Gaf\coloneqq\overline{\Gamma\setminus\Pa(f)}$
obtained by cutting $\Gamma$ at $\alpha$-Robin points of $f$
(that is, the closure of $\Gamma$ with $\alpha$-points removed). 

\begin{defn*}
The connected components of $\Gaf$ are called $\alpha$-\emph{Robin domains} of $f$ 
and their number is denoted by $\nua(f)$. 
The set of all $\alpha$-Robin points is denoted by $\Pa(f)$. 
\end{defn*}

An $\alpha$-Robin point is called a nodal point if $\alpha=0$ 
and a Neumann point if $\alpha=\nicefrac{\pi}{2}$. 
Similarly, the nodal and Neumann domains are Robin domains corresponding
to $\alpha=0$ and $\alpha=\nicefrac{\pi}{2}$ respectively.
See \cite{MR4314130,AloBanBerEgg_lms20} for Neumann domains on quantum graphs 
and \cite{Band2023,BanEggTay_jga20,BanFaj_ahp16,McDFul_ptrs13,Zel_sdg13} for Neumann domains in a broader context.

\subsection*{Robin map.}\label{subsec: Robin-map}

We will now define the (two-sided) \emph{$\alpha$-Robin map}, a generalization
of the two-sided Dirichlet-to-Neumann map \eqref{eq:DtN-def}. 

Let $\B\subset\V$ be a set of degree two vertices of $\Gamma$, $\alpha$ be an angle, and $\mu$ be a real number. 
The $\alpha$-Robin map $\LBa(\mu)$ is an operator on $\C^{\B}$ defined as follows. 
For every vector $\xi\in\C^{\B}$ consider the following Robin-type boundary value problem on $\Gamma$: 
\begin{align}
-d^{2}f/dx^{2}=\mu f & \quad\text{on }\Gamma,\nonumber \\
\quad\tra f(\vp)=\tra f(\vm)=\xi(v) & \quad\text{for } v\in\B, \label{eq:Robin-bdry}\\
f \text{ satisfies Neumann-Kirchhoff conditions} & \quad\text{at vertices }v\in\V\setminus\B. \nonumber 
\end{align}
Suppose that this boundary value problem has only trivial solution for $\xi=0$. 
Then it has a unique solution, which we denote by $f^{\xi}$, for every $\xi\in\C^{\B}$. 

\begin{defn*}
The $\alpha$-Robin map $\Lambda = \LBa(\mu): \C^{\B}\to\C^{\B}$ is defined by the formula 
\begin{equation}\label{eq:global-Robin}
(\Lambda\xi)(v) = \tra'f^{\xi}(\vm)-\tra'f^{\xi}(\vp) \text{ for } v\in\B.
\end{equation}
\end{defn*}

This map is self-adjoint, see Proposition \ref{prop:f-Lambda}.
For $\alpha=0$ the Robin map $\LB_{0}=\LB$ is just the usual DtN map. 
However, in contrast with the DtN map, we define the $\alpha$-Robin map 
only for a set of vertices of degree two.

Note that the $\alpha$-Robin map $\LBa$, as well as the sets of $\alpha$-Robin points and domains, 
depends only on $\alpha\mod\pi$. 
Therefore, angles $\alpha\in[0,\pi)$ are sufficient for our purposes. 
In the following, we will often \emph{omit mention of \q{modulo $\pi$}} 
while specifying values of $\alpha$, for the sake of brevity.

\begin{rem*}
1. There are other generalizations of the Dirichlet-to-Neumann map on manifolds 
to Robin boundary value problems, see e.g. \cite{MR2569392}.

2. The two-sided (\q{global}) Robin map described above can be constructed 
from similar one-sided (\q{local}) Robin maps. 
To do so, one may consider the connected components of $\Gamma\setminus\B$ 
and assign the local Robin map to each such subgraph. 
The global Robin map $\LBa(\lambda)$ is the signed sum of such local Robin maps.
For $\alpha=0$ the local Robin map is the one-sided Dirichlet-to-Neumann map, 
whereas for $\alpha=\nicefrac{\pi}{2}$ we get its inverse, 
the one-sided Neumann-to-Dirichlet map (up to a sign).
A similar approach for Dirichlet-to-Neumann maps on manifolds 
appears in \cite{BerCoxHelSun_jga22,BerCoxMar_lmp19,MR3718436}.
\end{rem*}

\subsection*{Robin deficiency formula.}\label{subsec: Robin-def}

Let $(\lambda,f)$ be an eigenpair of the Neumann-Kirchhoff Lap\-lacian $H$ and $\eps>0$ be small enough. 
Then the DtN operator $\Lambda_{0}^{f}(\lambda+\eps)$ is invertible, 
so the sum of its positive and negative indices is equal to the number $\Pof$ of nodal points. 
Hence the first part of Theorem \ref{thm:nodal-def} can be equivalently written as 
\begin{equation*}
 N(\lambda)-\nu_{0}(f)=\Pof-\Pos\br{\Lambda_{0}^{f}(\lambda+\eps)}.
\end{equation*}
Our next result generalizes this formula, replacing nodal domains
(which correspond to $\alpha=0$) by $\alpha$-Robin domains for an arbitrary value of $\alpha$. 
One may call $n(\lambda)-\nua(f)$ the \emph{$\alpha$-Robin deficiency} 
and $N(\lambda)-\nua(f)$ the \emph{upper $\alpha$-Robin deficiency} 
(for simple eigenvalues these two values coincide). Let 
\[ \La^{f}\coloneqq\La^{\Pa(f)} \]
be the $\alpha$-Robin map \eqref{eq:global-Robin} placed at the set $\B=\Pa(f)$ of $\alpha$-Robin points of $f$. 
Then $\La^{f}(\lambda+\eps)\colon\C^{\B}\to\C^{\B}$
is well defined and invertible for $|\eps|\neq0$ small enough. 

\begin{thm}\label{thm:Robin-def} 
Let $(\lambda,f)$ be an eigenpair of the Neumann-Kirchhoff Laplacian $H$ 
satisfying Assumption \ref{assu:generic} and $\alpha$ be a fixed angle. 
Then, for $\eps>0$ small enough,
\begin{equation}\label{eq:Robin-count-Pos}
N(\lambda)-\nua(f)=\Pof-\Pos\br{\La^{f}(\lambda+\eps)}.
\end{equation}
\end{thm}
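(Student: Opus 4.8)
The plan is to reduce Theorem~\ref{thm:Robin-def} to Theorem~\ref{thm:nodal-def} (in its equivalent form $N(\lambda)-\nu_0(f)=\Pof-\Pos(\Lambda_0^f(\lambda+\eps))$) by a \emph{rotation of the boundary data}. The key observation is that the notion of an $\alpha$-Robin point, the $\alpha$-Robin map, and the $\alpha$-Robin domains are all obtained from the nodal objects by conjugating the Dirichlet/Neumann trace pair with the rotation matrix $T_\alpha$ of \eqref{eq: mixed trace map def}. Concretely, I would first introduce new vertices of degree two at all $\alpha$-Robin points of $f$, so that $\B\coloneqq\Pa(f)$ becomes a set of degree-two vertices and $\Gaf$ is exactly the disjoint union of connected components of $\Gamma\setminus\B$; by the remark following Assumption~\ref{assu:generic} these are trees, so $\beta(\Gaf)=0$ and $\nua(f)=\abs{\B}-\beta(\Gamma)+\beta(\Gaf)+(\text{number of components})$ can be handled by the same Euler-characteristic bookkeeping used for the nodal case.

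The heart of the argument is to compare $\La^f(\mu)$ with $\Lambda^{\B}_0(\mu)$ near $\mu=\lambda$. Here I would \textbf{not} try to relate the two at the same energy; instead I would run the spectral-flow machinery directly for the family $\La^f(\mu)$, $\mu\in[\lambda-\delta,\lambda+\delta]$, mirroring the proof of Theorem~\ref{thm:nodal-def}. The three ingredients are: (1) \emph{self-adjointness and invertibility}: by Proposition~\ref{prop:f-Lambda} the operator $\La^f(\mu)$ is self-adjoint, and it is invertible for $0<\abs{\mu-\lambda}$ small since $\mu$ is not a Dirichlet-type eigenvalue of the cut graph with the mixed boundary condition $\tra f=0$ (this is where Assumption~\ref{assu:generic}, via $\lambda>(\pi/\lmin)^2$, guarantees $\lambda$ itself is not such an eigenvalue, so the boundary value problem \eqref{eq:Robin-bdry} is uniquely solvable for $\mu$ near $\lambda$ and $\xi=0$); (2) \emph{monotonicity}: the eigenvalues of $\La^f(\mu)$ are monotone in $\mu$ — this follows from the variational identity for $\La^f$ analogous to the DtN case, differentiating the associated quadratic form in $\mu$; (3) a \emph{crossing count}: the number of eigenvalues of $\La^f(\mu)$ that cross zero as $\mu$ passes through $\lambda$ equals $\Mult(\lambda)=N(\lambda)-n(\lambda)+1$, because the kernel of $\La^f(\lambda)$ (suitably interpreted as a limit, or via the secular determinant) records exactly those eigenfunctions of $H$ at energy $\lambda$ that, when restricted to the cut graph $\Gaf$, patch together with matching mixed traces — i.e. the global eigenfunctions. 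Combining monotonicity with the crossing count gives $\Mor\La^f(\lambda+\eps)-\Mor\La^f(\lambda-\eps)=\Mult(\lambda)$, and then $\Mor\La^f(\lambda-\eps)=n(\lambda)-1$ is obtained by an independent count (at $\mu$ below $\lambda$ and close to it, the negative eigenvalues of $\La^f$ correspond to the first $n(\lambda)-1$ eigenvalues of $H$ lying below $\lambda$), exactly as in \eqref{eq:n-1}. Finally, since $\La^f(\lambda+\eps)$ is invertible, $\Mor\La^f(\lambda+\eps)+\Pos\La^f(\lambda+\eps)=\dim\C^{\B}=\Pof$ (the number of $\alpha$-Robin points equals the number of nodal points under Assumption~\ref{assu:generic} — this is a separate lemma, essentially that on each edge the mixed trace of $f$ vanishes the same number of times as $f$ itself when $\lambda>(\pi/\lmin)^2$), and substituting $\Mor\La^f(\lambda+\eps)=\Mult(\lambda)+n(\lambda)-1=N(\lambda)$ yields $N(\lambda)-\nua(f)=\Pof-\Pos\La^f(\lambda+\eps)$ once the relation $\nua(f)=\Pof-\beta(\Gamma)+N(\lambda)-\Mor\La^f(\lambda+\eps)$... more directly, the bookkeeping $N(\lambda)-\nua(f)=\Mor\La^f(\lambda+\eps)-(\Pof-\nua(f))+ \ldots$ collapses to \eqref{eq:Robin-count-Pos} using $\nua(f)=\abs{\B}+1-\beta(\Gamma)$ for a tree decomposition.

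The main obstacle I anticipate is establishing ingredient (3) — that the zero-crossings of $\La^f(\mu)$ at $\mu=\lambda$ are counted with the correct multiplicity $\Mult(\lambda)$ — in the mixed-trace setting. For the ordinary DtN map one uses that $\ker\Lambda^{\B}_0(\lambda)$ corresponds to eigenfunctions vanishing on $\B$ that extend to global eigenfunctions; for $\alpha\neq 0$ one must instead argue with the mixed boundary condition $\tra f(\vp)=\tra f(\vm)$, and one has to rule out spurious kernel coming from solutions on the cut pieces that happen to satisfy $\tra f=0$ at a boundary vertex — again this is precisely excluded by $\lambda>(\pi/\lmin)^2$, since on an interval of length $<\pi/\sqrt\lambda$ a Helmholtz solution cannot satisfy $\tra f=0$ at both endpoints and be nontrivial. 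Making this exclusion quantitative and orientation-compatible (recall that for $\alpha\not\equiv 0,\pi/2$ the positions of Robin points depend on orientation, so one must check the degree-two orientation assumption interacts correctly with the two traces $\tra f(\vp)$ and $\tra f(\vm)$ appearing in \eqref{eq:Robin-bdry}) is the delicate point; everything else is a faithful transcription of the nodal proof with $T_\alpha$ inserted.
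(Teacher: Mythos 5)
There are genuine gaps here, and the central one is a miscount that propagates through the whole argument. For $\alpha\neq 0$ the restriction $f_i$ of $f$ to an $\alpha$-Robin domain $\Ga^i$ is \emph{not} a ground state of the decoupled operator $\Ha^i$: it has $\abs{\P_0(f_i)}$ interior zeros, so $\lambda$ sits at position $\nu_0(f_i)=\abs{\P_0(f_i)}+1$ in the spectrum of $\Ha^i$. Summing over domains gives $\Mor\br{\Ha^f(\infty)-\lambda}=\Pof\neq 0$, and every index identity from the nodal case picks up a shift by $\Pof$ --- this is precisely why $\Pof$ (and not $\Paf$) appears on the right-hand side of \eqref{eq:Robin-count-Pos}. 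Your claimed identities $\Mor\La^f(\lambda-\eps)=n(\lambda)-1$ and $\Mor\La^f(\lambda+\eps)=N(\lambda)$ are therefore false for $\alpha\neq0$; the correct values are $\Pos\La^f(\lambda-\eps)=\Pof-n(\lambda)+1$ and $\Mor\La^f(\lambda+\eps)=N(\lambda)-\nu_0(f)$, which is the content of Theorem~\ref{thm:Mor-Robin}. Likewise $\Paf\neq\Pof$ in general: only the combination $\Paf-\nua(f)=\beta(\Gamma)-1$ is independent of $\alpha$ (equation \eqref{eq:beta=P-nu+1}), and the difference $\Paf-\Pof=\nua(f)-\nu_0(f)$ is exactly the generally nonzero quantity studied in Section~\ref{sec:sf-DRN}. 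If you substitute your values into your own final bookkeeping you obtain $\nu_0(f)=0$, a contradiction, so the argument is internally inconsistent rather than merely incomplete.

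A second, independent error is the invertibility claim in your ingredient (1). The function $f$ itself satisfies $\tra f=0$ at every point of $\Pa(f)$ --- that is the definition of an $\alpha$-Robin point --- so $f$ is an eigenfunction of the cut operator $\Ha^f(\infty)$ with eigenvalue $\lambda$ (Proposition~\ref{lem:f-t}), and $\La^f(\lambda)$ is \emph{not} well defined. The family $\mu\mapsto\La^f(\mu)$ has a pole at $\mu=\lambda$ of total multiplicity $\nua(f)$, so a \q{monotonicity plus zero-crossing count} in the energy variable would also have to track eigenvalues escaping through $\pm\infty$; your proposed exclusion (that a Helmholtz solution cannot satisfy $\tra f=0$ at both endpoints of a short interval) fails because consecutive $\alpha$-Robin points of $f$ on an edge lie at distance exactly $\pi/\sqrt{\lambda}$, and $f$ restricted to the segment between them is precisely such a nontrivial solution. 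The paper sidesteps both problems by deforming the coupling parameter $t$ rather than the energy: it computes $\Mor\br{\Ha^f(\infty)-\lambda}=\Pof$ via Berkolaiko's nodal-count theorem applied on each (tree-like) Robin domain, and then converts the spectral flow of $\Ha^f(t)$, $t\in\io$, into $\Pos\br{\La^f(\lambda+\eps)}$ using Theorem~\ref{thm:sf-HBa-path} and Proposition~\ref{prop:sf=Mor-1}. Repairing your route would require exactly this input on the spectral position of $\lambda$ within each Robin domain, at which point you have reproduced the paper's proof.
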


\subsection*{Stability of indices.}

The parameter $\alpha$ comes into $\La^{f}=\La^{\Pa(f)}$ in two ways: 
first, it determines the set of $\alpha$-Robin points $\Pa(f)$; 
second, it provides the mixed traces which determine the Robin map at $\Pa(f)$. 
There are critical values of $\alpha$ where the number of $\alpha$-Robin points changes 
(that is, one or several of $\alpha$-points moves into a vertex of degree $\neq2$), 
which leads to the change of the size of the matrix $\La^{f}(\lambda+\eps)$.
The next theorem shows that the negative index of $\La^{f}(\lambda+\eps)$
always remains the same and only its positive index changes. 
The second part of the theorem provides a similar result for $\lambda-\eps$.

\begin{thm}\label{thm:Mor-Robin} 
Let $(\lambda,f)$ be an eigenpair of the Neumann-Kirchhoff Laplacian $H$ 
satisfying Assumption \ref{assu:generic}. 
Then, for $\eps>0$ small enough, the following indices are independent of $\alpha$:
\begin{align*}
\Mor\br{\La^{f}(\lambda+\eps)} &= \Mor\br{\Lambda_{0}^{f}(\lambda+\eps)} = N(\lambda)-\nu_{0}(f),
\\
\Pos\br{\La^{f}(\lambda-\eps)} &= \Pos\br{\Lambda_{0}^{f}(\lambda-\eps)} = \Pof-n(\lambda)+1
\end{align*}
(the notion of \q{$\eps$ small enough} may depend on $\alpha$).
\end{thm}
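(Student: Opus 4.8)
The plan is to reduce Theorem~\ref{thm:Mor-Robin} to a spectral flow computation, using Theorem~\ref{thm:Robin-def} (and its $\lambda-\eps$ counterpart, which follows by the same method as Theorem~\ref{thm:nodal-def}) together with a continuity/homotopy argument in the parameter $\alpha$. The first observation is that the right-hand sides in both claimed equalities do not involve $\alpha$ at all: $N(\lambda)-\nu_0(f)$, $n(\lambda)$, and $\Pof$ are intrinsic to the eigenpair $(\lambda,f)$. Hence the content of the theorem is precisely the $\alpha$-independence of $\Mor\br{\La^f(\lambda+\eps)}$ and of $\Pos\br{\La^f(\lambda-\eps)}$, and once established, the identification with $N(\lambda)-\nu_0(f)$ (resp.\ $\Pof-n(\lambda)+1$) is read off by setting $\alpha=0$ and invoking Theorem~\ref{thm:nodal-def}. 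So I would split the proof into: (i) the case $\alpha=0$ is Theorem~\ref{thm:nodal-def}; (ii) $\Mor\br{\La^f(\lambda+\eps)}$ is locally constant in $\alpha$; (iii) $\Pos\br{\La^f(\lambda-\eps)}$ is locally constant in $\alpha$. Since $[0,\pi)$ is connected and, for generic $\alpha$, the number of $\alpha$-Robin points is locally constant, local constancy plus a careful analysis at the finitely many critical values of $\alpha$ gives global constancy.

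For step (ii), fix a value $\alpha_0$ and vary $\alpha$ in a small interval. Away from critical values the set $\Pa(f)$ has constant cardinality $k$ and its points depend continuously (indeed real-analytically) on $\alpha$, so $\La^f(\lambda+\eps)$ is a continuous path of self-adjoint $k\times k$ matrices; since, for $\eps>0$ small enough (uniformly on the interval), this path stays invertible, its Morse index is constant there. The real work is crossing a critical $\alpha_*$: here one or more $\alpha$-Robin points merge into a vertex $v$ of degree $\ge 3$ (or $=1$), the matrix changes size, and $\La^f(\lambda+\eps)$ cannot be invertible exactly at $\alpha_*$ for the original configuration because the boundary value problem degenerates. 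The strategy is to pass to the \emph{one-sided} (local) Robin maps described in Remark~2 after the definition of the Robin map: on each connected component of $\Gamma\setminus\Pa(f)$ the local Robin map is, up to sign, a Dirichlet-to-Neumann-type operator at the boundary points, and the global map is their signed sum. As $\alpha\to\alpha_*$ a boundary point of two adjacent components approaches a common vertex $v$; the combined local contribution of those components near $v$ converges, after the appropriate change of the quadratic form, to the local Robin contribution of the merged configuration, with a Morse index that does not jump. The clean way to phrase this is via the associated quadratic forms and the monotonicity of Robin boundary conditions (a boundary point entering/leaving a vertex corresponds to adding/removing a rank-one perturbation of controlled sign); the key point is that the \emph{negative} eigenvalues are protected while only \emph{positive} eigenvalues can be created or destroyed as the matrix grows or shrinks. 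This is exactly the asymmetry already visible in Theorem~\ref{thm:nodal-def} between $\Mor$ and $\Pos$.

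For step (iii), the argument is the mirror image, now at energy $\lambda-\eps$: the local Robin contributions are again DtN-type, but the role of positive and negative eigenvalues is interchanged (as one sees by comparing \eqref{eq:upper-nodal-def} with \eqref{eq:n-1}, and via the identity $\La^f_{\pi/2}$ being essentially the inverse of $\La^f_0$ up to sign). So here it is $\Pos\br{\La^f(\lambda-\eps)}$ that is shielded across critical values while the Morse index may change. Concretely I would use the relation $\La^\B_{\alpha+\pi/2}(\mu)=-\big(\La^\B_{\alpha}(\mu)\big)^{-1}$-type duality on each one-sided piece (valid when the relevant local operators are invertible, i.e.\ for $|\eps|$ small) to transfer the conclusions of step (ii) to step (iii); alternatively, one reruns the rank-one perturbation analysis with the opposite sign. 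The identification with $\Pof-n(\lambda)+1$ then comes from \eqref{eq:n-1}: for $\alpha=0$ the operator $\Lambda_0^f(\lambda-\eps)$ is invertible of size $\Pof$, so $\Pos=\Pof-\Mor=\Pof-(n(\lambda)-1)$.

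The main obstacle is the behaviour at the critical values of $\alpha$, i.e.\ making rigorous the claim that merging an $\alpha$-Robin point into a higher-degree vertex changes the size of $\La^f(\lambda\pm\eps)$ but shifts only the positive (resp.\ negative) index. I expect to handle this by: (a) choosing, near a critical $\alpha_*$, a splitting of $\Gamma$ that isolates the offending vertex $v$ together with the short edge-pieces carrying the merging Robin points; (b) writing $\La^f(\lambda\pm\eps)$ as a direct sum of the ``far'' part (continuous and invertible through $\alpha_*$) and a ``local near $v$'' block; (c) analysing that local block by hand as a small explicit matrix (built from $\sin$, $\cos$, $\sqrt{\lambda\pm\eps}$ and the short lengths) and showing its Morse index is continuous through $\alpha_*$ while its size jumps by the number of merging points and its positive index jumps accordingly — this being a one-dimensional Sturm-type computation. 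The only subtlety is uniformity of ``$\eps$ small enough'': the threshold on $\eps$ genuinely depends on $\alpha$ near a critical value (as the theorem's parenthetical remark warns), but on each compact subinterval avoiding critical values one has a uniform threshold, and the crossing analysis in (b)--(c) is carried out at fixed $\eps$ chosen after fixing a neighbourhood of $\alpha_*$, so the pieces glue.
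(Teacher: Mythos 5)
Your overall strategy is genuinely different from the paper's, and it contains a gap at exactly the point you yourself identify as ``the real work.'' The paper does not deform in $\alpha$ at all: for each fixed $\alpha$ it computes $\Mor\br{\Ha^f(\infty)-(\lambda\pm\eps)}$ directly, using the fact that under Assumption \ref{assu:generic} every $\alpha$-Robin domain is an interval or a star graph, so that Berkolaiko's nodal count theorem for trees gives $\Mor\br{\Ha^{i}-\lambda}=\abs{\P_0(f_i)}$ on each domain; summing, applying Proposition \ref{prop:sf=Mor-1} on the interval $\io$ (where the family is uniformly bounded below for $\alpha\neq0$), and converting spectral flow into $\Pos\br{\La^f(\lambda\pm\eps)}$ via Theorem \ref{thm:sf-HBa-path} yields both lines; the $\alpha$-independence then falls out of the combinatorial identity $\Paf-\nua(f)=\Pof-\nu_0(f)=\beta(\Gamma)-1$. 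No crossing analysis in $\alpha$ is ever needed.

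The gap in your route: the behaviour at a critical $\alpha_*$ is asserted, not proved. The claim that ``the negative eigenvalues are protected while only positive eigenvalues can be created or destroyed'' as the matrix changes size \emph{is} the statement of the theorem at the critical values, so it cannot be taken as the key point of the argument without an independent proof. The mechanism you propose for proving it does not work as stated: $\La^{f}(\lambda\pm\eps)$ is \emph{not} a direct sum of a ``far'' block and a ``local near $v$'' block --- it is a full matrix coupling all Robin points through solutions of the boundary value problem on the whole graph, so at best one could attempt a Schur complement/inertia argument, which is not sketched. There is also a circularity risk in your reduction: the ``$\lambda-\eps$ counterpart of Theorem \ref{thm:Robin-def}'' that you invoke for step (iii) is precisely the second line of Theorem \ref{thm:Mor-Robin}, and the duality $\La^{\B}_{\alpha+\pi/2}(\mu)=-\br{\La^{\B}_{\alpha}(\mu)}^{-1}$ you appeal to compares two Robin maps on the \emph{same} set $\B$, whereas $\P_{\alpha}(f)$ and $\P_{\alpha+\pi/2}(f)$ are different sets, so it does not transfer step (ii) to step (iii). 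Finally, note that if you do grant Theorem \ref{thm:Robin-def}, the first line of Theorem \ref{thm:Mor-Robin} already follows by pure arithmetic from $\Mor+\Pos=\Paf$ and $\Paf-\nua(f)=\Pof-\nu_0(f)$, with no homotopy in $\alpha$ --- which suggests the deformation argument is solving a harder problem than necessary.
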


We prove this theorem and other results stated above in Section \ref{sec:Proof-of-index}.

\subsection*{Generalization of the delta vertex condition.}\label{subsec:delta-s}

Our proof of Theorem~\ref{thm:nodal-def} is based on the well known
loop of vertex boundary conditions, so called $\delta(t)$ vertex condition 
(it is also called Robin condition sometimes). 
This condition at a vertex $v$ is described as follows: $f$ is continuous at $v$
and the sum of outgoing derivatives of $f$ at $v$ is equal to $tf(v)$.
The \q{coupling parameter} $t$ is either a real number or infinity, that is, 
an element of the one-point compactification
\[ \RR\coloneqq\mathbb{R}\cup\infty \cong S^{1} \] 
of $\mathbb{R}$.
The value $t=\infty$ corresponds to the usual Dirichlet condition $f(v)=0$.
We now introduce a generalization of the delta condition,
which plays a key role in our proof of Theorems~\ref{thm:Robin-def} and \ref{thm:Mor-Robin}. 

\begin{defn*}
Fix an angle $\alpha$ and $t\in\RR$. Let $v\in\V$ be a degree two vertex.
The $\del(t)$ vertex condition at $v$ is given by formulas
\begin{align}
 & \tra\fvp = \tra\fvm, \label{eq:delta} \\
 & \tra'\fvp-\tra'\fvm = t\cdot\tra\fvm. \label{eq:delta'}
\end{align}
In the case $t=\infty$ these conditions are interpreted as 
\begin{equation}\label{eq: delta-inf}
\tra\fvp = \tra\fvm = 0, 
\end{equation}
that is, we cut the graph $\Gamma$ at $v$, creating two new vertices $\vm$ and $\vp$ in place of $v$, 
and impose the Robin condition $\tra f\br{\vpm}=0$ at the new vertices $\vm$ and $\vp$.
\end{defn*}

The $\del(t)$ condition depends on $\alpha\mod\pi$ 
(if $\alpha$ is replaced by $\alpha+\pi$, then both $\tra$ and $\tra'$ change sign, 
but such a simultaneous change of sign does not affect the boundary condition) 
and on the choice of orientation (since $\tra,\tra'$ depend on the edge orientation). 
For $\alpha=0$, the $\delta_{0}(t)$ condition coincides with the $\delta(t)$ condition described above. 
For $\alpha=\pi/2$, the $\delta_{\pi/2}(-t)$ condition is known as the non-symmetric
$\delta'$ (\q{delta-prime}) type condition\footnote{Since $\tau_{\pi/2}$ is the negative Neumann trace, 
we get $\delta'$ with the reversed sign of the parameter $t$.}, 
as presented in \cite{AlbeverioGesztesy_solvable,AvrExnLas_prl94,Exn_prl95}. 

For a subset $\B\subset\V$ of vertices of degree two, we define the operator $\HBa(t)$ 
as the Laplacian on $\Gamma$ equipped with the $\delta_{\alpha}(t)$ condition at every vertex $v\in\B$ 
and the Neumann-Kirchhoff conditions at all other vertices. 
For $t=0$ this provides the Neumann-Kirchhoff Laplacian $H=\HBa(0)$ regardless of the $\alpha$ value.
The family $\HBa(t)$ is continuous in the norm resolvent sense, 
see Section~\ref{subsec:Continuity-properties}, 
so we obtain a deformation of the original operator $H$ to the \q{decoupled} operator $\HBa(\infty)$.
This allows us to compare the spectral counts of these two operators.

\subsection*{Spectral flow.}\label{subsec:spectral_flow_introduction}

Another tool in our proof of theorems above is the spectral flow,  
which was first introduced by Atiyah, Patodi, and Singer in \cite{atiyah1976spectral}. 
Roughly speaking, the spectral flow counts (with signs) the number of eigenvalues passing through
zero from the start of the path to its end, see Fig.~\ref{fig:SF}. 
The spectral flow has been used for proving index theorems 
in a variety of different settings over past years, 
and was first introduced for quantum graphs in \cite{LatSuk_ams20}. 

The Schr\"odinger operator on a finite graph has compact resolvent, so every shift of it is Fredholm. 
For such operators, one can consider the spectral flow through an arbitrary spectral level $\mu\in\R$,
counting the number of eigenvalues passing through $\mu$; we will denote it $\spf_{\mu}$.
The spectral flow of a loop of operators does not depend on the choice of $\mu$: 
it counts the number of positions the spectrum is shifted by when the parameter runs the loop. 
In this case, we usually omit the subscript $\mu$ and write $\spf$ instead of $\spf_{\mu}$. 
But if the ends of a path of operators do not coincide, then the choice of $\mu$ matters.

In Section \ref{sec:SF} we recall the definition of the spectral flow and some of its basic properties. 
For a detailed exposition see \cite{booss2005unbounded,lesch2005uniqueness} 
or a recent book \cite{doll2023spectral}. 
See also the paper \cite{prokhorova_quantum} of the second author,
about the spectral flow in finite quantum graphs and some applications, 
including properties of $\HBa(t)$ families and of their generalizations.

\begin{figure}
\includegraphics[scale=0.7]{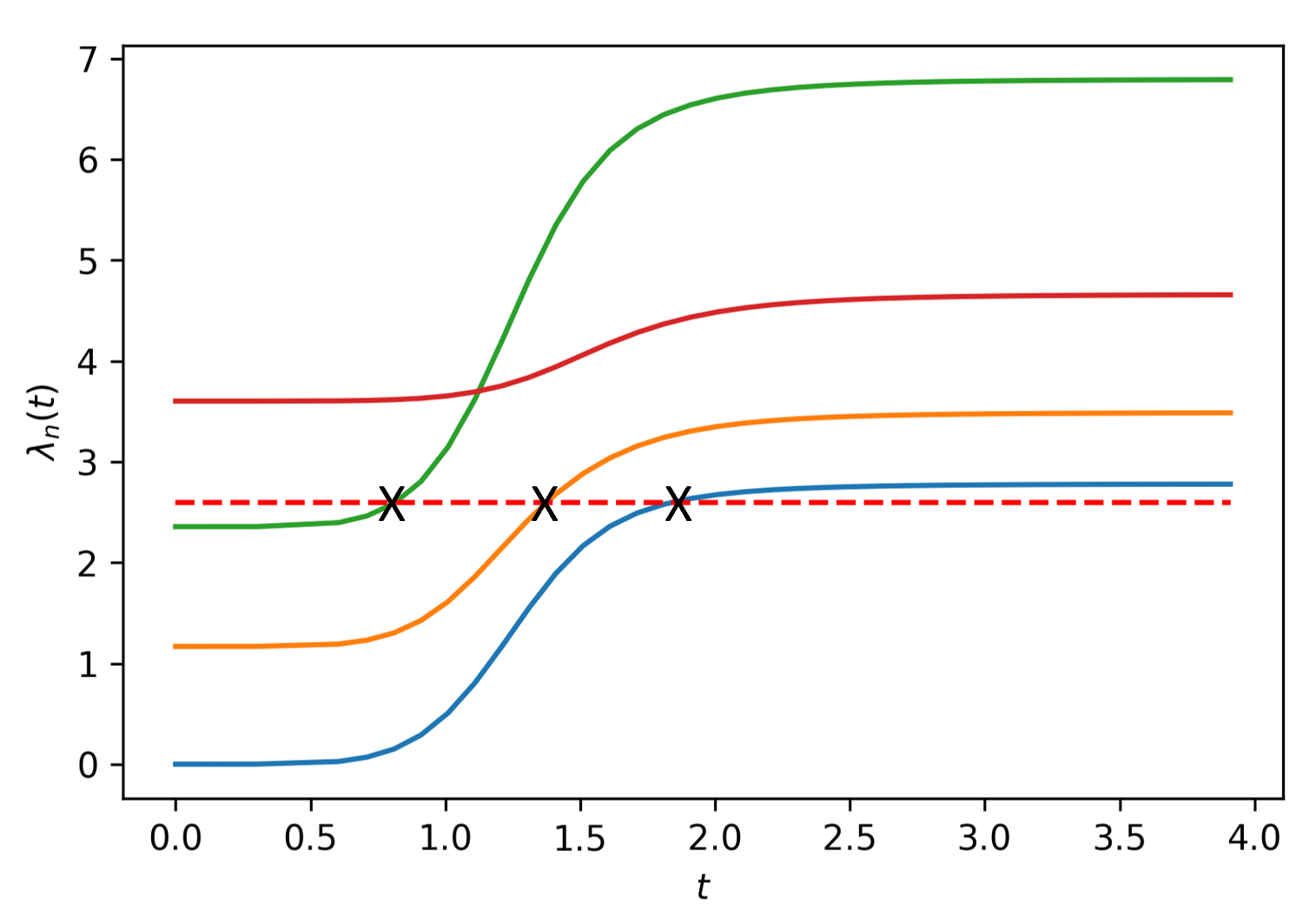}
\caption[Spectral flow.]
{\small{Illustration of the spectral flow through a given horizontal cross-section.
There are three positive intersections of the dashed line by spectral curves, 
accounting for the spectral flow through $\lambda$ equal to $3$.}
\label{fig:SF}}
\end{figure}

\subsection*{Spectral flow in terms of the Robin map.}\label{subsec:spectral_flow_Robin}

In this paper we deal only with very special families of operators, namely $\HBa(t)$, 
with $\alpha$ and $\B$ fixed and $t$ running either in an interval or in the extended real line $\RR$ 
(in the last case, the family $\HBa(t)$ is a loop).
Each spectral curve of such a family is either strictly increasing or constant, 
see Proposition~\ref{prop:monotonicity}. 
Therefore, the spectral flow $\spf_{\mu}$ of this family along any subinterval $[a,b]$ of $\RR$ 
is equal to the total number of intersections of the spectral level $\mu$ 
by non-constant spectral curves as $t$ runs $(a,b]$.
Such intersections are in one-to-one correspondence with eigenvalues of $\LBa(\mu)$,
see Proposition~\ref{prop:DTN-correspondence}.
This provides the following result,
which is one of the key ingredients in the proof of our index theorems. 

\begin{thm}\label{thm:sf-HBa-path} 
Let $H$ be the Neumann-Kirchhoff Laplacian on $\Gamma$ 
and $\B\subset\V$ be a set of degree two vertices. Then
\begin{equation}\label{eq:sf=B}
\spf\br{\HBa(t)}_{t\in\RR}=|\B|
\end{equation}
for every angle $\alpha$. 
More precisely, suppose that $\mu\in\R$ lies outside of the spectrum of both $H$ and $\HBa(\infty)$,
so that the Robin operator $\LBa(\mu)$ is well defined and invertible. Then
\begin{align}
& \spf_{\mu}\br{\HBa(t)}_{t\in\oi} = \Mor\br{\LBa(\mu)},\label{eq:sf=Mor} \\
& \spf_{\mu}\br{\HBa(t)}_{t\in\io} = \Pos\br{\LBa(\mu)}.\label{eq:sf=Pos}
\end{align}
\end{thm}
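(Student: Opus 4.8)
The plan is to prove \eqref{eq:sf=B} as a direct consequence of the monotonicity structure of the family $\HBa(t)$, and then to refine it into \eqref{eq:sf=Mor} and \eqref{eq:sf=Pos} by pairing spectral-flow intersections with eigenvalues of the Robin map. First I would recall from Proposition~\ref{prop:monotonicity} that every spectral curve $\lambda_k(t)$ of the loop $\HBa(t)$, $t\in\RR$, is either constant in $t$ or strictly increasing; since the loop is a continuous norm-resolvent family over the circle $\RR\cong S^1$, the spectral flow $\spf$ is independent of the reference level $\mu$ and equals the number of positions the whole spectrum is shifted up when $t$ runs once around. The key quantitative input is that along the full loop the operator is conjugate, after the decoupling at $t=\infty$ and reattachment, to itself but with $|\B|$ new Robin interactions inserted: running $t$ from $0$ through $\infty$ and back to $0$ adds exactly one unit of monotone shift per vertex of $\B$ (each $\del(t)$ condition contributes a rank-one monotone perturbation whose parameter sweeps all of $\RR$), so $\spf(\HBa(t))_{t\in\RR}=|\B|$. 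This can be phrased cleanly by summing over $v\in\B$ using the local (one-sided) Robin maps from the second Remark after the definition of $\LBa$, together with additivity of the spectral flow over a decomposition of the loop.

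Next I would establish \eqref{eq:sf=Mor}. Fix $\mu$ outside the spectrum of $H=\HBa(0)$ and of $\HBa(\infty)$. By the monotonicity property, $\spf_\mu(\HBa(t))_{t\in\oi}$ counts, with sign $+1$ (all crossings are upward), the number of non-constant spectral curves that cross the level $\mu$ as $t$ runs over $(0,\infty]$. The plan is to invoke Proposition~\ref{prop:DTN-correspondence}, which I expect to say: a spectral curve of $\HBa(t)$ crosses the level $\mu$ at parameter value $t=t_0$ precisely when $t_0$ is an eigenvalue of $\LBa(\mu)$, with matching multiplicities. Hence the number of such crossings for $t_0\in(0,\infty)$ equals the number of positive eigenvalues of $\LBa(\mu)$, i.e.\ $\Pos(\LBa(\mu))$ — wait, one must be careful with the sign convention: the $\del(t)$ condition \eqref{eq:delta'} has $tf$ on the right, and as $t$ increases the operator increases, so eigenvalues of $\HBa(t)$ move up; a curve starting below $\mu$ at $t=0$ and crossing it does so at a \emph{positive} $t_0$ only if the corresponding Robin eigenvalue is positive. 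However the statement asserts $\Mor$, so the correct reading is that crossings over $t\in[0,\infty)$, once the sign in \eqref{eq:Robin-bdry}--\eqref{eq:global-Robin} is taken into account (the Robin map is built from $\tra' f(\vm)-\tra' f(\vp)$, the opposite order from \eqref{eq:delta'}), correspond to \emph{negative} eigenvalues of $\LBa(\mu)$. So \eqref{eq:sf=Mor} follows: $\spf_\mu(\HBa(t))_{t\in\oi}=\#\{\text{negative eigenvalues of }\LBa(\mu)\}=\Mor(\LBa(\mu))$.

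The formula \eqref{eq:sf=Pos} is then obtained symmetrically, running $t$ over the complementary arc $\io$ of the circle $\RR$, from $0$ down through $-\infty$ to $\infty$: the crossings now correspond to the positive eigenvalues of $\LBa(\mu)$, giving $\spf_\mu(\HBa(t))_{t\in\io}=\Pos(\LBa(\mu))$. As a consistency check, the two arcs $\oi$ and $\io$ together cover the whole loop $\RR$, and additivity of spectral flow gives $\spf(\HBa(t))_{t\in\RR}=\Mor(\LBa(\mu))+\Pos(\LBa(\mu))$; since $\LBa(\mu)$ is invertible of size $|\B|$, this sum is exactly $|\B|$, recovering \eqref{eq:sf=B} and confirming the bookkeeping.

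The step I expect to be the main obstacle is the precise correspondence in Proposition~\ref{prop:DTN-correspondence} between crossing parameters and Robin eigenvalues, including getting all the signs right: one must verify that a solution $f$ of $-f''=\mu f$ satisfying Neumann-Kirchhoff away from $\B$ and the $\del(t_0)$ condition at every $v\in\B$ is the same datum as an eigenvector $\xi$ of $\LBa(\mu)$ with eigenvalue $t_0$ (via $\xi(v)=\tra f(\vm)$ and $\LBa(\mu)\xi=t_0\xi$), and that multiplicities match — in particular that constant spectral curves correspond to the excluded case where the boundary value problem \eqref{eq:Robin-bdry} has a nontrivial solution for $\xi=0$, which is ruled out by $\mu\notin\Spec(\HBa(\infty))$. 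Handling the passage through $t=\infty$ (the Dirichlet/decoupled limit) carefully, so that the loop is genuinely continuous there and the crossing at $t=\infty$ is counted on exactly one of the two arcs, is the other delicate point; this is where the norm-resolvent continuity from Section~\ref{subsec:Continuity-properties} and homotopy invariance of the spectral flow do the work.
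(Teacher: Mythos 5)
Your proposal is correct and follows essentially the same route as the paper: monotonicity of the spectral curves (Proposition~\ref{prop:monotonicity}) reduces each spectral flow to a count of upward crossings, and Proposition~\ref{prop:DTN-correspondence} identifies a crossing at parameter $t_0$ with the eigenvalue $-t_0$ of $\LBa(\mu)$, so that $t_0\in(0,\infty)$ gives $\Mor$ and $t_0\in(-\infty,0)$ gives $\Pos$ — you resolve the sign issue correctly. The only weak spot is your first-paragraph argument for \eqref{eq:sf=B} (``each $\del(t)$ condition contributes a rank-one monotone perturbation whose parameter sweeps all of $\RR$, hence one unit of shift per vertex''), which as stated is an assertion rather than a proof; but your ``consistency check'' at the end — additivity over the two arcs plus the fact that the self-adjoint invertible matrix $\LBa(\mu)$ has exactly $|\B|$ real eigenvalues — is precisely the paper's actual proof of \eqref{eq:sf=B}, so nothing is genuinely missing.
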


In these formulas, the values $t=\infty$ and $t=-\infty$ correspond to the same point of the circle $\RR$; 
we use the notation $\io$ in order to make clear in which path $t$ runs. 

We prove this theorem in Section \ref{sec:Robin-map}.
An alternative proof of equality \eqref{eq:sf=B} is presented in the appendix,
where we show that the spectral flow of a loop of Hamiltonians 
is equal to the winding number of the corresponding loop of boundary conditions
(Theorem \ref{thm:sf-wind-loop}) and then apply this general result to the $\HBa$-loop.

\begin{rem*}
One may deduce from Theorem~\ref{thm:sf-HBa-path} an eigenvalue bracketing (interlacing) result
similar to \cite[thm.~3.1.8]{BerKuc_graphs}. 
Impose Neumann-Kirchhoff conditions at all graph vertices except some set $\B$ of vertices of degree two. 
Then the ordered eigenvalues $\lambda_{n}(t)$ of the operators $H_{0}^{\B}(t)$ satisfy the inequality 
\begin{equation*}
\lambda_{n}(s)\leq\lambda_{n}\br{t}\leq\lambda_{n+|\B|}(s) \quad \text{for } -\infty<s<t\leq+\infty.
\end{equation*}
Similarly, if $\lambda_{n}(t)$ are the ordered eigenvalues of the operator $\HBa(t)$ 
for $\alpha\neq 0\mod\pi$, then 
\begin{equation*}
\lambda_{n}(s)\leq\lambda_{n}\br{t}\leq\lambda_{n+|\B|}(s) \quad 
  \text{for } -\infty < -s\inv < -t\inv \leq +\infty.
\end{equation*}
(Note that the special role of the point $t=\infty\in\RR=\mathbb{R}\cup\infty$ for $\alpha =0$ 
is replaced by $t=0\in\RR$ for $\alpha\neq 0$.) 
Proving such statements boils down to using Theorem~\ref{thm:sf-HBa-path} 
and monotonicity of spectral curves in order to show their spiral-like behavior
(as is nicely depicted in \cite[sec.~3.1.7, fig.~2]{BerKuc_graphs}).
In fact, the same result holds for arbitrarily fixed boundary conditions at vertices $v\notin\B$, 
and by the same reason.
\end{rem*}

\subsection*{Deformation at Robin points.} 
In our proof of Theorems~\ref{thm:nodal-def}, \ref{thm:Robin-def}, and \ref{thm:Mor-Robin} 
we place $\delta_\alpha(t)$ conditions at the set $\B=\Pa(f)$ of $\alpha$-Robin points of $f$
and leave the conditions at other points unchanged.
This provides the one-parameter family  
\[ \Ha^{f}(t)\coloneqq \Ha^{\Pa(f)}(t), \]
connecting the original operator $H=\Ha^{f}\br{0}$ with the \q{decoupled} operator $\Ha^{f}(\infty)$. 
The operator $\Ha^f(\infty)$ is the Laplacian on the disjoint union 
of $\alpha$-Robin domains of $f$,
with $\alpha$-Robin conditions at each side of $\alpha$-Robin points $v\in\Pa(f)$
and Neumann-Kirchhoff conditions at all the other (\q{internal}) vertices.
For $\alpha=0$ this corresponds to nodal domains and Dirichlet conditions at nodal points,
and for $\alpha=\nicefrac{\pi}{2}$ to Neumann domains and Neumann conditions at Neumann points.

Using the second part of Theorem~\ref{thm:sf-HBa-path},
we can reformulate Theorem \ref{thm:Mor-Robin} in terms of this family as follows.

\begin{thm}\label{thm:sf=sf} 
Let $(\lambda,f)$ be an eigenpair of $H$ satisfying  Assumption \ref{assu:generic} 
and $\eps>0$ be small enough. Then the following spectral flows are independent of $\alpha$:
\begin{align*}
& \spf_{\lambda+\eps}\br{\Ha^f(t)}_{t\in\oi} = N(\lambda)-\nu_{0}(f), 
\\
& \spf_{\lambda-\eps}\br{\Ha^f(t)}_{t\in\io} = \Pof-n(\lambda)+1.
\end{align*}
\end{thm}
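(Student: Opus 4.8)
\textbf{Proof plan for Theorem~\ref{thm:sf=sf}.}
The plan is to translate the statement entirely into the language of Robin maps via the second part of Theorem~\ref{thm:sf-HBa-path}, and then invoke Theorem~\ref{thm:Mor-Robin}. Concretely, apply \eqref{eq:sf=Mor} and \eqref{eq:sf=Pos} with $\B=\Pa(f)$ and $\mu=\lambda\pm\eps$. For the first line, \eqref{eq:sf=Mor} gives $\spf_{\lambda+\eps}\br{\Ha^f(t)}_{t\in\oi}=\Mor\br{\La^f(\lambda+\eps)}$; for the second line, \eqref{eq:sf=Pos} gives $\spf_{\lambda-\eps}\br{\Ha^f(t)}_{t\in\io}=\Pos\br{\La^f(\lambda-\eps)}$. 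Theorem~\ref{thm:Mor-Robin} then identifies these with $N(\lambda)-\nu_0(f)$ and $\Pof-n(\lambda)+1$ respectively, both of which are manifestly independent of $\alpha$, which is exactly the assertion. So at the level of bookkeeping the proof is a one-line deduction from two previously established results.

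The one genuine verification needed is that the hypotheses of Theorem~\ref{thm:sf-HBa-path} are met for this choice of data, that is, that $\mu=\lambda\pm\eps$ lies outside the spectrum of both $H$ and $\Ha^f(\infty)$ so that $\La^f(\lambda\pm\eps)$ is well defined and invertible. For $H$ this is immediate: $\lambda$ is an eigenvalue of $H$, it is isolated in the discrete spectrum, so for $\eps>0$ small enough $\lambda\pm\eps\notin\Spec(H)$. For $\Ha^f(\infty)$, the Laplacian on the disjoint union of the $\alpha$-Robin domains with $\alpha$-Robin conditions at the cut points, one should note that the restriction of $f$ to each $\alpha$-Robin domain is itself an eigenfunction of this decoupled operator with eigenvalue $\lambda$ (the cut is made precisely where $\tra f$ vanishes, which is the boundary condition \eqref{eq: delta-inf} imposed by $\Ha^f(\infty)$), so $\lambda\in\Spec(\Ha^f(\infty))$; again by discreteness $\lambda\pm\eps\notin\Spec(\Ha^f(\infty))$ for $\eps$ small. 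This is the same kind of ``$\eps$ small enough, depending on $\alpha$'' caveat already flagged in the statement of Theorem~\ref{thm:Mor-Robin}, so nothing new is required.

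I do not expect a serious obstacle here; the theorem is essentially a repackaging of Theorem~\ref{thm:Mor-Robin} under the dictionary supplied by Theorem~\ref{thm:sf-HBa-path}. The only point that deserves a careful sentence is the matching of the two index conventions: \eqref{eq:sf=Mor} expresses the spectral flow along $t\in\oi$ (the branch where $t$ increases from $0$ through $+\infty$) as a \emph{Morse} index, while \eqref{eq:sf=Pos} expresses the spectral flow along $t\in\io$ as a \emph{positive} index, and one must make sure these are paired with $\lambda+\eps$ and $\lambda-\eps$ in the order dictated by Theorem~\ref{thm:Mor-Robin} (Morse index at $\lambda+\eps$, positive index at $\lambda-\eps$). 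Once this is lined up, both displayed quantities are equal to expressions in $N(\lambda)$, $n(\lambda)$, $\nu_0(f)$ and $\Pof$ that make no reference to $\alpha$, completing the proof.
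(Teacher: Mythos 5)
Your proposal is correct and is essentially the paper's own route: the paper explicitly introduces Theorem~\ref{thm:sf=sf} as a reformulation of Theorem~\ref{thm:Mor-Robin} via the second part of Theorem~\ref{thm:sf-HBa-path}, and your verification that $\lambda\pm\eps$ avoids the spectra of both $H$ and $\Ha^f(\infty)$ is exactly the hypothesis check required to apply that dictionary. The only cosmetic difference is that in Section~\ref{sec:Proof-of-index} the deduction runs in the opposite direction (the spectral flow identities are obtained first, from the Morse indices of the decoupled operator via Proposition~\ref{prop:sf=Mor-1}, and the index identities of Theorem~\ref{thm:Mor-Robin} are then read off through the same correspondence), but since Theorem~\ref{thm:sf-HBa-path} is an exact equality and Theorem~\ref{thm:Mor-Robin} is not proved using Theorem~\ref{thm:sf=sf} as a black box, there is no circularity in your direction.
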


\subsection*{Differences of Dirichlet, Robin, and Neumann counts}

In Section \ref{sec:sf-DRN} we describe four natural paths of operators
connecting $H_0^f(\infty)$ with $\Ha^f(\infty)$ via $H$.
Using Theorem \ref{thm:sf=sf}, we compute their spectral flows 
through levels $\lambda+\eps$ and $\lambda-\eps$
and show that two of these spectral flows are equal to the difference $\Paf-\Pof$ of the Robin and nodal counts.
For $\alpha=\nicefrac{\pi}{2}$ this provides the difference of the Neumann and nodal counts,
whose importance was pointed out in \cite{MR4314130}.

\subsection*{Spectral flow and Betti number}

Two more applications of our results, which relate the Betti number $\beta(\Gamma)$ of the graph 
with the spectral flow of appropriate operator families, are presented in Section \ref{sec:sf-beta}.
See Theorems \ref{thm:beta-beta} and \ref{thm:SF-R-beta}.

Theorem \ref{thm:beta-beta} compares the Neumann-Kirchhoff Laplacian $H=H(\Gamma)$ 
with the Neu\-mann-Kirchhoff Laplacian $H(\Gamma_\B)$ on the dissected graph $\Gamma_\B$
(the graph obtained by cutting $\Gamma$ at points of $\B$).
These two operators are connected by the family $H_{\pi/2}^{\B}(t)$, $t\in\oi$, 
and we show that the spectral flow of this family through the level $\eps$,
with $\eps>0$ small enough,
is equal to $\beta(\Gamma)-\beta(\Gamma_{\B})$,
the number of cycles removed from $\Gamma$ in the process of cutting.

The second result, Theorem \ref{thm:SF-R-beta}, concerns a simple eigenvalue $\lambda$, 
the corresponding eigenfunction $f$, and the family of Hamiltonians $\Ha^f(t)$ parametrized by $t\in\R$.
We show that the number of intersections of spectral curves with the spectral level $\lambda$
is equal to $\beta(\Gamma)$.

\subsection*{Acknowledgments}

At various stages of this work, we enjoyed fruitful discussions with
Gregory Berkolaiko, Graham Cox, Yuri Latushkin and Selim Sukhtaiev
and thank them for their excellent feedback.

All authors were supported by the Israel Science Foundation (ISF grant no.~844/19). 
R.B. and G.S. were also supported by the Binational Foundation Grant (grant no. 2016281). 
M.P. was also supported by the ISF grants no.~431/20 and 876/20 
and by the European Research Council (ERC) under the European Union's Horizon 2020 research and innovation programme (grant no.~101001677);
this work was done during her postdoctoral fellowships at the Technion -- Israel Institute of Technology 
and at the University of Haifa.

\subsection*{Note. }

The present version of this paper was finalized by the second author. 

\section{Basic properties of the $\HBa$ family}\label{sec:delta-s-properties}

Throughout this section we fix an angle $\alpha$ and a set $\B$ of degree two vertices of $\Gamma$.

\subsection*{Self-adjointness.}

In order to simplify computations, we denote the pair (column) of traces $(\tra,\tra')$ by $\trah$, 
so that 
\begin{equation}\label{eq:tra-tro}
	\trah = T_{\alpha}\troh \;\text{ and }\; \troh = T_{\alpha}^{-1}\trah = T_{-\alpha}\trah,
\end{equation}
where $T_\alpha$ is the rotation matrix \eqref{eq: mixed trace map def}.

\begin{prop}\label{prop:HBa-self-adjoint}
Each operator $\HBa(t)$ is self-adjoint.
\end{prop}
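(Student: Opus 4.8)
The plan is to verify self-adjointness of $\HBa(t)$ via the standard Lagrange-form (Green's identity) criterion for quantum graphs: an extension of the Laplacian determined by vertex conditions is self-adjoint if and only if the boundary form $\langle Hf,g\rangle-\langle f,Hg\rangle$ vanishes on the domain and the domain is maximal for this property. Since the Neumann-Kirchhoff conditions at vertices in $\V\setminus\B$ are already known to be self-adjoint, the entire issue localizes to a single degree-two vertex $v\in\B$, where the boundary form contributes the term $\overline{g}\,f' - \overline{g'}\,f$ evaluated with the appropriate orientation-induced signs at $\vp$ and $\vm$. Concretely, I would write the local contribution at $v$ as a symplectic pairing $\omega\big((\tau_0 f(\vm),\tau_0'f(\vm),\tau_0 f(\vp),\tau_0'f(\vp)),(\cdots g\cdots)\big)$ on $\C^4$, and show the $\del(t)$ condition \eqref{eq:delta}--\eqref{eq:delta'} (resp.\ \eqref{eq: delta-inf} for $t=\infty$) cuts out a Lagrangian subspace of this symplectic space.

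The key computational step is to pass from the $(\tro,\tro')$ traces to the mixed $(\tra,\tra')$ traces using \eqref{eq:tra-tro}, i.e.\ $\troh = T_{-\alpha}\trah$. The crucial observation is that $T_\alpha$ is a rotation, hence symplectic: $T_\alpha^{\mathsf T} J T_\alpha = J$ where $J=\smatr{0&1\\-1&0}$, because $\det T_\alpha=1$. Therefore the boundary form expressed in the $(\tra,\tra')$ coordinates has exactly the same shape as in the $(\tro,\tro')$ coordinates — the change of variables is a symplectomorphism on each side of $v$. This reduces the claim to the already-classical fact that the $\delta(t)$ (and $\delta'(t)$, and Dirichlet) conditions define self-adjoint operators; equivalently, one checks directly that for $f,g$ both satisfying \eqref{eq:delta}--\eqref{eq:delta'} one has $\tra'f(\vm)\overline{\tra g(\vm)} - \tra'f(\vp)\overline{\tra g(\vp)} - \big(\text{swap }f\leftrightarrow g\text{, conjugate}\big) = t\,\tra f(\vm)\overline{\tra g(\vm)} - \overline{t\,\tra g(\vm)\,\tra f(\vm)} = 0$ since $t\in\R$, with the continuity condition \eqref{eq:delta} used to identify the $\vp$ and $\vm$ values of $\tra$. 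The $t=\infty$ case is immediate since \eqref{eq: delta-inf} forces all four terms to vanish. A dimension count — the $\del(t)$ condition imposes two (real) linear constraints on a four-dimensional trace space, matching the dimension of a Lagrangian — confirms maximality.

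The one genuine subtlety, rather than an obstacle, is bookkeeping of signs from the orientation assumption (Fig.~\ref{fig:degree-two}): at $\vm$ the derivative $\tau_0'$ points into the vertex while at $\vp$ it points out, so the naive "outgoing derivative" boundary term acquires opposite signs at the two sides, which is precisely why the self-adjoint condition \eqref{eq:delta'} features the difference $\tra'\fvp - \tra'\fvm$ rather than a sum. Once this sign convention is fixed consistently with the definition of $\tra'$ in \eqref{eq: mixed trace map def}, the verification is routine. I would also remark that this proposition can alternatively be quoted from the companion work \cite{prokhorova_quantum}, but the self-contained symplectic argument above is short enough to include directly.
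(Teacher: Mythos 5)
Your proposal is correct and follows essentially the same route as the paper: both apply Green's identity, localize the boundary form to the vertices of $\B$, and exploit the fact that the rotation $T_\alpha$ preserves the symplectic pairing (equivalently, is unitary and commutes with $J=T_{\pi/2}$) to reduce the $\del(t)$ condition to the classical $\delta(t)$ case, finishing with reality of $t$ and a dimension count. The minor sign bookkeeping at $\vp$ versus $\vm$ that you flag is handled identically in the paper via the $|^{v_+}_{v_-}$ difference notation.
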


\begin{proof}
Let $f,g\in H^{2}(\Gamma)$ satisfy fixed self-adjoint boundary conditions
(say, Neumann-Kirchhoff conditions) at vertices not belonging to $\B$. 
By the Green's identity 
\[ \bra{Hf,g}_{\Gamma} - \bra{f,Hg}_{\Gamma} 
  = \sum_{v\in\B}\br{ \bra{f,g'}^{v_+}_{v_-} - \bra{f',g}^{v_+}_{v_-} }
  = \sum_{v\in\B}\bra{J\troh f, \troh g}^{v_+}_{v_-}, \]
where $J = \smatr{0 & -1 \\ 1 & 0} = T_{\pi/2}$
and $|^{v_+}_{v_-}$ denote the difference of the corresponding values at $v_+$ and $v_-$,
so that $\tra f|^{v_+}_{v_-} = \tra f(v_+)-\tra f(v_-)$ and so on.
Substituting \eqref{eq:tra-tro} in this formula 
and taking into account that $T_{-\alpha}$ is unitary and commutes with $J = T_{\pi/2}$, we obtain
\begin{align*}
	\bra{Hf,g}_{\Gamma} - \bra{f,Hg}_{\Gamma} 
		&= \sum_{v\in\B}\bra{JT_{-\alpha}\trah f, T_{-\alpha}\trah g}|^{v_+}_{v_-}
		= \sum_{v\in\B}\bra{J\trah f, \trah g}|^{v_+}_{v_-} \\
		&= \sum_{v\in\B}\br{ \bra{\tra f, \tra'g}^{v_+}_{v_-} - \bra{\tra'f,\tra g}^{v_+}_{v_-} }.
\end{align*}
If $f$ and $g$ satisfy \eqref{eq:delta} at points of $\B$, 
that is, $\tra$-traces of $f$ and $g$ are continuous at these points,
then the last expression can be simplified:
\begin{equation}\label{eq:Hfg-mixed}
	\bra{Hf,g}_{\Gamma} - \bra{f,Hg}_{\Gamma}
		= \sum_{v\in\B}\br{ \bra{\tra f(v), \tra'g|^{v_+}_{v_-}} - \bra{ \tra'f|^{v_+}_{v_-}, \tra g(v)} }.
\end{equation}
If $f$ and $g$ additionally satisfy \eqref{eq:delta'} at points of $\B$,
that is, belong to the domain of $\HBa(t)$, then the last equality takes the form
\[ \bra{Hf,g}_{\Gamma} - \bra{f,Hg}_{\Gamma}
	 = \sum_{v\in\B}\br{ \bra{\tra f(v), t\cdot\tra g(v)} - \bra{t\cdot\tra f(v), \tra g(v)} } = 0. 
\]
Hence the operator $\HBa(t)$ is symmetric. 
Since the boundary conditions have the right dimension, $\HBa(t)$ is self-adjoint. 
\end{proof}

\begin{rem*}
Our proof of this proposition is based on the observation that the 
pair of the traces $(\tra,\tra')$ is obtained from the standard pair $(\tro,\tro')$ by the rotation,
so the $\del(t)$ condition can be seen as the rotation of the standard $\delta(t)$ condition,
which is well known to be self-adjoint.
Another way to prove self-adjointness of $\HBa(t)$ is to write the $\del(t)$ condition 
in the standard form \cite[eq.~(1.4.12)]{BerKuc_graphs} 
using matrices (A.2--A.3) from \cite{Sofer2022},
and then check that the criteria of 
\cite[lem.~2.2]{KosSch_jpa99} are satisfied.
\end{rem*}

\subsection*{Lower bounds.}

It is well known that, for every individual boundary condition, a Hamiltonian on $\Gamma$ is bounded from below.
We need an analogue of this property for our families of boundary conditions.

\begin{prop}\label{prop:bdd-below}
For every angle $\alpha$ and $\eps>0$, the family $\HBa(t)$, $t\in\RR$ is uniformly bounded from below 
on the complement of the interval $(0,\eps)$ if $\alpha\neq0\mod\pi$ 
and on the complement of the interval $(-\infty,-\eps^{-1})$ if $\alpha=0\mod\pi$.
\end{prop}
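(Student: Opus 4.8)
The plan is to reduce the uniform lower bound for the family $\HBa(t)$ to the classical fact that a single self-adjoint Hamiltonian on $\Gamma$ is bounded from below, by exploiting the fact that, away from the bad parameter range, the $\del(t)$ condition can be written as a $\del(s)$ condition with bounded $s$ after a simple change of variable. Concretely, for $\alpha\neq 0\mod\pi$ the point $t=\infty\in\RR$ is harmless: reparametrizing by $s=t\inv$ (or by using that the $\del(t)$ condition depends continuously on $t\in\RR\cong S^1$), one sees that the only possible source of unboundedness as $t$ varies over a circle is a single point, and by the symmetry $\del(t)\leftrightarrow\del'$-type conditions that point is $t=0$ when $\alpha\neq 0$ and $t=\infty$ when $\alpha=0$. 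Thus, outside a small neighborhood of that single bad point — which is precisely the complement of $(0,\eps)$ in the first case and the complement of $(-\infty,-\eps\inv)$ in the second — the family is a norm-resolvent-continuous family of self-adjoint operators parametrized by a \emph{compact} set, each member bounded from below, and one wants to conclude a uniform lower bound.

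First I would set up the quadratic form. For $f$ in the form domain (continuous $\tra$-trace at vertices of $\B$, Neumann-Kirchhoff elsewhere), one has, using \eqref{eq:Hfg-mixed} with $f=g$,
\begin{equation*}
  \bra{\HBa(t)f,f}_\Gamma = \int_\Gamma |f'|^2 \,dx - t\sum_{v\in\B}|\tra f(v)|^2 .
\end{equation*}
Wait — the sign and the precise boundary term depend on the orientation convention; in any case the boundary contribution is a finite sum of terms of the form $c(t)\,|\tra f(v)|^2$ with $c(t)$ a bounded continuous function of $t$ on the relevant compact parameter set (bounded because we have excised the neighborhood of the single bad point). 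So it suffices to bound each $|\tra f(v)|^2$ by $\eta\int_\Gamma |f'|^2 + C_\eta\int_\Gamma|f|^2$ with $\eta$ as small as we like, uniformly in $v\in\B$. This is a standard trace/interpolation estimate: a vertex of degree two sits in the interior of a "smoothed" edge of length $\geq\lmin$, and for a one-variable $H^1$ function on an interval of length $L$ one has the elementary bound $|h(x_0)|^2 \leq \eta\int |h'|^2 + (L\inv + \eta\inv)\int|h|^2$ for any $\eta>0$ (proved by writing $h(x_0)^2 - h(y)^2 = \int_y^{x_0} 2hh'$, averaging over $y$, and applying Cauchy--Schwarz/Young). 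Combining, $\bra{\HBa(t)f,f}_\Gamma \geq (1-\eta\,M)\int_\Gamma|f'|^2 - C\int_\Gamma|f|^2$ where $M$ bounds $\sum_{v\in\B}|c(t)|$; choosing $\eta$ small makes the first coefficient positive, giving $\HBa(t)\geq -C$ uniformly.

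The case $t=\infty$ (entering the compact parameter set for $\alpha\neq0$, or being excluded for $\alpha=0$) needs a word: there the form domain changes, since $\del(\infty)$ cuts the graph and imposes $\tra f(\vpm)=0$; but this only \emph{shrinks} the form domain and \emph{removes} the boundary term, so $\HBa(\infty)$ is bounded below by the same constant, and norm-resolvent continuity of the family at $t=\infty$ (cited from Section~\ref{subsec:Continuity-properties}) glues it to the rest. The one genuine subtlety — the step I expect to be the main obstacle — is getting the bound \emph{uniform} in $t$ rather than merely pointwise: this is exactly where one must know that the coefficient $c(t)$ in the boundary term stays bounded, i.e. that we have correctly identified the unique value of $t\in\RR$ at which it blows up (and that this value is $0$ precisely when $\alpha=0$ and $\infty$ precisely when $\alpha\neq0$, consistent with the exclusion intervals in the statement). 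Once the bad point is correctly located and excised, the estimate above is uniform because all the constants ($\lmin$, $|\B|$, $\sup|c(t)|$ over the compact set) are finite and independent of $t$. I would therefore carry out the steps in the order: (1) write the quadratic form and identify the boundary term and its coefficient $c_\alpha(t)$; (2) determine the unique singular value of $t$ and check it matches the excluded intervals; (3) prove the one-dimensional trace inequality with small parameter $\eta$; (4) assemble the uniform lower bound on the complement of the excluded interval; (5) handle $t=\infty$ separately via form-domain monotonicity and invoke norm-resolvent continuity.
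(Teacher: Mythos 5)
Your overall strategy (write the quadratic form on a $t$-independent form domain and absorb the boundary terms via the one-dimensional trace inequality) is workable, but as written it has two genuine problems, one of which is fatal to the one-sided statement being proved. First, the form is misidentified: for $\alpha\neq0\mod\pi$ the boundary contribution of $\Qat$ is
$\sum_{v\in\B}\cot\alpha\br{\abs{f(\vp)}^{2}-\abs{f(\vm)}^{2}}-\sum_{v\in\B}(t\sin^{2}\alpha)^{-1}\abs{f(\vp)-f(\vm)}^{2}$,
so the $t$-dependence is through $1/t$ (not $t$), and the term is quadratic in the \emph{Dirichlet values} $f(\vpm)$ only. This matters: a term of the form $c(t)\abs{\tra f(v)}^{2}$ with $\alpha\neq 0$ would contain the pointwise derivative $f'(v)$, which is \emph{not} form-bounded relative to $\norm{f'}_{L^2}^2+\norm{f}_{L^2}^2$, so your step (3) could not absorb it. You are saved only because the true form has no derivative traces, but your write-up does not establish that.

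Second, and more seriously, your key claim that the coefficient $c(t)$ is ``bounded because we have excised the neighborhood of the single bad point'' is false: the complement of $(0,\eps)$ still contains $t\to 0^{-}$, where $-(t\sin^{2}\alpha)^{-1}\to+\infty$; likewise for $\alpha=0$ the complement of $(-\infty,-\eps^{-1})$ contains $t\to+\infty$, where the coefficient $t$ is unbounded. The excluded intervals are one-sided precisely because the divergent term has a \emph{favorable sign} on the non-excluded side, and your argument never records this. As stated, your estimate only yields uniform boundedness away from a two-sided neighborhood of the bad point, which is strictly weaker than the Proposition and would not suffice for its use in Proposition \ref{prop:sf=Mor-1} (which needs uniformity on all of $\io$, including $t\to 0^{-}$, resp.\ on $\oi$ including $t\to+\infty$). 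The paper avoids all of this by a monotonicity observation: on the relevant range the $t$-dependent coefficient is monotone, so $\Qat[f,f]$ is bounded below by the form at the single endpoint value ($t=\eps$, resp.\ $t=-\eps^{-1}$), reducing the uniform bound to the lower bound of one fixed self-adjoint operator. If you replace ``$c(t)$ is bounded'' by ``$c(t)$ is monotone, hence the form is minimized at the endpoint of the excluded interval,'' your argument closes and essentially coincides with the paper's; the trace inequality is then not even needed.
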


\begin{proof}
Let $\Qat$ be the (closed symmetric) quadratic form associated with the self-adjoint operator $\HBa(t)$.
The lower bound of $\HBa(t)$ coincides with the lower bound of $\Qat$.
For an arbitrary boundary condition the associated quadratic form is given by 
\cite[thm.~1.4.11]{BerKuc_graphs};
its domain is contained in the first Sobolev space 
$H^1(\Gamma) = \oplus_{e\in\E}H^{1}(e)$. 
Since each $\HBa(t)$ is bounded from below, 
we can exclude any finite number of points, in particular $t=0,\infty$, from consideration. 

Consider first the case $\alpha=0$.
For $t\neq\infty$ 
\begin{equation}\label{eq:Q0}
\dom\QQ_0^t = H^{1}(\Gamma)\cap C(\Gamma), \quad\quad 
\QQ_0^t[f,f] = \norm{f'}^2_{\Gamma} + t\sum_{v\in\B}\abs{f(v)}^{2},
\end{equation}
so for $t\geq -\eps^{-1}$ the lower bound of $H^\B_0(t)$ is greater or equal 
than the lower bound of $H^\B_0(-\eps^{-1})$.

Let now $\alpha\neq 0$. 
For $t\neq 0$ the Dirichlet part of vertex boundary conditions at $v\in\B$ is trivial
and thus $\dom\Qat = H^{1}(\Gamma)\cap C\br{\Gamma\backslash\B}$ is independent of $t\neq 0$. 
Writing the formula for $\Qat$ from \cite[app.~A]{Sofer2022} in a more convenient form, 
for $t\neq0,\infty$ we get
\begin{align}\label{eq:Q1}
\Qat[f,f] = \norm{f'}^2_{\Gamma}
  + \sum_{v\in\B} \cot\alpha\br{\abs{f(v_+)}^{2} - \abs{f(\vm)}^{2}} 
  - \sum_{v\in\B} \frac{1}{t\sin^{2}\alpha}\abs{f(v_+)-f(v_-)}^{2},
\end{align}
so for $t\in(-\infty,0)\cup[\eps,\infty)$ the lower bound of $\HBa(t)$ 
is greater or equal than the lower bound of $\HBa(\eps)$.
This completes the proof of the proposition.
\end{proof}

\begin{figure}
\includegraphics[scale=0.65]{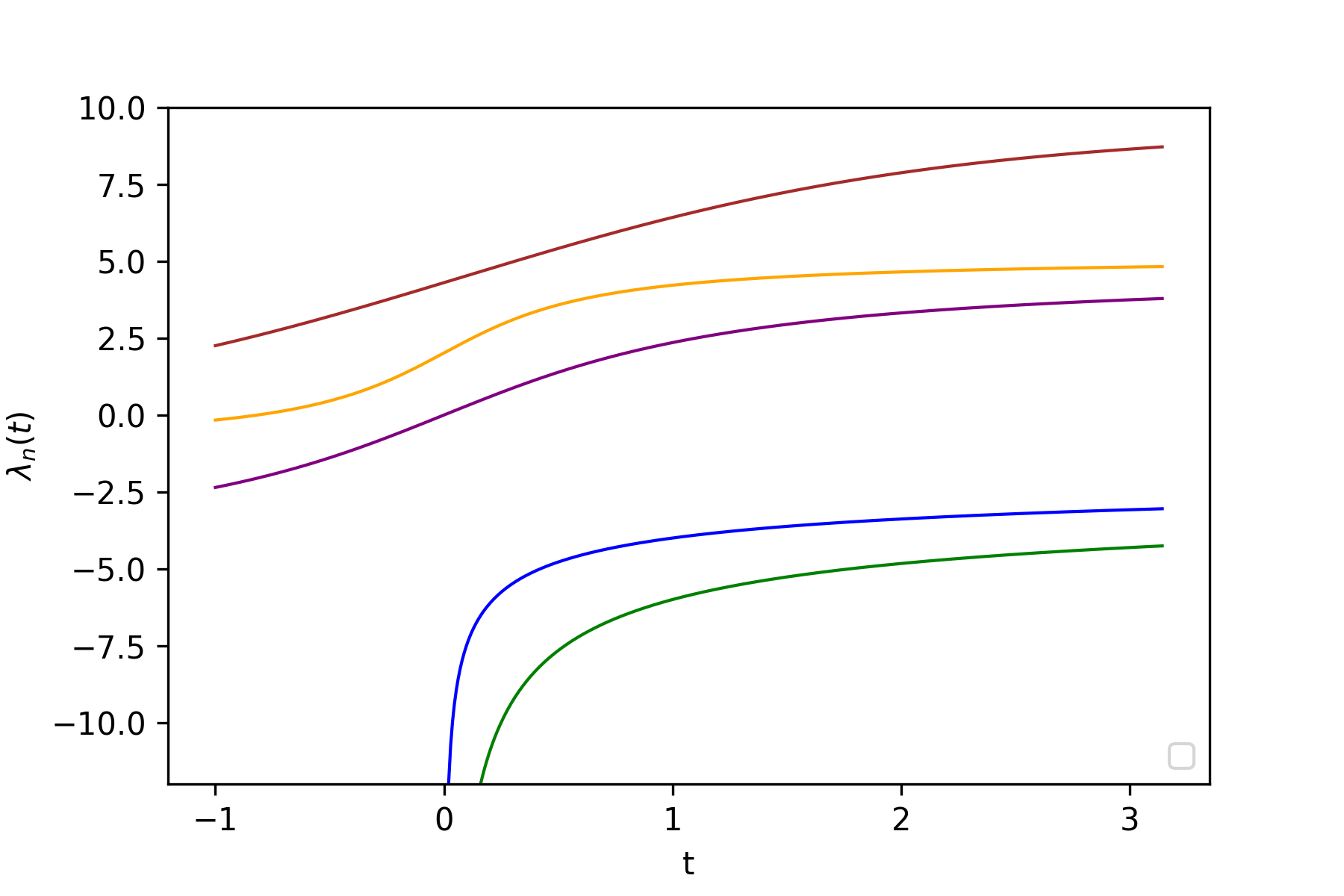}
\caption[Absence of lower bound.]
{\small{The lowest analytic eigenvalue branches of $\HBa(t)$ with $|\B| = 2$ and $\alpha\neq 0$. 
Two spectral curves tend to $-\infty$ as $t\to +0$,
showing that the family is not uniformly bounded from below.}
\label{fig:unbound}}
\end{figure}

\begin{rem*}
By the first part of Theorem \ref{thm:sf-HBa-path},
the spectral flow of the loop $\HBa(t)$, $t\in\RR$ is equal to $|\B|$.
Since it does not vanish, this family cannot be uniformly bounded from below 
on the whole extended real line $\RR$. 
Therefore, some spectral curves tend to $-\infty$.
Proposition \ref{prop:bdd-below} shows that such behavior can occur only 
at $t\to-\infty$ for $\alpha=0$ and at $t\to+0$ for $\alpha\neq0$.
The existence of such spectral curves is related to the jump of the
dimension of the Dirichlet part in a boundary condition. 
This phenomenon is discussed in more details in \cite{prokhorova_quantum}.
See also discussion in \cite{BerCoxLatSuk_arxiv24}, where the effect
of eigenvalues tending $-\infty$ is described using the Friedrichs extension.
\end{rem*}

\subsection*{Spectral curves.}

By \cite[thm.~3.1.13]{BerKuc_graphs}, the spectral curves of the family $H^v_0(t)$
(with $\delta(t)=\delta_0(t)$ condition at $v$ and fixed conditions at other vertices) 
are either constant or increasing. 
We need such a property for all our families $\HBa(t)$. 

\begin{prop}\label{prop:constant-curve}
	Let $\alpha$ be a fixed angle.
	If $f$ is an eigenfunction of $\HBa(t)$ for two different values of $t$, 
	then it is an eigenfunction of $\HBa(t)$ for every $t\in\RR$ 
	and thus gives rise to a constant (\q{horizontal}) spectral curve.
\end{prop}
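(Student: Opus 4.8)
The plan is to use the $\del(t)$ vertex condition directly: the key point is that if $f$ satisfies the $\del(t)$ condition at every vertex $v\in\B$ for two distinct values $t_1\neq t_2$, then in fact $f$ has vanishing $\tra$-trace at every $v\in\B$, which forces $f$ to satisfy the $\del(t)$ condition for \emph{all} $t\in\RR$ (including $t=\infty$). First I would observe that, away from $\B$, the conditions on $f$ do not involve $t$ at all: $f$ lies in $H^2(\Gamma)$, satisfies $-f''=\lambda f$ on each edge, and satisfies the fixed Neumann-Kirchhoff conditions at vertices outside $\B$; none of this changes with $t$. So the only $t$-dependence is through \eqref{eq:delta}--\eqref{eq:delta'} at the vertices of $\B$.

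Next I would exploit the hypothesis at each $v\in\B$ separately. Since $f$ is an eigenfunction of $\HBa(t_1)$ and of $\HBa(t_2)$, it satisfies \eqref{eq:delta'} with both parameters:
\begin{align*}
\tra'f(v_+)-\tra'f(v_-) &= t_1\cdot\tra f(v_-),\\
\tra'f(v_+)-\tra'f(v_-) &= t_2\cdot\tra f(v_-).
\end{align*}
Subtracting gives $(t_1-t_2)\,\tra f(v_-)=0$, and since $t_1\neq t_2$ we conclude $\tra f(v_-)=0$. By \eqref{eq:delta} we also have $\tra f(v_+)=\tra f(v_-)=0$, and then \eqref{eq:delta'} reduces to $\tra'f(v_+)-\tra'f(v_-)=0$. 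These equalities hold at every $v\in\B$, so $f$ satisfies precisely the conditions \eqref{eq: delta-inf} at each point of $\B$, i.e. $f\in\dom\HBa(\infty)$, and $\HBa(\infty)f=\lambda f$.

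Finally I would note that, conversely, any $f$ with $\tra f(v_\pm)=0$ and $\tra'f(v_+)=\tra'f(v_-)$ at all $v\in\B$ automatically satisfies \eqref{eq:delta}--\eqref{eq:delta'} for \emph{every} finite $t$ as well, since both sides of \eqref{eq:delta'} are then zero. Combined with the $t$-independence of all other constraints noted above, this shows $f\in\dom\HBa(t)$ and $\HBa(t)f=\lambda f$ for every $t\in\RR$; hence $\lambda$ is a constant eigenvalue branch of the family. There is no real obstacle here — the argument is a one-line linear-algebra cancellation at each vertex — so I would keep the write-up short; the only point worth stating carefully is the case $t=\infty$, where \eqref{eq:delta'} is vacuous and the content of the $\del(\infty)$ condition is exactly $\tra f(v_\pm)=0$, which we have already derived.
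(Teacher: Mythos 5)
Your proposal is correct and follows essentially the same route as the paper: deduce $\tra f(v_\pm)=0$ and $\tra'f(v_+)-\tra'f(v_-)=0$ at each $v\in\B$ from the two instances of the $\del(t)$ condition, then observe that these equalities make all the conditions \eqref{eq:delta}--\eqref{eq:delta'} (and \eqref{eq: delta-inf}) hold for every $t\in\RR$. The only point to state explicitly is the case where one of $t_1,t_2$ equals $\infty$ (so the displayed subtraction does not literally apply); there the vanishing of the $\tra$-trace is immediate from \eqref{eq: delta-inf} and the finite-parameter condition then gives the vanishing jump of $\tra'$, exactly as your closing remark indicates.
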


\proof
If $f$ satisfies $\delta_\alpha(t)$ condition at a vertex $v\in\B$
for two different values $t_1\neq t_2\in\RR$ of $t$ simultaneously,
then $\tra\fvp = \tra\fvm = 0$ and, since at least one of $t_1,t_2$ is not equal to $\infty$, 
\eqref{eq:delta'} implies $\tra'\fvp-\tra'\fvm = 0$. 
It follows that $f$ satisfies \eqref{eq:delta'} at $v$ for all $t\in\RR$.
The boundary conditions at vertices outside of $\B$ 
are independent of $t$ and thus are also satisfied by $f$.
Therefore, $f$ belongs to the domain of $\HBa(t)$ for every $t\in\RR$.
\qed\vspace{3pt}

\begin{prop}\label{prop:curves-analytic}
The spectral set 
\begin{equation}\label{eq:sp-set}
	\sett{(t,\lambda)}{t\in\RR, \; \lambda\in\Spec\HBa(t)} 
\end{equation}
is the union of branches of real analytic curves.
\end{prop}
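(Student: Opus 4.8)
The plan is to realize the spectral set as the zero locus of an analytic family of matrices and then invoke the classical Kato–Rellich theory of analytic perturbations, together with a compactification argument to handle the point $t=\infty$.

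First I would fix a compact real-analytic chart of $\RR \cong S^1$ and argue locally. Away from $t=\infty$, the operators $\HBa(t)$ all act on the fixed Hilbert space $L^2(\Gamma)$, and one would like to say that $\HBa(t)$ is a real-analytic (type (A) or type (B)) family of self-adjoint operators in the sense of Kato. The cleanest route is via the resolvent: a scalar $\lambda$ lies in $\Spec\HBa(t)$ iff the boundary-value problem $-f''=\lambda f$ with $\delta_\alpha(t)$ conditions at $\B$ and Neumann--Kirchhoff conditions elsewhere has a nontrivial solution. On each edge $e$ of length $\ell_e$, the solution space of $-f''=\lambda f$ is two-dimensional, spanned by $\cos(\sqrt\lambda x)$ and $\sin(\sqrt\lambda x)/\sqrt\lambda$ (entire in $\lambda$, so no branch issue at $\lambda=0$). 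Assembling the values and oriented derivatives at the endpoints gives a linear map, and imposing all vertex conditions — the Neumann--Kirchhoff ones, and the $\delta_\alpha(t)$ conditions \eqref{eq:delta}--\eqref{eq:delta'} written out using the rotation matrix $T_\alpha$ — produces a square matrix $M(t,\lambda)$ whose entries are entire in $\lambda$ and affine (hence real-analytic) in $t$, for $t\in\R$. Then
\[
  \sett{(t,\lambda)}{t\in\R,\ \lambda\in\Spec\HBa(t)} = \sett{(t,\lambda)}{t\in\R,\ \det M(t,\lambda)=0\,}.
\]
Since $\det M(t,\lambda)$ is real-analytic in $(t,\lambda)$ and, by self-adjointness plus compact resolvent, for each fixed $t$ it is not identically zero in $\lambda$, the implicit function theorem / Weierstrass preparation argument (this is exactly the content of Kato's analytic perturbation theory, \cite[Ch.~VII]{...} — but I will just cite the standard fact) shows the zero set is a locally finite union of graphs of real-analytic functions $\lambda = \lambda_k(t)$. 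Equivalently, one may quote directly that a real-analytic family of self-adjoint operators with compact resolvent has eigenvalues and eigenprojections that are real-analytic in the parameter (Rellich's theorem), which is the version used throughout the quantum-graphs literature and in \cite{prokhorova_quantum}.

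Next I would handle the exceptional point $t=\infty$. Near $t=\infty$ I reparametrize by $s=1/t$, so $s=0$ corresponds to $t=\infty$; one must check that in this chart the family is again real-analytic. For $\alpha\neq 0\mod\pi$ the $\delta_\alpha$ condition \eqref{eq:delta'} reads $t^{-1}(\tra'\fvp-\tra'\fvm) = \tra\fvm$, i.e. $s(\tra'\fvp-\tra'\fvm)=\tra\fvm$, which is manifestly affine (hence analytic) in $s$ including $s=0$; likewise \eqref{eq:delta} is $t$-independent. For $\alpha=0\mod\pi$ one multiplies the Neumann--Kirchhoff-type balance equation by $s$ in the same way, recovering \eqref{eq: delta-inf} at $s=0$. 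In either case the matrix $M$ becomes $\widetilde M(s,\lambda)$, entire in $\lambda$ and affine in $s$, and the same Rellich/Weierstrass argument applies on a neighborhood of $s=0$. Overlapping the chart $t\in\R$ with the chart $s=1/t$ near $\infty$ covers all of $\RR$, and on the overlap the two descriptions agree, so the branches glue. The fact that the family is continuous in the norm-resolvent sense on all of $\RR$ (asserted in the excerpt, Section on continuity properties) guarantees there is no "escape" of spectrum at $t=\infty$ beyond what the matrix description already sees.

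The main obstacle, and the step deserving the most care, is precisely the behavior at $t=\infty$ combined with the phenomenon — already flagged in the Remark after Proposition~\ref{prop:bdd-below} — that some spectral curves run off to $-\infty$ (at $t\to-\infty$ when $\alpha=0$, and at $t\to+0$ when $\alpha\neq0$). This means the "branches of real analytic curves" in the statement are genuinely allowed to be non-compact, leaving the region $[a,b]\times\R$ through the bottom, and one should be careful that the claim is about the local structure of the spectral set (a locally finite union of analytic arcs), not about it being a finite union of graphs over all of $\RR$. The determinant-of-$M$ formulation makes this transparent: the bad curves are exactly zeros of $\det M$ that, as functions of $t$, have a pole-like escape, which is perfectly compatible with $\det M(t,\lambda)$ being a nice real-analytic function of $(t,\lambda)$ — the curve simply has no point over those values of $\lambda$ for $t$ near the bad value. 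So once the matrix $M$ (and its $s=1/t$ counterpart) is written down and its analyticity in $(t,\lambda)$ (resp. $(s,\lambda)$) is verified, the proposition follows immediately from the standard Rellich-type theorem on analytic perturbations of self-adjoint operators with compact resolvent, or equivalently from Weierstrass preparation applied to $\det M$; I would invoke one of these rather than reprove it.
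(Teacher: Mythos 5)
Your proposal is correct and follows essentially the same route as the paper: the paper likewise observes that the $\delta_\alpha(t)$ conditions depend analytically on $t$ viewed as a point of the extended (complex) line, invokes \cite[thms.~2.5.4, 2.5.5]{BerKuc_graphs} — which is precisely the secular-determinant reduction you carry out by hand — to conclude that $\HBa(t)$ is an analytic family with compact resolvents, and then cites \cite[thm.~VII.1.8]{Kato_perturbation}. Your explicit construction of $M(t,\lambda)$ and the $s=1/t$ chart merely unpacks those citations, so the two arguments coincide in substance.
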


\begin{proof}
The boundary conditions (\ref{eq:delta}--\ref{eq:delta'}) depend analytically on $t$, 
where $t$ is considered as a point of the extended complex plane. 
By \cite[thms. 2.5.4 and 2.5.5]{BerKuc_graphs},
$\HBa(t)$ is an analytic family of unbounded operators with compact resolvents. 
The statement of the theorem then follows from \cite[thm. VII.1.8]{Kato_perturbation}.
\end{proof}


\begin{prop}\label{prop:monotonicity} 
The spectral set \eqref{eq:sp-set} is the union of strictly increasing spectral curves 
and possibly also some horizontal spectral curves $\lambda(t)\equiv\lambda_j$
described by Proposition \eqref{prop:constant-curve},
where $\set{\lambda_j}$ is a discrete set bounded from below, possibly empty.
\end{prop}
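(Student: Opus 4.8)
The plan is to combine the three preceding propositions with a monotonicity argument based on the quadratic forms computed in Proposition~\ref{prop:bdd-below}. By Proposition~\ref{prop:curves-analytic} the spectral set \eqref{eq:sp-set} is a union of real-analytic branches, and by Proposition~\ref{prop:constant-curve} any branch that is horizontal on an open subinterval is horizontal on all of $\RR$; such a branch contributes one of the $\lambda_j$. So it suffices to show that every non-horizontal analytic branch is \emph{strictly increasing} in $t$, and to check that the set $\set{\lambda_j}$ of horizontal levels is discrete and bounded from below.

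For the monotonicity of a non-horizontal branch $\lambda(t)$ with normalized eigenfunction $f_t$, I would use the Feynman--Hellmann / first-variation formula $\lambda'(t) = \partial_t \Qat[f_t,f_t]$ evaluated at fixed $f_t$. In the case $\alpha=0\mod\pi$, formula \eqref{eq:Q0} gives $\partial_t\QQ_0^t[f,f] = \sum_{v\in\B}\abs{f(v)}^2 \geq 0$, with equality only if $f$ vanishes at every $v\in\B$; in that case \eqref{eq:delta'} forces $\tra'f$ to be continuous across each $v\in\B$ as well, so $f$ satisfies the boundary conditions for all $t$ and the branch is horizontal. Hence a non-horizontal branch has $\lambda'(t)>0$. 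For $\alpha\neq0\mod\pi$ one argues the same way using \eqref{eq:Q1}: there $\partial_t\Qat[f,f] = \sum_{v\in\B} \frac{1}{t^2\sin^2\alpha}\abs{f(v_+)-f(v_-)}^2 \geq 0$ for $t\in\R\setminus\{0\}$, again with equality exactly when $\tra f(v_+)=\tra f(v_-)$ for all $v\in\B$, which together with \eqref{eq:delta} and the continuity of the form domain forces $f$ to lie in the domain of $\HBa(t)$ for all $t$ — a horizontal branch. One must treat the special values $t=0,\infty$ (where the form description changes) by analyticity: a branch that is increasing on $\R\setminus\{0\}$ and analytic through $t=0$ and $t=\infty$ is increasing on all of $\RR$, since an analytic function with nonnegative derivative off a discrete set, not locally constant, is strictly increasing.

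The horizontal levels $\lambda_j$ are, by Proposition~\ref{prop:constant-curve}, precisely those $\lambda$ admitting an eigenfunction satisfying $\tra f(v_+)=\tra f(v_-)=0$ and $\tra'f(v_+)=\tra'f(v_-)$ at every $v\in\B$, i.e.\ eigenvalues of the Laplacian with these reinforced (Dirichlet-type plus derivative-continuity) conditions at $\B$ and Neumann--Kirchhoff elsewhere — a single self-adjoint operator with compact resolvent, hence its spectrum is discrete and bounded below, and may be empty. That this operator's spectrum is exactly the horizontal part follows because membership of $f$ in the domain of $\HBa(t)$ for two distinct $t$ is equivalent to membership for all $t$. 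I expect the main obstacle to be the careful bookkeeping at the parameter values $t=0$ and $t=\infty$, where the form domain jumps (the Dirichlet part of the boundary condition appears), so that the clean formulas \eqref{eq:Q0}--\eqref{eq:Q1} are not directly available; the resolution is to invoke real-analyticity of the branches (Proposition~\ref{prop:curves-analytic}) to extend strict monotonicity across those isolated values rather than to compute derivatives there directly.
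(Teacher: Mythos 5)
Your proposal follows essentially the same route as the paper: real-analyticity of the branches, the first-variation (Feynman--Hellmann) formula $\d_t\lambda=\br{\d_t\Qat}[f,f]$ computed from \eqref{eq:Q0} and \eqref{eq:Q1}, identification of zero-derivative points with membership in $\dom\HBa(\infty)$ and hence (via Proposition~\ref{prop:constant-curve}) with horizontal curves, and handling of $t=0,\infty$ by analyticity. One small correction: for $\alpha\neq0$ the vanishing of $\br{\d_t\Qat}[f,f]$ is equivalent to $\fvp=\fvm$ (continuity of the ordinary trace), not to $\tra\fvp=\tra\fvm$, which is already imposed by \eqref{eq:delta}; your subsequent deduction is the right one once this is fixed.
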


See \cite[sec.~3.1.7, fig.~2]{BerKuc_graphs} for an illustration. 
Note that \eqref{eq:sf=B} and Proposition \ref{prop:monotonicity} imply 
that the family $\HBa(t)$ has exactly $\abs{\B}$ strictly increasing spectral curves.

\begin{proof}
We follow the methods of the proof of \cite[lemma 2]{BerCoxMar_lmp19}. 
By Proposition \ref{prop:curves-analytic}, a spectral curve $\lambda(t)$ 
can be supposed to be analytic locally outside of a discrete set of exceptional values of $t$. 
Excluding a discrete set of values does not influence our conclusions, 
and we additionally exclude $t=0,\,\infty$ from consideration.
By \cite[thm.~VII.1.8]{Kato_perturbation} one can choose eigenfunctions 
$f(t)\in\ker\br{\HBa(t)-\lambda(t)}$ depending analytically on $t$.
In terms of the associated quadratic form,
\begin{equation}\label{eq:mon1}
\Qat\brr{f(t),g} = \bra{Hf(t),g} = \lambda(t)\bra{f(t),g} \text{ for every } g\in\dom\Qat.
\end{equation}
Let $\d_t$ denote the derivative with respect to $t$.
The domain of $\Qat$ does not depend on $t$ for $t\in\R\setminus 0$,
so $\dom\br{\Qat}$ contains the derivative $\d_t f$ as well.
Differentiating both sides of \eqref{eq:mon1} and then taking $g=f$, we obtain
\begin{equation}\label{eq:mon2}
\br{\d_t\Qat}\brr{f,f} + \Qat\brr{\d_t f,f} = \br{\d_t\lambda}\bra{f,f} + \lambda\bra{\d_t f,f}.
\end{equation}
On the other hand, substituting $g=\d_t f$ in \eqref{eq:mon1} provides
\begin{equation}\label{eq:mon3}
\Qat\brr{f,\d_t f} = \lambda\bra{f,\d_t f}.
\end{equation}
Taking \eqref{eq:mon2} and \eqref{eq:mon3} together, we obtain 
\begin{equation}\label{eq:mon4}
\d_t\lambda\cdot\norm{f}^2 = \br{\d_t\Qat}\brr{f,f}.
\end{equation}
Choosing a normalized eigenfunction $f=f(t)$ (that is, $\norm{f}=1$) 
and substituting (\ref{eq:Q0}, \ref{eq:Q1}) in \eqref{eq:mon4}, we see that
\[ \d_t\lambda = \case{\sum_{v\in\B}\abs{f(v)}^{2} &\text{if } \alpha=0 \\
   \br{t\sin\alpha}^{-2}\sum_{v\in\B}\abs{f(v_+)-f(v_-)}^{2}  &\text{if } \alpha\neq 0 } \]
Both expressions in this formula are non-negative, so the spectral curve $\lambda(t)$ is non-decreasing.
If $\d_t\lambda>0$ everywhere except some discrete set of points, 
then $\lambda(t)$ is a strictly increasing function.

Suppose now that $\d_t\lambda = 0$ at some non-singular point $t_0\neq0,\infty$.
For $\alpha=0$ this implies $f(t_0)\in\dom \HBa(\infty)$.
For $\alpha\neq0$ this implies continuity of $f=f(t_0)$ at points $v\in\B$;
together with continuity of the $\tra$-trace \eqref{eq:delta} this implies 
continuity of the $f'$ and $\tra'f$ at $v$, so from \eqref{eq:delta'} we get $t_0\cdot\tra f(v)=0$
and thus $f(t_0)\in\dom\HBa(\infty)$ as well.
Since $f(t_0)$ also belongs to the domain of $\HBa(t_0)$,
by Proposition \ref{prop:constant-curve} $\lambda(t)$ is a horizontal spectral curve.
The levels of such horizontal spectral curves belong to the spectrum of $H$, 
so the set consisting of these levels is discrete and bounded from below.
\end{proof}

\section{Spectral flow}\label{sec:SF}

\subsection*{Continuity properties.}\label{subsec:Continuity-properties}

In this paper we often deal with \emph{families} of operators. For
this, we need an appropriate notion of continuity. For bounded operators
we always use the norm topology. 

With a closed linear subspace $L$ of a fixed (finite- or infinite-dimensional) Hilbert space $V$ 
we associate the orthogonal projection $P_{L}\colon H\to H$ with the range $L$. 
The distance between two subspaces $L$ and $L'$ of $V$ is defined as the norm $\norm{P_{L}-P_{L'}}$. 
This provides the so called \emph{gap metric} and \emph{gap topology} on the space of closed subspaces of $V$. 
In other words, a family $L(t)\subset V$ is said to be continuous 
if the family $P(t)$ of orthogonal projections onto $L(t)$ is continuous.

With a closed operator in a Hilbert space we associate its graph. 
The gap topology on subspaces then provides the so called \emph{graph topology} on closed operators. 
In other words, a family $S(t)$ of closed operators is said to be \emph{graph
continuous} if the family $P(t)$ of orthogonal projections onto the
graphs of $S(t)$ is continuous in the norm topology. The graph topology
is the most convenient in the context of differential operators with
boundary conditions. 

If operators $S(t)$ are self-adjoint, as is the case in this paper,
then graph continuity is equivalent to continuity in the uniform resolvent
sense, that is, norm continuity of the family $t\mapsto(S(t)+\lambda)^{-1}$
for some (and then every) constant $\lambda\in\C\setminus\R$.

\subsection*{Spectral flow.}

The spectral flow is well defined for graph continuous families of self-adjoint Fredholm operators. 
Let us briefly recall its definition and properties. 
To simplify exposition, we consider only the case of operators with compact resolvents 
(we deal only with such operators in this paper). 

Let $S(t)$, $t\in[a,b]$ be a graph continuous one-parameter family 
of self-adjoint operators with compact resolvents 
(so that each operator has a discrete real spectrum) and $\mu\in\R$. 
One can always choose a partition $a=t_{0}<t_{1}<...<t_{N}=b$ 
and positive numbers $\eps_{j}$, $j=1,\ldots,N$ such that 
\begin{equation*}
\mu+\eps_{j}\notin\spec{S(t)} \txt{for} t\in\brr{t_{j-1},t_{j}}, j=1,\ldots,N.
\end{equation*}

\begin{defn*}
The spectral flow of $S(t)$ through $\mu$ is defined by the formula
\begin{equation}\label{eq:sf-definition}
\spf_{\mu}(S(t))_{t\in[a,b]}\coloneqq\sum_{j=1}^{N}
  \br{\rank \one_{[\mu,\,\mu+\eps_{j}]}\br{S(t_{j})}
	     -\rank \one_{[\mu,\,\mu+\eps_{j}]}\br{S(t_{j-1})}},
\end{equation}
where $t_{j}$ and $\eps_{j}$ are as above, 
$\one_{X}(S)$ denotes the spectral projection of the operator $S$ corresponding to a subset $X\subset\R$,
and $\rank\one_{X}$ is the rank of the projection (that is, the dimension of its range). 
This sum does not depend on the choice of a partition and numbers $\eps_{j}$. 
\end{defn*}

For $\mu\in\R$ the family $S(t)-\mu$ is graph continuous if and only if $S(t)$ is graph continuous, and  
\[ \spf_{\mu}(S(t))=\spf_{0}(S(t)-\mu). \]

\begin{rem*}
The definition above is one of two standard versions of the spectral flow. 
The alternative version is obtained by replacing closed intervals $\brr{\mu,\mu+\eps_{j}}$
in \eqref{eq:sf-definition} by semi-closed intervals $(\mu,\mu+\eps_{j}]$.
These two versions of the spectral flow differ by the number 
\[ \dim\ker\br{S(b)-\mu}-\dim\ker\br{S(a)-\mu}. \]
If these dimensions, that is, the multiplicities of $\mu$ in $S(a)$ and $S(b)$, coincide 
(which is often the case in this paper),
then the two versions of the definition provide the same value of the spectral flow. 
In particular, the spectral flow of a loop of operators is the same for both versions of the definition.
\end{rem*}

The spectral flow $\spf_{\mu}$ for graph continuous families has the following well known properties 
(in fact, it is defined by these properties and the normalization condition, see \cite{lesch2005uniqueness}):
\begin{enumerate}
\item If $\mu$ lies in the resolvent set of each $S(t)$, then $\spf_{\mu}\br{S(t)}=0$;
\item The spectral flow is additive with respect to concatenation of paths;
\item The spectral flow is homotopy invariant in the class of loops, or paths with fixed ends.
\end{enumerate}

\subsection*{Quantum graphs.}

Let $v$ be a vertex of $\Gamma$ and $f\in H^{2}(\Gamma)$. 
The boundary values $f_{e}(v)$, $e\in\E_v$ form a vector in $\C^{\E_v}$,
and similarly for the derivatives $\nabla f_{e}(v)$. 
A vertex boundary condition at $v$ for a Schr\"odinger operator $H$ is a linear
condition on these boundary values $f_{e}(v)$ and $\nabla f_{e}(v)$,
that is, it is given by a subspace $L^{v}$ of $\C^{\E_v}\oplus\C^{\E_v}$.
A (local) boundary condition for $H$ is given by a tuple of vertex boundary conditions $\br{L^{v}}_{v\in\V}$,
or equivalently by the subspace
\[ L=\bigoplus_{v\in\V}{L^{v}} \subset \C^{2d}, \quad d=\sum_{v\in\V} \deg(v) = 2\abs{\E}. \] 
We will denote the corresponding unbounded operator on $L^{2}(\Gamma)$ by $H_{L}$. 
The subspaces $L^{v}$ should satisfy additional requirements which guarantee the self-adjointness
of the corresponding operator $H_{L}$, e.g. the dimension condition $\dim L^{v}=\abs{\E_{v}}=\deg(v)$.

\begin{rem*}
 More generally, every subspace $L$ of $\C^{2d}$ determines a global boundary condition for $H$;
we will use such boundary conditions in the proof of Theorem \ref{thm:sf-wind-loop} in the appendix.
Similarly to the local case, $L$ should satisfy additional requirements 
in order to guarantee self-adjointness of $H_L$.
\end{rem*}

\begin{prop}
\label{prop:HL-cont} 
The operator $H_{L}$ depends graph continuously on the boundary condition $L\subset\C^{2d}$. 
\end{prop}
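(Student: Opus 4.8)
\textbf{Plan for the proof of Proposition~\ref{prop:HL-cont}.}

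The plan is to reduce graph continuity of the operator family $H_L$ to continuity of the corresponding graphs of operators in the gap topology, and to exploit the fact that the boundary condition enters only through a finite-dimensional linear constraint on a fixed trace space. First I would fix the "maximal" operator $H_{\max}$ acting on all of $H^2(\Gamma) = \oplus_{e\in\E}H^2(e)$ with no boundary conditions, and the "minimal" operator $H_{\min}$ with all of $f_e(v)$ and $\nabla f_e(v)$ vanishing. The trace map $\gamma\colon H^2(\Gamma)\to\C^{2d}$, $f\mapsto\br{(f_e(v))_{v,e},(\nabla f_e(v))_{v,e}}$, is bounded and surjective, with kernel $\dom H_{\min}$. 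For a subspace $L\subset\C^{2d}$, the operator $H_L$ is the restriction of $H_{\max}$ to $\dom H_L = \gamma^{-1}(L)$. The graph of $H_L$ inside $L^2(\Gamma)\oplus L^2(\Gamma)$ is then $\mathrm{gr}(H_L) = \sett{(f,H_{\max}f)}{f\in H^2(\Gamma),\ \gamma(f)\in L}$, which is the preimage of $L$ under a fixed bounded linear map $\Phi\colon \mathrm{gr}(H_{\max})\to\C^{2d}$ (namely $\Phi = \gamma\circ\pi_1$, where $\pi_1$ is the projection to the first coordinate, restricted to the graph of $H_{\max}$, which is a Hilbert space in the graph norm).

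The key step is then the following general fact: if $W$ is a Hilbert space, $\Phi\colon W\to\C^{N}$ a fixed bounded surjective linear map, and $L(t)\subset\C^N$ a continuous family of subspaces (in the gap metric on $\C^N$), then the family $\Phi^{-1}(L(t))\subset W$ is continuous in the gap metric on $W$. I would prove this by writing $\Phi^{-1}(L) = (\ker\Phi) \oplus \br{(\ker\Phi)^\perp \cap \Phi^{-1}(L)}$ and noting that on the finite-dimensional complement $(\ker\Phi)^\perp$ the map $\Phi$ restricts to a linear isomorphism onto $\C^N$, so $\Phi^{-1}(L)$ depends continuously on $L$ there; since $\ker\Phi$ is a fixed summand, the orthogonal projection onto $\Phi^{-1}(L(t))$ is the sum of the fixed projection onto $\ker\Phi$ and a projection onto a finite-dimensional subspace that varies continuously with $L(t)$, hence is norm-continuous. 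Applying this with $W=\mathrm{gr}(H_{\max})$, $N=2d$, $\Phi$ as above, and $L(t)$ the given continuous family of boundary conditions, yields that $\mathrm{gr}(H_L)$ depends continuously on $L$ in the gap topology, i.e.\ $H_L$ is graph continuous.

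The main obstacle I anticipate is verifying that $\Phi$ is bounded and surjective, i.e.\ that the trace map $\gamma$ is bounded on $H^2(\Gamma)$ (clear from the one-dimensional Sobolev trace estimates on each edge) and that it remains surjective onto $\C^{2d}$ after restricting to the graph of $H_{\max}$ equipped with the graph norm $\norm{f}^2 + \norm{H_{\max}f}^2$, which is equivalent to the $H^2$ norm since $H_{\max} = -d^2/dx^2$ is elliptic on each interval; surjectivity of $\gamma$ then follows by exhibiting, on each edge $[0,\ell_e]$, $H^2$ functions realizing arbitrary prescribed values and derivatives at the two endpoints (e.g.\ cubic polynomials). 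The rest is the soft linear-algebra-in-Hilbert-space argument above, together with the standard identification (recalled in the excerpt) of graph continuity with norm resolvent continuity in the self-adjoint case, which applies once the $L(t)$ are additionally assumed to satisfy the self-adjointness requirements.
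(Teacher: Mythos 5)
Your proposal is correct and follows essentially the same route as the paper: both realize the graph of $H_L$ as the preimage of $L$ under the (bounded, surjective) trace map defined on the graph of the maximal operator, and deduce gap continuity from surjectivity. Your decomposition $\Phi^{-1}(L)=\ker\Phi\oplus\bigl((\ker\Phi)^{\perp}\cap\Phi^{-1}(L)\bigr)$ simply spells out the step the paper leaves implicit in the phrase ``since the trace map is surjective, $G_L$ depends continuously on $L$.''
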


\begin{proof}
Let $H\ma$ and $H\mi=H\ma^*$ be the maximal and minimal operators 
corresponding to the differential expression $H$,
with $\dom(H\ma) = H^2(\Gamma)$ and $\dom(H\mi)$ being the set of functions $f\in H^2(\Gamma)$ 
vanishing on the ends of all edges together with their first derivatives.
Let $G\ma$ and $G\mi$ be the graphs of $H\ma$ and $H\mi$ correspondingly; 
they are closed subspaces of the Hilbert space $L^2(\Gamma)\oplus L^2(\Gamma)$.
The quotient $\C^{2d} = \dom(H\ma)/\dom(H\mi)$ can be identified, as a vector space, 
with the quotient $G\ma/G\mi$.

The graph $G_L$ of $H_L$ is a closed subspace of $G\ma$;
it is obtained as the inverse image of $L\subset\C^{2d}$ under the trace map $G\ma\to G\ma/G\mi = \C^{2d}$.
Since the trace map is surjective, $G_L$ depends continuously of $L$ as a closed subspace of $G\ma$
and thus also as a closed subspace of $L^2(\Gamma)\oplus L^2(\Gamma)$.
\end{proof}

Let $L(t)$, $t\in[a,b]$ be a continuous family of self-adjoint boundary conditions for $H$. 
By the proposition above, the family $H_{L(t)}$ is graph continuous, so its spectral flow through an arbitrary
level $\mu\in\R$ is well defined. 

\begin{prop}
\label{prop:sf=Mor} 
Suppose that the family $H_{L(t)}$ is uniformly bounded from below. Then 
\begin{equation}\label{eq:sf=Mor-Mor}
\spf_{\mu}\br{H_{L(t)}}_{t\in[a,b]} = \Mor\br{H_{L(a)}-\mu}-\Mor\br{H_{L(b)}-\mu}
\end{equation}
for every $\mu\in\R$.
\end{prop}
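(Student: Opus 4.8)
The plan is to reduce the statement to the classical fact that for a fixed operator bounded from below, $\Mor(H_{L}-\mu)$ equals the number of eigenvalues below $\mu$, and to relate the spectral flow to the net change of this count along the path. First I would observe that by hypothesis there is a common lower bound $c\in\R$ with $H_{L(t)}\geq c$ for all $t\in[a,b]$; hence every eigenvalue curve of the family lies in $[c,\infty)$, and in particular only finitely many eigenvalues of any $H_{L(t)}$ lie below a given $\mu$, so $\Mor(H_{L(t)}-\mu)<\infty$ throughout. Choosing the level $\mu$ fixed, I would use the additivity of $\spf_{\mu}$ under concatenation (property (2) in the list of spectral flow properties) together with the definition \eqref{eq:sf-definition} to localize the computation on a partition $a=t_{0}<t_{1}<\dots<t_{N}=b$ and numbers $\eps_{j}>0$ with $\mu+\eps_{j}\notin\spec{H_{L(t)}}$ for $t\in[t_{j-1},t_{j}]$.

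Next I would argue that on each subinterval $[t_{j-1},t_{j}]$ the level $\mu+\eps_{j}$ stays in the resolvent set, so the spectral projection $\one_{(-\infty,\mu+\eps_{j}]}(H_{L(t)})$ has constant finite rank there (this is where graph continuity, guaranteed by Proposition~\ref{prop:HL-cont}, and the uniform lower bound are both used — the first to keep the finitely many eigenvalues below $\mu+\eps_j$ varying continuously and none escaping to $+\infty$ across the gap, the second to keep that number finite). Consequently
\[
\rank\one_{(-\infty,\,\mu+\eps_{j}]}\br{H_{L(t_{j})}} = \rank\one_{(-\infty,\,\mu+\eps_{j}]}\br{H_{L(t_{j-1})}},
\]
and since $\mu+\eps_{j}$ is not in the spectrum,
\[
\rank\one_{[\mu,\,\mu+\eps_{j}]}\br{H_{L(t_{j})}} = \rank\one_{(-\infty,\,\mu+\eps_{j}]}\br{H_{L(t_{j})}} - \Mor\br{H_{L(t_{j})}-\mu},
\]
with the analogous identity at $t_{j-1}$. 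Substituting these into \eqref{eq:sf-definition}, the common rank terms cancel within each summand and the sum telescopes, yielding $\spf_{\mu}(H_{L(t)})_{t\in[a,b]}=\Mor(H_{L(a)}-\mu)-\Mor(H_{L(b)}-\mu)$. (A small point to handle cleanly: the version of the spectral flow used here has closed intervals $[\mu,\mu+\eps_j]$, so one must check whether $\mu$ itself is an eigenvalue at the endpoints; if it is, one either notes that $\Mor(H_L-\mu)$ counts strictly negative eigenvalues of $H_L-\mu$ and the boundary term $\dim\ker(H_{L(b)}-\mu)-\dim\ker(H_{L(a)}-\mu)$ from the remark after the definition accounts for the discrepancy, or one simply observes that for generic $\mu$ the formula holds and both sides are appropriately semicontinuous.)

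The main obstacle is the careful bookkeeping at the spectral level $\mu$ when $\mu$ is an eigenvalue of $H_{L(a)}$ or $H_{L(b)}$: the two standard conventions for the spectral flow differ precisely by $\dim\ker(H_{L(b)}-\mu)-\dim\ker(H_{L(a)}-\mu)$, and $\Mor(H_L-\mu)$ (the count of \emph{negative} eigenvalues of $H_L-\mu$, excluding the kernel) must be matched against the right convention. Everything else — the finiteness of the Morse index, the telescoping, the independence of the partition — is a direct consequence of the uniform lower bound, graph continuity, and the definition of $\spf_{\mu}$, so I expect the proof to be short once this convention issue is pinned down.
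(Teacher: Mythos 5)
Your proof is correct and follows essentially the same route as the paper's: fix a common lower bound, use graph continuity to get constancy of the rank of the spectral projection onto the part of the spectrum below $\mu+\eps_{j}$ on each subinterval, and telescope. The convention issue you flag at the end is moot: since $\Mor\br{H_{L}-\mu}=\rank\one_{(-\infty,\mu)}\br{H_{L}}$ counts strictly negative eigenvalues, your identity $\rank\one_{[\mu,\,\mu+\eps_{j}]}=\rank\one_{(-\infty,\,\mu+\eps_{j}]}-\Mor\br{H_{L}-\mu}$ holds whether or not $\mu$ is an eigenvalue of the endpoint operators, so the closed-interval definition of $\spf_{\mu}$ yields \eqref{eq:sf=Mor-Mor} exactly as stated, with no correction term.
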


\begin{proof}
Let us write $H_t$ instead of $H_{L(t)}$ for brevity.
Let $c\in\R$ be less then both $\mu$ and the common lower bound of $H_t$, $t\in[a,b]$. 
Choose $t_j$ and $\eps_j$, $j=1,\ldots,N$ as in the definition of the spectral flow.
For $t\in\brr{t_{j-1},t_{j}}$ both $c$ and $\mu+\eps_{j}$ belong to the resolvent set of $H_t$,
so by \cite[thm.~IV.3.16]{Kato_perturbation} the spectral projection $\one_{[c,\,\mu+\eps_{j}]}(H_t)$ 
depends continuously on $t$. 
In particular, its rank is constant on this subinterval, so 
\[ \rank \one_{[\mu,\,\mu+\eps_{j}]}\br{H_{t_{j}}}
	 -\rank \one_{[\mu,\,\mu+\eps_{j}]}\br{H_{t_{j-1}}}
	 = \rank\one_{[c,\mu)}\br{H_{t_{j-1}}}-\rank\one_{[c,\mu)}\br{H_{t_{j}}}. \]
Summation of these equalities for $j=1,\ldots,N$ provides
\[ \spf_{\mu}\br{H_{L(t)}}_{t\in[a,b]} = \rank\one_{[c,\mu)}\br{H_a} - \rank\one_{[c,\mu)}\br{H_b} 
		= \Mor\br{H_{a}-\mu}-\Mor\br{H_{b}-\mu}, \]
which proves the proposition.
\end{proof}

\begin{rem*}
Formula \eqref{eq:sf=Mor-Mor} for smooth paths of boundary conditions appeared in \cite[thm 3.3]{LatSuk_ams20}.
Note that formula \eqref{eq:sf=Mor-Mor} does not hold without the assumption of uniform
boundedness from below, and such an assumption is not explicitly stated
in loc.~cit., though it is actually implicitly used in the proof.
As the first part of Theorem \ref{thm:sf-HBa-path} shows, this assumption cannot be omitted:
\eqref{eq:sf=Mor-Mor} fails for the family $\HBa(t)$ and the interval $[a,b]=\ii$. 
\end{rem*}


\begin{prop}\label{prop:HBa-cont}
The map $S^{1}\times\RR\ni(\alpha,t)\mapsto \HBa(t)$ is graph continuous.
\end{prop}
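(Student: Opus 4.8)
The plan is to reduce the claim to the graph-continuity of a family of boundary-condition subspaces $L_\alpha^{\B}(t)\subset\C^{2d}$ and then apply Proposition~\ref{prop:HL-cont}. By that proposition, $H_L$ depends graph-continuously on $L$, and composition of continuous maps is continuous; so it suffices to show that the map $S^1\times\RR\ni(\alpha,t)\mapsto L_\alpha^{\B}(t)$ is continuous in the gap topology on closed subspaces of $\C^{2d}$. Here $L_\alpha^{\B}(t)$ is the subspace of $\C^{2d}$ encoding the $\delta_\alpha(t)$ conditions \eqref{eq:delta}--\eqref{eq:delta'} at the vertices $v\in\B$ together with the (fixed, $t$- and $\alpha$-independent) Neumann-Kirchhoff conditions at $v\notin\B$. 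Since $L_\alpha^{\B}(t)$ splits as a direct sum of vertex conditions and the summands over $v\notin\B$ are constant, everything comes down to the vertex factor $L_\alpha^{v}(t)\subset\C^2\oplus\C^2$ for a single degree-two vertex $v$.

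Next I would write $L_\alpha^{v}(t)$ explicitly. In the coordinates $\bigl(\tau_0 f(v_-),\tau_0 f(v_+),\tau_0' f(v_-),\tau_0' f(v_+)\bigr)$, the pair $(\tra,\tra')$ is obtained from $(\tro,\tro')$ by applying the rotation $T_\alpha$ of \eqref{eq: mixed trace map def} on each side, and this depends real-analytically (in particular continuously) on $\alpha$. So it is enough to handle the ``standard'' picture $\alpha=0$, i.e. the classical $\delta(t)$ condition, and then transport it by the continuously varying invertible linear map $T_\alpha\oplus T_\alpha$; continuity of a subspace under a continuously varying invertible linear map is immediate from the gap-metric description (the image of a continuous family of projections under conjugation by a continuous family of isomorphisms, suitably orthogonalized, is continuous — equivalently, one may simply note $L_\alpha^v(t)=(T_{-\alpha}\oplus T_{-\alpha})\,L_0^v(t)$ as subspaces and invoke that $\GL$ acts continuously on the Grassmannian). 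Thus the whole problem is the continuity of $t\mapsto L_0^v(t)\subset\C^4$ for $t\in\RR=\R\cup\{\infty\}$.

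For the $\delta(t)$ family this is standard, but I would include the argument for completeness, treating $t\in\R$ and the point $t=\infty$ separately. For $t$ finite, $L_0^v(t)$ is cut out by the two equations $\tau_0 f(v_+)=\tau_0 f(v_-)$ and $\tau_0' f(v_+)-\tau_0' f(v_-)=t\,\tau_0 f(v_-)$, whose coefficient matrix depends polynomially (hence continuously) on $t$ and has constant rank $2$; so the kernel, a $2$-plane, varies continuously in $t\in\R$. Near $t=\infty$ one rescales: dividing the second equation by $t$ and setting $s=1/t$, the conditions become $\tau_0 f(v_+)=\tau_0 f(v_-)$ and $s\bigl(\tau_0' f(v_+)-\tau_0' f(v_-)\bigr)=\tau_0 f(v_-)$, which at $s=0$ give exactly \eqref{eq: delta-inf}, namely $\tau_0 f(v_\pm)=0$; again the coefficient matrix has constant rank $2$ for all $s$ in a neighbourhood of $0$, so $t\mapsto L_0^v(t)$ extends continuously to $t=\infty$. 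This establishes continuity of $t\mapsto L_0^v(t)$ on all of $\RR$. Joint continuity in $(\alpha,t)$ then follows since the map factors as $(\alpha,t)\mapsto L_0^v(t)$ followed by the jointly continuous action $(\alpha,L)\mapsto(T_{-\alpha}\oplus T_{-\alpha})L$, and finally we compose with $L\mapsto H_{L_\alpha^\B(t)}$ from Proposition~\ref{prop:HL-cont}.

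The only mildly delicate point — and the one I would be most careful about — is the behaviour at the special point $t=\infty$ (equivalently $s=0$): one must verify that the rescaled coefficient matrix does not drop rank there, so that the kernel genuinely varies continuously across $t=\infty$ and the resulting subspace agrees with the interpretation \eqref{eq: delta-inf}. Everything else is a routine constant-rank/continuous-family-of-kernels argument together with the observation that rotating the trace pair is a continuous (indeed analytic) operation in $\alpha$.
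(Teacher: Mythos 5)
Your proposal is correct and takes essentially the same route as the paper: both reduce to gap-continuity of the vertex subspace $L^v\subset\C^4$ cut out by the two linear forms \eqref{eq:delta}--\eqref{eq:delta'}, which have constant rank and depend continuously on $(\alpha,t)$ (passing to the coordinate $t^{-1}$ near $t=\infty$), and then invoke Proposition~\ref{prop:HL-cont}. Your extra reduction to $\alpha=0$ by transporting the subspace with $T_{-\alpha}\oplus T_{-\alpha}$ is a harmless refinement of the same argument, not a different method.
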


\begin{proof}
Let $L^{v}$ be the subspace of $\C^{4}$ given by the $\del(t)$ condition at $v\in\B$. 
It is defined by formulas (\ref{eq:delta}--\ref{eq:delta'}) 
as the common kernel of two linearly independent linear forms on $\C^{4}$. 
Each of these forms depends continuously on $(\alpha,t)$
(near $t=\infty$ one needs to pass to another coordinate $t^{-1}$, 
on which the corresponding forms still depend continuously). 
Therefore, $L^{v}$ also depends continuously on $\br{\alpha,t}$. The
vertex boundary conditions at other vertices are independent of parameters.
It remains to apply Proposition \ref{prop:HL-cont}.
\end{proof}

By this proposition, the spectral flow of $\HBa(t)$ through $\mu$ 
is well defined for $t$ running in any subinterval of $\RR$.  
We next use the spectral flow to compare the spectral counts of the original operator $H$ 
and of the decoupled operator $\HBa(\infty)$.

\begin{prop}\label{prop:sf=Mor-1} 
For every $\mu\in\R$ and $\alpha$, the following equality holds: 
\[
\Mor\br{H-\mu}-\Mor\br{\HBa(\infty)-\mu}=\begin{cases}
\spf_{\mu}\br{\HBa(t)}_{t\in\oi} & \text{for }\alpha=0\mod\pi,\\
-\spf_{\mu}\br{\HBa(t)}_{t\in\io} & \text{for }\alpha\neq0\mod\pi.
\end{cases}
\]
\end{prop}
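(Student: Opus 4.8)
The plan is to use Proposition \ref{prop:sf=Mor} together with the monotonicity and boundedness-from-below results of Section \ref{sec:delta-s-properties}, being careful about the two cases where the family fails to be uniformly bounded from below on all of $\RR$.

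First I would treat the case $\alpha = 0 \mod \pi$. By Proposition \ref{prop:bdd-below}, for every $\eps > 0$ the family $\HBa(t)$ is uniformly bounded from below on $\RR \setminus (-\infty, -\eps^{-1})$. I would choose $\eps$ small enough that $-\eps^{-1}$ is smaller than any of the finitely many spectral curves of $\HBa(t)$ that tend to $-\infty$ ever reach below a given threshold; more precisely, since by Proposition \ref{prop:monotonicity} there are exactly $|\B|$ strictly increasing spectral curves and all other curves are horizontal and bounded below, the family is uniformly bounded from below on $[-\eps^{-1}, \infty]$ for $\eps$ small enough. Then Proposition \ref{prop:sf=Mor} applies to the interval $[-\eps^{-1}, \infty]$, giving
\[
\spf_{\mu}\br{\HBa(t)}_{t\in[-\eps^{-1},\infty]} = \Mor\br{\HBa(-\eps^{-1})-\mu} - \Mor\br{\HBa(\infty)-\mu}.
\]
Next I would use monotonicity: on the interval $[-\infty, -\eps^{-1}]$ every non-horizontal spectral curve is strictly increasing, and (for $\mu$ chosen as above, or using homotopy invariance and the fact that $\mu$ is in the resolvent set of $H$ and $\HBa(\infty)$) no spectral curve crosses level $\mu$ there, since such curves are all below $\mu$ on that sub-interval. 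Hence $\spf_{\mu}\br{\HBa(t)}_{t\in[-\infty,-\eps^{-1}]} = 0$, and by additivity of the spectral flow under concatenation, $\spf_{\mu}\br{\HBa(t)}_{t\in\oi} = \spf_{\mu}\br{\HBa(t)}_{t\in[-\eps^{-1},\infty]}$. Combining with $\HBa(0) = H$ and rewriting $[-\eps^{-1},\infty]$ as the concatenation of $[-\eps^{-1},0]$ and $\oi$ — and noting that on $[-\eps^{-1},0]$ all increasing curves stay below $\mu$ — gives $\Mor\br{\HBa(-\eps^{-1})-\mu} = \Mor\br{H-\mu}$ at level $\mu$, so the claimed formula follows. (Alternatively and more cleanly: the spectral flow along $[-\infty,0] = \io$ through $\mu$ vanishes because all increasing curves lie below $\mu$ on the whole of $\io$ when $\alpha = 0$, by Proposition \ref{prop:monotonicity} and the choice of $\mu$; then additivity over the loop $\RR$ and Proposition \ref{prop:sf=Mor} applied where it is valid gives the result.)

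For the case $\alpha \neq 0 \mod \pi$ the argument is symmetric under the role-reversal noted in the excerpt: by Proposition \ref{prop:bdd-below} the family is uniformly bounded from below on the complement of $(0,\eps)$, the special point is now $t = 0$ rather than $t = \infty$, and the runaway spectral curves occur as $t \to +0$. I would apply Proposition \ref{prop:sf=Mor} to the interval $\io = [-\infty, 0]$ (on which the family is uniformly bounded from below for $\eps$ small, after possibly shrinking the interval near $0$ and using that the endpoint $t=0$ is $H$), obtaining $\spf_{\mu}\br{\HBa(t)}_{t\in\io} = \Mor\br{\HBa(-\infty)-\mu} - \Mor\br{H-\mu}$; and then observe that $\HBa(-\infty) = \HBa(\infty)$ as operators (both are the decoupled operator, since $t=\pm\infty$ is the same point of $\RR$), which yields the stated equation with the minus sign. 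The main obstacle I expect is the bookkeeping near the bad parameter value — verifying that for $\eps$ small enough the chosen sub-interval genuinely is uniformly bounded from below (which requires knowing the $|\B|$ escaping curves are the only obstruction, from Proposition \ref{prop:monotonicity}) and that the excluded piece of the loop contributes zero to the spectral flow through $\mu$; the rest is a direct application of Proposition \ref{prop:sf=Mor} plus additivity.
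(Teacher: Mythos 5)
Your treatment of the case $\alpha\neq 0\mod\pi$ is correct and is essentially the paper's argument: $\io$ lies in the complement of $(0,\eps)$, so Proposition \ref{prop:bdd-below} gives uniform boundedness from below on all of $\io$ (no shrinking of the interval near $t=0$ is needed), Proposition \ref{prop:sf=Mor} applies there, and the identification $\HBa(-\infty)=\HBa(\infty)$ produces the sign.

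The case $\alpha=0\mod\pi$ is where your argument breaks. The intended fix is one line: $\oi=[0,\infty]$ is itself contained in $[-\eps^{-1},\infty]$, the complement of $(-\infty,-\eps^{-1})$, so the family is uniformly bounded from below on $\oi$ and Proposition \ref{prop:sf=Mor} applies \emph{directly} to the interval $\oi$, giving $\spf_{\mu}\br{\HBa(t)}_{t\in\oi}=\Mor\br{H-\mu}-\Mor\br{\HBa(\infty)-\mu}$ with no further work. Instead you detour through $[-\eps^{-1},\infty]$ and then try to discard the pieces $[-\infty,-\eps^{-1}]$ and $[-\eps^{-1},0]$ by asserting that all increasing spectral curves stay below $\mu$ for $t\le 0$, so that these pieces carry zero spectral flow. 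That assertion is false: by Theorem \ref{thm:sf-HBa-path}, $\spf_{\mu}\br{\HBa(t)}_{t\in\io}=\Pos\br{\Lambda^{\B}_{0}(\mu)}$ for $\alpha=0$, which is generically nonzero, i.e.\ increasing curves do cross the level $\mu$ at negative $t$. For the same reason your intermediate identity $\Mor\br{\HBa(-\eps^{-1})-\mu}=\Mor\br{H-\mu}$ fails in general (the two sides differ by the number of crossings in $(-\eps^{-1},0]$). You also quietly assume $\mu$ lies in the resolvent sets of $H$ and $\HBa(\infty)$, which is not part of the statement. Your decomposition could be repaired by applying Proposition \ref{prop:sf=Mor} to the subinterval $[-\eps^{-1},0]$ as well and subtracting, but the direct application to $\oi$ makes the whole detour unnecessary.
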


\begin{proof}
Let $\alpha$ be fixed. By Proposition \ref{prop:bdd-below}, 
the family $\HBa(t)$ is uniformly bounded from below on the interval $t\in\oi$ if $\alpha=0$ 
and on the interval $t\in\io$ if $\alpha\neq0$. 
It remains to apply Proposition \ref{prop:sf=Mor}, taking into account that $H=\HBa(0)$
and that the values $t=\infty$ and $t=-\infty$ correspond to the same point of $\RR$.
\end{proof}

\section{Robin map and proof of Theorem \ref{thm:sf-HBa-path}}\label{sec:Robin-map}

In this section we deal with an arbitrary set $\B$ of degree two vertices of $\Gamma$. 

\begin{prop}\label{prop:f-Lambda} 
The Robin map $\LBa(\mu)\colon\C^{\B}\to\C^{\B}$ is well defined 
if and only if $\mu$ does not belong to the spectrum of $\HBa(\infty)$. 
If $\LBa(\mu)$ is well defined, then it is self-adjoint.
\end{prop}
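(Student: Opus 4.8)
The plan is to establish both claims from the defining boundary value problem \eqref{eq:Robin-bdry}. For the first claim, I would argue by standard linear algebra on finite-dimensional solution spaces. The map $\xi\mapsto f^{\xi}$ (when well defined) is linear, and the construction asks for existence and uniqueness of a solution to the inhomogeneous Robin-type problem for every $\xi\in\C^{\B}$. The homogeneous problem (i.e. $\xi=0$) is precisely the eigenvalue equation for $\HBa(\infty)$ at $\mu$: a function satisfying $-f''=\mu f$ on $\Gamma$, the Neumann-Kirchhoff conditions off $\B$, and $\tra f(\vpm)=0$ at each $v\in\B$ is by \eqref{eq: delta-inf} exactly an element of $\ker(\HBa(\infty)-\mu)$. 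So the homogeneous problem has only the trivial solution if and only if $\mu\notin\Spec\HBa(\infty)$. The remaining point is that triviality of the homogeneous problem forces existence for all $\xi$; this follows because the affine space of solutions of $-f''=\mu f$ with Neumann-Kirchhoff conditions off $\B$ maps linearly into $\C^{\B}$ via $f\mapsto\tra f(\vm)$ (noting $\tra f(\vp)=\tra f(\vm)$ need not hold a priori — one should set it up carefully, perhaps via the pair of traces $\tra f(\vm),\tra f(\vp)$ and a dimension count), and injectivity of the relevant linear map on a finite-dimensional space implies surjectivity once the dimensions are shown to match. I would phrase this cleanly using the deficiency-space/trace formalism already present in Proposition \ref{prop:HL-cont}.

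For the self-adjointness of $\LBa(\mu)$, the natural route is the Green's identity \eqref{eq:Hfg-mixed} established in the proof of Proposition \ref{prop:HBa-self-adjoint}. Given $\xi,\eta\in\C^{\B}$, let $f=f^{\xi}$ and $g=f^{\eta}$. Both satisfy $-f''=\mu f$, $-g''=\mu g$ on $\Gamma$ and the Neumann-Kirchhoff conditions off $\B$, so $\bra{Hf,g}_{\Gamma}-\bra{f,Hg}_{\Gamma}=\mu\bra{f,g}_{\Gamma}-\mu\bra{f,g}_{\Gamma}=0$ (using that eigenfunctions are real, or taking care with conjugates). Moreover $f$ and $g$ satisfy $\tra f(\vp)=\tra f(\vm)=\xi(v)$ and likewise for $g$ with $\eta$, so \eqref{eq:Hfg-mixed} applies and gives
\[
0 = \sum_{v\in\B}\br{\bra{\xi(v),\,\tra'g(\vp)-\tra'g(\vm)} - \bra{\tra'f(\vp)-\tra'f(\vm),\,\eta(v)}}.
\]
By the definition \eqref{eq:global-Robin}, $\tra'g(\vm)-\tra'g(\vp)=(\Lambda\eta)(v)$ and $\tra'f(\vm)-\tra'f(\vp)=(\Lambda\xi)(v)$, so the displayed identity reads $-\bra{\xi,\Lambda\eta}_{\C^{\B}}+\bra{\Lambda\xi,\eta}_{\C^{\B}}=0$, i.e. $\bra{\Lambda\xi,\eta}=\bra{\xi,\Lambda\eta}$, which is exactly self-adjointness of the (finite matrix) operator $\Lambda=\LBa(\mu)$ on $\C^{\B}$.

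The main obstacle I anticipate is the bookkeeping in the first claim: one must be precise that the solvability space has the right dimension so that injectivity yields a well-defined $f^{\xi}$ for every $\xi$, rather than merely for $\xi$ in some subspace. Concretely, the trace space at each degree-two vertex $v$ is four-dimensional ($f(\vpm),f'(\vpm)$), the $\del(\infty)$ condition $\tra f(\vpm)=0$ cuts it to two dimensions, and one has to verify that prescribing the common value $\tra f(\vp)=\tra f(\vm)=\xi(v)$ (one complex parameter) is compatible — i.e. the solution operator is really parametrized by $\C^{\B}$ and not by $\C^{\B}\oplus\C^{\B}$. This is where the sign/orientation conventions for $\vpm$ and the structure of \eqref{eq:Robin-bdry} matter, and I would spell it out via a short exact sequence of trace maps as in the proof of Proposition~\ref{prop:HL-cont}, or simply invoke that $f^{\xi}$ is the unique solution of the non-self-adjoint but well-posed boundary value problem once $\mu\notin\Spec\HBa(\infty)$. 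The self-adjointness part, by contrast, is essentially immediate once \eqref{eq:Hfg-mixed} is in hand.
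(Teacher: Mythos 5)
Your proposal is correct and follows essentially the same route as the paper: the first claim is the observation that the homogeneous problem ($\xi=0$) is exactly $\ker\br{\HBa(\infty)-\mu}$, and self-adjointness follows by substituting $f=f^{\xi}$, $g=f^{\eta}$ into the Green's identity \eqref{eq:Hfg-mixed}, with the signs working out as you computed. The extra care you take about existence-for-all-$\xi$ following from uniqueness is a reasonable point, but the paper folds that into the definition of the Robin map and does not re-derive it here.
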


\begin{proof}
For $\xi=0$ the boundary value problem \eqref{eq:Robin-bdry} 
describes precisely the kernel of $\HBa(\infty)-\mu$.
This proves the first statement of the proposition. 

Let $\xi,\eta$ be arbitrary vectors in $\C^{\B}$ and $f^\xi$, $f^\eta$ be the solutions 
of the corresponding boundary value problems \eqref{eq:Robin-bdry}.
Substituting $f=f^\xi$, $g=f^\eta$ in \eqref{eq:Hfg-mixed}, we obtain 
\[ \bra{\xi,\LBa(\mu)\,\eta}-\bra{\LBa(\mu)\xi,\eta} 
= \bra{Hf^\xi,f^\eta}-\bra{f^\xi,Hf^\eta} = \bra{\mu f^\xi,f^\eta}-\bra{f^\xi,\mu f^\eta} = 0, \]
which proves the self-adjointness of the Robin map.
\end{proof}

\begin{prop}\label{prop:DTN-correspondence} 
Let $\mu\notin\spec{\HBa(\infty)}$ and $t\in\R$. Then 
\begin{equation}\label{eq:dimker=}
\dim\ker\br{\HBa(t)-\mu}=\dim\ker\br{\LBa(\mu)+t}.
\end{equation}
In particular, $\dim\ker\br{H-\mu}=\dim\ker\LBa(\mu)$,
so $\LBa(\mu)$ is invertible for $\mu\notin\spec{H}$.
\end{prop}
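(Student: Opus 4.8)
The plan is to establish a bijective correspondence between the kernel of $\HBa(t)-\mu$ and the kernel of $\LBa(\mu)+t$ by passing through the boundary value problem that defines the Robin map. First I would note that since $\mu \notin \spec{\HBa(\infty)}$, the Robin map $\LBa(\mu)$ is well defined by Proposition \ref{prop:f-Lambda}, so for every $\xi \in \C^\B$ there is a unique solution $f^\xi$ of \eqref{eq:Robin-bdry} satisfying $-f'' = \mu f$, the continuity condition $\tra f^\xi(\vp) = \tra f^\xi(\vm) = \xi(v)$ at each $v\in\B$, and Neumann--Kirchhoff conditions at all $v \notin \B$.

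The key step is to unwind what it means for such an $f^\xi$ to additionally lie in $\ker(\HBa(t)-\mu)$. Given $\xi \in \ker(\LBa(\mu)+t)$, the function $f^\xi$ already satisfies $-f'' = \mu f$, the conditions at vertices outside $\B$, and the first $\del(t)$ condition \eqref{eq:delta} at each $v\in\B$ (this is built into \eqref{eq:Robin-bdry}); the condition $(\LBa(\mu)+t)\xi = 0$ means exactly $\tra'f^\xi(\vm)-\tra'f^\xi(\vp) = -t\,\xi(v) = -t\,\tra f^\xi(\vm)$, which is the second $\del(t)$ condition \eqref{eq:delta'}. Hence $f^\xi \in \ker(\HBa(t)-\mu)$, and the map $\xi \mapsto f^\xi$ is linear and injective. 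Conversely, given $f \in \ker(\HBa(t)-\mu)$, define $\xi(v) = \tra f(\vm) = \tra f(\vp)$ (equal by \eqref{eq:delta}); then $f$ solves the boundary value problem \eqref{eq:Robin-bdry} for this $\xi$, so by uniqueness $f = f^\xi$, and reversing the computation above shows $(\LBa(\mu)+t)\xi = 0$. Thus $\xi \mapsto f^\xi$ is a linear isomorphism $\ker(\LBa(\mu)+t) \to \ker(\HBa(t)-\mu)$, giving \eqref{eq:dimker=}.

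The second assertion follows by specialization: taking $t = 0$ gives $\dim\ker(H-\mu) = \dim\ker(\HBa(0)-\mu) = \dim\ker\LBa(\mu)$ since $H = \HBa(0)$; and if $\mu \notin \spec{H}$ then $\dim\ker\LBa(\mu) = 0$, so the self-adjoint operator $\LBa(\mu)$ is invertible.

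I do not anticipate a serious obstacle; the argument is essentially bookkeeping with the boundary conditions, provided one is careful that the uniqueness of the solution $f^\xi$ of \eqref{eq:Robin-bdry} (guaranteed by $\mu \notin \spec{\HBa(\infty)}$) is what makes the correspondence well defined and injective in the reverse direction. The only mild subtlety is keeping the signs in the definition \eqref{eq:global-Robin} of $\LBa(\mu)$ consistent with the sign of $t$ in \eqref{eq:delta'}, which is why the kernel on the right-hand side of \eqref{eq:dimker=} is that of $\LBa(\mu)+t$ rather than $\LBa(\mu)-t$.
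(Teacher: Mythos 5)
Your proof is correct and follows essentially the same route as the paper: both directions translate the $\del(t)$ condition \eqref{eq:delta'} into the eigenvalue equation $\LBa(\mu)\xi=-t\xi$ via the unique solvability of the boundary value problem \eqref{eq:Robin-bdry}, and your sign bookkeeping matches the paper's. Your phrasing of the correspondence as an explicit linear isomorphism between the two kernels is a slightly cleaner way of stating what the paper calls a one-to-one correspondence, but it is the same argument.
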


\begin{proof}
Suppose that $\mu$ is an eigenvalue of $\HBa(t)$ with an eigenfunction $f$.
Let $\xi\in\C^{\B}$ be the $\tra $-trace
of $f$ at $\B$. The assumption $\mu\notin\spec{\HBa(\infty)}$
implies that $\xi\neq0$ (otherwise $f$ would belong to the domain
of $\HBa(\infty)$ and thus be its eigenfunction).
For every $v\in\B$ we get
\begin{equation*}
\br{\LBa(\mu)\xi}(v)=\tra'f(\vm)-\tra'f(\vp)=-t\cdot\tra f(v)=-t\xi(v),
\end{equation*}
so $-t$ is an eigenvalue of $\LBa(\mu)$ with eigenvector $\xi$. 

Conversely, assume that $-t$ is an eigenvalue of $\LBa(\mu)$
with an eigenvector $\xi\in\C^{\B}$, that is, $\LBa(\mu)\xi=-t\xi$.
Since $\mu\notin\spec{\HBa(\infty)}$, the boundary
value problem \eqref{eq:Robin-bdry}, with $\lambda$ replaced by
$\mu$, has an unique nontrivial solution $f$. 
For every $v\in\B$ we have 
\begin{equation*}
\tra'f(\vp)-\tra'f(\vm) = \br{-\LBa(\mu)\xi}(v)=t\xi(v)=t\cdot\tra f(v),
\end{equation*}
so $f$ belongs to the domain of $\HBa(t)$
and is its eigenfunction with eigenvalue $\mu$. 

The reasoning above shows that the eigenfunctions $f$ and the eigenvectors
$\xi$ are in one-to-one correspondence, which implies equality \eqref{eq:dimker=}.
Substituting $t=0$ in \eqref{eq:dimker=}, we obtain the last statement of the proposition. 
\end{proof}

\subsection*{Proof of Theorem \ref{thm:sf-HBa-path}. }

By Proposition \ref{prop:monotonicity}, every spectral curve of $\HBa(t)$ 
is either constant or strictly increasing. 
Therefore, for every interval $[a,b]\subset\RR$ the spectral flow of $\HBa(t)$, $t\in[a,b]$ through $\mu$ 
is equal to the total number of intersections of the level $\mu$
by increasing spectral curves as $t$ is running the interval $(a,b]$.
(An increasing spectral curve $\lambda(t)$ with $\lambda(a)=\mu$,
resp. $\lambda(b)=\mu$ contributes 0, resp. 1 to the spectral flow).

The spectral flow of a loop does not depend on the choice of a spectral level $\mu$; 
let us choose $\mu\in\R$ outside of the spectrum of $\HBa(\infty)$. 
Then the spectral flow through $\mu$ of $\HBa(t)$ with $t$ running $\RR$ (or, what is the same, $\ii$)
is equal to the total number of intersections of the level $\mu$ by increasing spectral curves for $t\in\R$. 
By Proposition \ref{prop:DTN-correspondence}, this number is equal to the total number 
of real eigenvalues of $\LBa(\mu)$, counted with multiplicities. 
The operator $\LBa(\mu)$ is self-adjoint, so this number is equal to the dimension 
of the vector space $\C^{\B}$ on which $\LBa(\mu)$ acts. 
This proves the first statement of the theorem.

Let now $\mu\in\R$ lie outside of the spectrum of both $H$ and $\HBa(\infty)$.
Then the spectral flow through $\mu$ for $t$ running $\io$ is equal 
to the number of intersections of the level $\mu$ by increasing spectral curves for $t\in(-\infty,0)$,
which in turn is equal to $\Pos\br{\LBa(\mu)}$ by Proposition \ref{prop:DTN-correspondence}.
Similarly, the spectral flow along $\oi$ is equal to the number of intersections of level $\mu$ 
for $t$ running $(0,\infty)$, which is equal to $\Mor\br{\LBa(\mu)}$.
This completes the proof of the theorem.
\qed\vspace{3pt}

\begin{rem*}
If $\mu\in\spec{H}$, then the right hand side of \eqref{eq:sf=Pos} should be increased by $\dim\ker\br{H-\mu}$.
\end{rem*}

\section{Proof of Theorems \ref{thm:nodal-def}, \ref{thm:Robin-def}, \ref{thm:Mor-Robin}, and \ref{thm:sf=sf}}
\label{sec:Proof-of-index}

\subsection*{Betti number.}

The first Betti number (the number of independent cycles) of a connected
graph $\Gamma$ is given by the formula 
\[ \beta(\Gamma)=\abs{\E}-\abs{\V}+1. \]
The last summand $1$ in this formula stands for the \q{one connected component} of $\Gamma$; 
for a non-connected graph, this summand should be replaced by the number of connected components. 

Let $\B$ be a subset of $\V$ including only vertices of degree two
and $\Gamma_{\B}=\overline{\Gamma\backslash\B}$ be the graph obtained by cutting $\Gamma$ at points of $\B$. 
Cutting $\Gamma$ at one vertex of degree two increases the number of vertices
by one and leaves the number of edges unchanged, 
so $\abs{\V(\Gamma_{\B})}=\abs{\V}+\abs{\B}$ and $\abs{\E(\Gamma_{\B})}=\abs{\E}$. Therefore,
\begin{equation}\label{eq:beta-beta}
\beta(\Gamma)-\beta(\Gamma_{\B}) = \abs{\B}-\abs{\pi_{0}(\Gamma_{\B})}+1,
\end{equation}
where $\abs{\pi_{0}(\Gamma_{\B})}$ is the number of connected components of $\Gamma_{\B}$. 

If $\B=\Pa(f)$ is the set of $\alpha$-Robin points of $f$, 
then the connected components of the cutted graph
\[ \Gaf\coloneqq\Gamma_{\Pa(f)}=\overline{\Gamma\backslash\Pa(f)} \]
are $\alpha$-Robin domains of $f$, so 
\begin{equation}\label{eq:beta-beta_alpha}
\beta(\Gamma)-\beta(\Gaf) = \Paf-\nua(f)+1.
\end{equation}

\subsection*{Deformation at Robin points of an eigenfunction.}

Let $(\lambda,f)$ be an eigenpair of the Neumann-Kirchhoff Laplacian $H$.
The spectral counts $n(\lambda)$ and $N(\lambda)$ may be expressed 
in terms of the Morse indices of the shifted operator $H$:
\begin{equation}\label{eq:MorH}
	\Mor\br{H-(\lambda+\eps)} = N(\lambda) \txt{and} \Mor\br{H-(\lambda-\eps)} = n(\lambda)-1
\end{equation}
for $\eps>0$ small enough.
In this section we compare these two indices with the Morse indices of $\Ha^f(\infty)-(\lambda\pm\eps)$, 
where $\Ha^{f}(t)$ is the Laplacian 
with $\del(t)$ conditions at $\alpha$-Robin points of $f$ 
and Neumann-Kirchhoff conditions at all other vertices.

By the definition, each Robin point $v\in\Pa(f)$ is either an internal point of an edge 
or a vertex of degree two and $\tra f(\vp) = \tra f(\vm)=0$. 
Therefore, $f$ belongs to the domain of $\HBa(\infty)$ and is its eigenfunction. 
But $f$ is also an eigenfunction of $H=\Ha^f(0)$,
so Proposition \ref{prop:constant-curve} provides the following result.

\begin{prop}\label{lem:f-t} 
If $(\lambda,f)$ is an eigenpair of $H$, then it is also an eigenpair of $\Ha^f(\infty)$
and, moreover, of $\Ha^f(t)$ for every $t\in\RR$.
In other words, the family $\Ha^f(t)$ has a constant spectral curve on the level $\lambda$.
\end{prop}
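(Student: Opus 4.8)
The plan is to verify the two hypotheses of Proposition~\ref{prop:constant-curve} for the eigenfunction $f$, applied to the set $\B = \Pa(f)$. That proposition states: if a function is an eigenfunction of $\HBa(t)$ for two distinct values of $t$, then it is an eigenfunction for all $t\in\RR$ and generates a horizontal spectral curve. So it suffices to exhibit two distinct values of $t$ for which $f$ is an eigenfunction of $\Ha^f(t) = \Ha^{\Pa(f)}(t)$ with eigenvalue $\lambda$.

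First I would take $t = 0$. Since $\Ha^f(0) = H$ regardless of $\alpha$ (the $\del(0)$ condition reduces to the Neumann-Kirchhoff condition, as noted in the discussion of the $\HBa$ family), and $(\lambda,f)$ is by hypothesis an eigenpair of $H$, we immediately get that $f$ is an eigenfunction of $\Ha^f(0)$ with eigenvalue $\lambda$. Second I would take $t = \infty$. By the very definition of an $\alpha$-Robin point, each $v\in\Pa(f)$ is either an interior point of an edge or a degree-two vertex, and at each such point $\tra f(\vp) = \tra f(\vm) = 0$. Comparing with the interpretation \eqref{eq: delta-inf} of the $\del(\infty)$ condition, this says precisely that $f$ satisfies the $\del(\infty)$ condition at every point of $\B = \Pa(f)$; since $f$ also satisfies Neumann-Kirchhoff conditions at all remaining vertices (being an eigenfunction of $H$, and those conditions are untouched), $f$ lies in the domain of $\Ha^f(\infty)$ and $-f'' = \lambda f$, so $f$ is an eigenfunction of $\Ha^f(\infty)$ with eigenvalue $\lambda$.

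Having produced the two distinct parameter values $t=0$ and $t=\infty$ in $\RR$, I would invoke Proposition~\ref{prop:constant-curve} to conclude that $f$ is an eigenfunction of $\Ha^f(t)$ for every $t\in\RR$, with the constant eigenvalue $\lambda$, which is exactly the claim; the final sentence about the constant spectral curve is then just a restatement.

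I do not anticipate a genuine obstacle here: the proof is essentially bookkeeping that the hypotheses of Proposition~\ref{prop:constant-curve} hold. The one point requiring a small amount of care is the identification of the $\del(\infty)$ condition with the defining property $\tra f(\vpm)=0$ of Robin points, i.e. making sure that the extension of the notion of $\alpha$-Robin point to ``internal'' degree-two vertices (as discussed in the subsection on Robin points and Robin domains) is compatible with the convention that imposing Neumann-Kirchhoff at such inserted vertices does not change the operator, so that $f$ restricted appropriately genuinely lies in $\dom\Ha^f(\infty)$. Once that is granted, the argument is immediate.
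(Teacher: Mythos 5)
Your proof is correct and is essentially identical to the paper's: both exhibit $t=0$ (since $\Ha^f(0)=H$) and $t=\infty$ (since $\tra f(\vpm)=0$ at every $\alpha$-Robin point by definition) as the two distinct parameter values, and then invoke Proposition~\ref{prop:constant-curve}.
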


\subsection*{Decoupled operator. }

The vertex boundary condition of the operator $\HBa(\infty)$
at $v\in\B$ is given by the Robin condition at each side of $v$,
so the two sides of $v$ are in fact disconnected. Therefore, $\HBa(\infty)$
can be considered as the Laplacian on the graph $\Gamma_{\B}=\overline{\Gamma\backslash\B}$
obtained by cutting the original graph $\Gamma$ at points of $\B$.
In other words, $\HBa(\infty)$ is the orthogonal
sum of operators $H^{i}$ defined on the connected components $\Gamma_{\B}^{i}$
of $\Gamma_{\B}$:
\[ \Gamma_{\B}=\sqcup_{i}\Gamma_{\B}^{i} \quad \HBa(\infty)=\oplus_{i}H^{i}. \]
Each \q{internal} vertex of $\Gamma_{\B}^{i}$ is equipped with the Neumann-Kirchhoff boundary condition, 
while the \q{external} vertices of $\Gamma_{\B}^{i}$ (those that arise as a result of cutting)
have degree one and are equipped with the $\alpha$-Robin condition $\tra f(v)=0$. 

When $\B=\Pa(f)$ is the set of $\alpha$-Robin points of $f$,
the operator $\Ha^{f}(\infty)$ is defined on the disjoint union 
$\Ga = \sqcup_{i}\Ga^{i}$ of $\alpha$-Robin domains $\Ga^{i}$, $i=1,\ldots,\nua(f)$.
Denote by $\Ha^{i}$ the restriction of $\Ha^{f}(\infty)$ to $\Ga^{i}$; then
\begin{align}
\ker\br{\Ha^f(\infty)-\lambda} &= \bigoplus_{i=1}^{\nua}\ker\br{\Ha^{i}-\lambda},
\label{eq:dim-sum} \\
\Mor\br{\Ha^f(\infty)-\lambda} &= \sum_{i=1}^{\nua}\Mor\br{\Ha^{i}-\lambda}.
\label{eq:dim<sum}
\end{align}

\subsection*{Proof of Theorem \ref{thm:nodal-def} and Theorems \ref{thm:Mor-Robin}, \ref{thm:sf=sf} for $\alpha=0$.}

Let $(\lambda,f)$ be an eigenpair of the Neumann-Kirchhoff Laplacian $H$.
Then it is also an eigenpair of $H_0^f(\infty)$.
The restriction $f_{i}$ of $f$ to the nodal domain $\Gamma_{0}^{i}$ is an eigenfunction of $H_{0}^{i}$ 
which does not vanish at points other than vertices of degree one. 
It follows that $\lambda$ is a simple ground state of $H_{0}^{i}$, that is, 
$\dim\ker\br{H_{0}^{i}-\lambda}=1$ and $\Mor\br{H_{0}^{i}-\lambda}=0$. 
Substituting this in (\ref{eq:dim-sum}--\ref{eq:dim<sum}), we see that 
$\lambda$ is a ground state of $H_{0}^{f}(\infty)$ of multiplicity $\nu_{0}(f)$, that is, 
\begin{equation}\label{eq:ker-Mor-0}
\dim\ker\br{H_{0}^{f}(\infty)-\lambda} = \nu_{0}(f) \txt{and} \Mor\br{H_{0}^{f}(\infty)-\lambda}=0.
\end{equation}
Choose $\eps>0$ such that the intervals $[\lambda-\eps,\lambda)$ and $(\lambda,\lambda+\eps]$
belong to the resolvent set of both $H=H_{0}^{f}\br{0}$ and $H_{0}^{f}(\infty)$. 
Then \eqref{eq:ker-Mor-0} can be written as follows:
\begin{equation}\label{eq:Mor-0-pm}
\Mor\br{H_{0}^{f}(\infty)-(\lambda+\eps)} = \nu_{0}(f) \txt{and} \Mor\br{H_{0}^{f}(\infty)-(\lambda-\eps)}=0.
\end{equation}
Substituting \eqref{eq:MorH} and \eqref{eq:Mor-0-pm} in Proposition \ref{prop:sf=Mor-1},
we obtain 
\begin{equation}\label{eq:sf0lambda+-eps}
  \spf_{\lambda+\eps}\br{H_0^f(t)}_{t\in\oi} =  N(\lambda)-\nu_{0}(f) \txt{and}
  \spf_{\lambda-\eps}\br{H_0^f(t)}_{t\in\oi} = n(\lambda)-1. 
\end{equation}
By Propositions \ref{prop:f-Lambda} and \ref{prop:DTN-correspondence}, 
the DtN operators $\Lambda_{0}^{f}(\lambda\pm\eps)$ are well defined and invertible.
By the second part of Theorem \ref{thm:sf-HBa-path}, the Morse indices of these operators
are equal to the spectral flows along $\oi$, so 
\begin{equation}\label{eq:Mor0lambda+-eps}
 \Mor\br{\Lambda_{0}^{f}(\lambda+\eps)} = N(\lambda)-\nu_{0}(f) \txt{and}
 \Mor\br{\Lambda_{0}^{f}(\lambda-\eps)} = n(\lambda)-1,
\end{equation}
which proves the first part of Theorem \ref{thm:nodal-def}.
Since the sum of negative and positive indices of $\Lambda_{0}^{f}(\lambda-\eps)$ is equal to $\Pof$,
the second equality implies
\begin{equation}\label{eq:sf0=Pos:lambda-eps}
	\spf_{\lambda-\eps}\br{H_{0}^{f}(t)}_{t\in\io} = \Pos\br{\Lambda_{0}^{f}(\lambda-\eps)} = \Pof-n(\lambda)+1.
\end{equation}
Taking into account \eqref{eq:beta-beta_alpha}, we obtain
\[ n(\lambda)-\nu_{0}(f) = \Pof - \nu_{0}(f) + 1 - \Pos\br{\Lambda_0^{f}(\lambda-\eps)}
= \beta(\Gamma) - \beta(\Gamma_{0}) - \Pos\br{\Lambda_0^{f}(\lambda-\eps)}, \]
which completes the proof of Theorem \ref{thm:nodal-def}.

For $\alpha=0$ Theorem \ref{thm:sf=sf} is given by \eqref{eq:sf0=Pos:lambda-eps} 
and the first equality of \eqref{eq:sf0lambda+-eps},
while Theorem \ref{thm:Mor-Robin} is given by \eqref{eq:sf0=Pos:lambda-eps} 
and the first equality of \eqref{eq:Mor0lambda+-eps}.
\qed

\subsection*{Large values of $\lambda$ and proof of Corollary \ref{cor:nodal-def-1}.}

In the rest of the section we suppose that an eigenpair $(\lambda,f)$ satisfies Assumption \ref{assu:generic}. 
In particular, $\lambda>\br{\pi/\lmin}^{2}$. 
This condition on $\lambda$ implies that, for every $\alpha$, 
there is at least one $\alpha$-Robin point inside each edge of $\Gamma$. 
Therefore, each $\alpha$-Robin domain $\Ga^{i}$ is either an interval or a star graph,
$\beta(\Ga)=0$, and 
\begin{equation}\label{eq:beta=P-nu+1}
\beta(\Gamma) = \Paf-\nua(f)+1.
\end{equation}
In particular, $\beta(\Gamma_0)=0$ and $\beta(\Gamma) = \Pof-\nu_0(f)+1$. 
Substituting this in \eqref{eq:nodal-def}, we obtain Corollary \ref{cor:nodal-def-1}.

\subsection*{Proof of Theorems \ref{thm:Robin-def}, \ref{thm:Mor-Robin}, and \ref{thm:sf=sf} for $\alpha\neq 0$. }

We already proved these theorems for $\alpha=0$.
Suppose now that $\alpha\neq0\mod\pi$.
Let $(\lambda,f)$ be an eigenpair of $H$ satisfying  Assumption \ref{assu:generic}.
Then each $\alpha$-Robin domain $\Ga^{i}$ is either an interval or a star graph, as above,
and $f$ does not vanish at a vertex of degree larger than two. 
By \cite[cor. 3.1.9]{BerKuc_graphs}, the eigenvalue $\lambda$ of $\Ha^{i}$ is simple, 
so \cite[thm. 2.6]{Ber_cmp08} implies 
\[ \Mor\br{\Ha^{i}-\lambda}=\nu_{0}\br{f_{i}}-1=\abs{\P_{0}\br{f_{i}}}, \]
where $f_{i}$ is the restriction of $f$ to $\Ga^{i}$.
Since $\alpha\neq0$, the nodal set $\P_{0}(f)$ is disjoint from $\Pa(f)$ 
(otherwise $f$ and $f'$ would vanish simultaneously at some internal point
and thus on the whole edge, which contradicts assumption of the theorem),
so $\P_{0}(f)=\cup\P_{0}\br{f_{i}}$.
Therefore, (\ref{eq:dim-sum}--\ref{eq:dim<sum}) can be written in this case as
\begin{align}
 & \dim\ker\br{\Ha^{f}(\infty)-\lambda} = \sum_{i=1}^{\nua}\dim\ker\br{\Ha^{i}-\lambda} = \nua(f), 
 \label{eq:dimKer-alpha} \\
 & \Mor\br{\Ha^{f}(\infty)-\lambda} = \sum_{i=1}^{\nua}\Mor\br{\Ha^{i}-\lambda}
   = \sum_{i=1}^{\nua}\abs{\P_{0}\br{f_{i}}}=\Pof. \label{eq:dimMor-alpha}
\end{align}
The rest of the proof is similar to the proof of the case $\alpha=0$, 
but the roles of the intervals $\oi$ and $\io$ are interchanged.
Choose $\eps>0$ such that the intervals $[\lambda-\eps,\lambda)$ and $(\lambda,\lambda+\eps]$
belong to the resolvent set of both $H=\Ha^{f}\br{0}$ and $\Ha^{f}(\infty)$. 
Then (\ref{eq:dimKer-alpha}--\ref{eq:dimMor-alpha}) can be written as follows:
\begin{equation*}
\Mor\br{\Ha^f(\infty)-(\lambda+\eps)} = \Pof+\nua(f) \txt{and} \Mor\br{\Ha^f(\infty)-(\lambda-\eps)} = \Pof.
\end{equation*}
Substituting this and \eqref{eq:MorH} in Proposition \ref{prop:sf=Mor-1}, we obtain 
\begin{align}
& \spf_{\lambda+\eps}\Ha^f(t)_{t\in\io} = \Pof+\nua(f)-N(\lambda), \label{eq:sfa-lambda+eps} \\
& \spf_{\lambda-\eps}\Ha^f(t)_{t\in\io} = \Pof-n(\lambda)+1. \label{eq:sfa-lambda-eps} 
\end{align}
By Propositions \ref{prop:f-Lambda} and \ref{prop:DTN-correspondence}, 
the Robin operators $\La^f(\lambda\pm\eps)$ are well defined and invertible.
By the second part of Theorem \ref{thm:sf-HBa-path}, 
the positive indices of these operators are equal to the spectral flows along $\io$, 
so (\ref{eq:sfa-lambda+eps}--\ref{eq:sfa-lambda-eps}) imply
\begin{align}
& \Pos\br{\La^f(\lambda+\eps)} = \Pof+\nua(f)-N(\lambda), \label{eq:Pos-lambda+eps} \\
& \Pos\br{\La^f(\lambda-\eps)} = \Pof-n(\lambda)+1. \label{eq:Pos-lambda-eps}
\end{align}
Equality \eqref{eq:Pos-lambda+eps} is just another form of \eqref{eq:Robin-count-Pos}, 
so it proves Theorem \ref{thm:Robin-def} for $\alpha\neq 0$.

The sum of negative and positive indices of $\La^f(\lambda+\eps)$ is equal to $\Paf$,
so \eqref{eq:Pos-lambda+eps} can be written as
\begin{equation*}
	\Mor\br{\La^f(\lambda+\eps)} = \Paf-\Pof-\nua(f)+N(\lambda).
\end{equation*}
By \eqref{eq:beta=P-nu+1} $\Paf-\nua(f) = \Pof-\nu_0(f)$, hence
\begin{equation}\label{eq:sf=Mor:lambda+eps}
	\spf_{\lambda+\eps}\br{\Ha^f(t)}_{t\in\oi} = \Mor\br{\La^f(\lambda+\eps)} 
	  = N(\lambda)-\nu_0(f).
\end{equation}
Equalities \eqref{eq:sfa-lambda-eps} and \eqref{eq:sf=Mor:lambda+eps}
prove Theorem \ref{thm:sf=sf} for $\alpha\neq 0$,
while \eqref{eq:Pos-lambda-eps} and \eqref{eq:sf=Mor:lambda+eps}
prove Theorem \ref{thm:Mor-Robin} for $\alpha\neq 0$.
\qed

\section{Differences of Dirichlet, Robin, and Neumann counts}\label{sec:sf-DRN}

\subsection*{Paths.}

Let again $(\lambda,f)$ be an eigenpair of the Neumann-Kirchhoff Laplacian $H$.
There are two natural ways to connect the operator $H^f_0(\infty)=H^f_0(-\infty)$ 
with the Neumann-Kirchhoff Laplacian $H = H^f_0(0)$.
Both are given by the family $H^f_0(t)$, but in the first case $t$ is running the interval $\io$, 
while in the second case $t$ is running the interval $\oi$ in the \emph{opposite direction}, 
from $\infty$ to $0$.
We write these paths symbolically as $H_0^f(-\infty) \darr H$ and $H_0^f(\infty) \darr H$.

Similarly, there are two natural ways to connect $H = \Ha^f(0)$ with $\Ha^f(\infty)=\Ha^f(-\infty)$: 
both are given by the family $\Ha^f(t)$,
with $t$ running either the interval $\oi$ in the positive direction, 
or the interval $\io$ in the negative direction (from $0$ to $-\infty$).
We write these paths as $H \darr \Ha^f(\infty)$ and $H \darr \Ha^f(-\infty)$.

Concatenating a path going from $H^f_0(\infty)$ to $H$ with a path going from $H$ to $\Ha^f(\infty)$,
we obtain a path going from $H^f_0(\infty)$ to $\Ha^f(\infty)$ via $H$.
Every half of this combined path can be chosen from two alternatives, 
so there are four natural choices for the combined path.
We write them as $H^f_0(\pm\infty) \darr H \darr \Ha^f(\pm\infty)$ (with the corresponding choice of signs),
or shorter as $H^f_0(\pm\infty) \darr \Ha^f(\pm\infty)$, 
and depict them by arrows as shown on pictures below.

\subsection*{Spectral flow.}

Suppose now that $(\lambda,f)$ \emph{satisfies Assumption \ref{assu:generic}} and $\eps>0$ is small enough.
Taking into account the additivity of the spectral flow under concatenation of paths, 
we see that Theorem \ref{thm:sf=sf} can be written as follows.
\begin{align}
& \spf_{\lambda+\eps}\br{H^f_0(\infty) \darr \Ha^f(\infty)} = 0, \label{eq:sf:lambda+eps=0} \\
& \spf_{\lambda-\eps}\br{H^f_0(-\infty) \darr \Ha^f(-\infty)} = 0. \label{eq:sf:lambda-eps=0}
\end{align}
Using these equalities, we can now easily compute the spectral flow 
through $\lambda\pm\eps$ of the other three paths.
Indeed, in the combined path
\begin{equation}\label{eq:0-iHiHi}
	H^f_0(-\infty) \darr H \darr \Ha^f(\infty) \darr H \darr H^f_0(\infty) 
\end{equation}
the middle part $H \darr \Ha^f(\infty) \darr H$ is contractible, 
so \eqref{eq:0-iHiHi} is homotopic to the path $H^f_0(t)$, $t\in\ii$, 
and the first part of Theorem \ref{thm:sf-HBa-path} provides
\[ \spf_{\mu}\br{H^f_0(-\infty) \darr \Ha^f(\infty)} - \spf_{\mu}\br{H^f_0(\infty) \darr \Ha^f(\infty)} 
  =  \spf_{\mu}\br{H_0^f(t)}_{t\in\ii} = \Pof \]
for every $\mu\in\R$.	
Taking this together with \eqref{eq:sf:lambda+eps=0}, we obtain 
\begin{equation}\label{eq:sf:lambda+eps-+}
	\spf_{\lambda+\eps}\br{H^f_0(-\infty) \darr \Ha^f(\infty)} = \Pof. 
\end{equation}
Similarly, 
\begin{align}
& \spf_{\lambda+\eps}\br{H^f_0(\infty) \darr \Ha^f(-\infty)} = -\Paf, \label{eq:sf:lambda+eps+-} \\
& \spf_{\lambda+\eps}\br{H^f_0(-\infty) \darr \Ha^f(-\infty)} = \Pof-\Paf = \nu_0(f)-\nu_{\alpha}(f), 
\label{eq:sf:lambda+eps--}
\end{align}
where the last equality follows from \eqref{eq:beta=P-nu+1}.
In a similar manner, \eqref{eq:sf:lambda-eps=0} 
together with the first part of Theorem \ref{thm:sf-HBa-path} provide
\begin{align}
& \spf_{\lambda-\eps}\br{H^f_0(-\infty) \darr \Ha^f(\infty)} = \Paf, 
\label{eq:sf:lambda-eps-+} \\
& \spf_{\lambda-\eps}\br{H^f_0(\infty) \darr \Ha^f(-\infty)} = -\Pof, 
\label{eq:sf:lambda-eps+-} \\
& \spf_{\lambda-\eps}\br{H^f_0(\infty) \darr \Ha^f(\infty)} = \Paf-\Pof = \nu_{\alpha}(f)-\nu_0(f).
\label{eq:sf:lambda-eps++}
\end{align}
Formulas \eqref{eq:sf:lambda+eps--} and \eqref{eq:sf:lambda-eps++}
express the difference of the Robin and nodal counts of $f$ in terms 
of the spectral flow of a path connecting the Laplacian on $\Gamma_0$ 
with the Laplacian on $\Ga$, both taken with appropriate boundary conditions.

For $\alpha=\nicefrac{\pi}{2}$ we obtain the difference of the Neumann and nodal counts of $f$
expressed in terms of the spectral flow of a path connecting the Hamiltonian on $\Gamma_0$ 
with the Neumann-Kirchhoff Laplacian $H(\Ga)$ on $\Ga$.
The importance of this difference was pointed out in \cite{MR4314130}.

\subsection*{Symmetry.}

The following two pictures show schematically all these eight paths and their spectral flows (in blue color):
\begin{equation*}
\begin{tikzcd}[cramped]
{\boxed{\boldsymbol{\spf_{\lambda+\eps}}}}
& \Ha^f(\infty) \ar[dash]{d} & 
\\   
 H_0^f(-\infty) \ar[dash]{r} 
  \ar[blue, dashed, bend right=30, start anchor={[yshift=1ex]}, end anchor={[xshift=0]}]{ru}{\abs{\P_0}} 
	\ar[blue, dashed, bend left=30, start anchor={[yshift=-1ex]}, end anchor={[xshift=0]}]{rd}[swap]{\abs{\P_0}-\abs{\Pa}}
  & H \ar[dash]{r} 
	& H_0^f(\infty) 
	\ar[blue, dashed, bend left=30, start anchor={[yshift=1ex]}, end anchor={[xshift=-0.2ex]}]{lu}[swap]{0} 
	\ar[blue, dashed, bend right=30, start anchor={[yshift=-1ex]}, end anchor={[xshift=-0.2ex]}]{ld}{-\abs{\Pa}}
\\
& \Ha^f(-\infty) \ar[dash]{u} &
\end{tikzcd}
\quad\quad\quad
\begin{tikzcd}[cramped]
{\boxed{\boldsymbol{\spf_{\lambda-\eps}}}}
& \Ha^f(\infty) \ar[dash]{d} & 
\\  
 H_0^f(-\infty) \ar[dash]{r} 
  \ar[blue, dashed, bend right=30, start anchor={[yshift=1ex]}, end anchor={[xshift=0]}]{ru}{\abs{\Pa}} 
	\ar[blue, dashed, bend left=30, start anchor={[yshift=-1ex]}, end anchor={[xshift=0]}]{rd}[swap]{0}
  & H \ar[dash]{r} 
	& H_0^f(\infty) 
	\ar[blue, dashed, bend left=30, start anchor={[yshift=1ex]}, end anchor={[xshift=-0.2ex]}]{lu}[swap]{\abs{\Pa}-\abs{\P_0}} 
	\ar[blue, dashed, bend right=30, start anchor={[yshift=-1ex]}, end anchor={[xshift=-0.2ex]}]{ld}{-\abs{\P_0}}
\\
& \Ha^f(-\infty) \ar[dash]{u} &
\end{tikzcd}
\end{equation*}
One can see a surprising antisymmetry here:
the blue parts of two pictures go one into another after the central reflection and change of the signs, that is,
\begin{equation}\label{eq:antisymmetry}
	\spf_{\lambda+\eps}\br{H^f_0(\I) \darr \Ha^f(\I')} = -\spf_{\lambda-\eps}\br{H^f_0(-\I) \darr \Ha^f(-\I')}, 
\end{equation}
where $\I$ and $\I'$ can take the values $\infty$ or $-\infty$ independently one from another.
The authors do not know if there is a deeper meaning behind this observation.

\section{Spectral flow and Betti number}\label{sec:sf-beta}

In this section we present two results, which relate the Betti number of the graph 
with the spectral flow of appropriate operator families.

\subsection*{Deformation to Neumann conditions.}

Let $\alpha=\nicefrac{\pi}{2}$ and $\B\subset\V$ be a set of degree two vertices.
The boundary condition $\delta_{\pi/2}^v(\infty)$ means the vanishing of $f'$ at each side of $v$.
Therefore, $H_{\pi/2}^{\B}(\infty) = H(\Gamma_\B)$ is the Neumann-Kirchhoff Laplacian  
on the graph $\Gamma_\B=\overline{\Gamma\backslash\B}$ obtained by cutting $\Gamma$ at points of $\B$.

The one-parameter family $H_{\pi/2}^{\B}(t)$, $t\in\oi$
connects the Neumann-Kirchhoff Lap\-lacian $H=H(\Gamma)$ with the Neumann-Kirchhoff Laplacian $H(\Gamma_\B)$. 
The following result relates the spectral flow of this family 
(or equivalently the Morse index of the two-sided Neumann-to-Dirichlet operator)
with the number of cycles removed from $\Gamma$ in the process of cutting.
See also Fig.~\ref{fig:Demo-betti}.

\begin{thm}\label{thm:beta-beta} 
Let $H$ be the Neumann-Kirchhoff Laplacian on $\Gamma$. Then 
\begin{equation*}
 \Mor\br{\Lambda_{\pi/2}^{\B}(\eps)} =  \spf_{\eps}\br{H_{\pi/2}^{\B}(t)}_{t\in\oi}
  = \beta(\Gamma)-\beta(\Gamma_\B) \;\text{ for }\eps>0\text{ small enough}.
\end{equation*}
\end{thm}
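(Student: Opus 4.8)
The plan is to deduce Theorem~\ref{thm:beta-beta} from the machinery already built, specifically from Theorem~\ref{thm:sf-HBa-path} applied with $\alpha=\pi/2$, together with the Betti-number bookkeeping identity \eqref{eq:beta-beta}. The first equality in the statement, namely $\Mor\br{\Lambda_{\pi/2}^{\B}(\eps)} = \spf_{\eps}\br{H_{\pi/2}^{\B}(t)}_{t\in\oi}$, is exactly \eqref{eq:sf=Mor} of Theorem~\ref{thm:sf-HBa-path} with $\mu=\eps$ and $\alpha=\pi/2$: one only has to check that for $\eps>0$ small enough, $\eps$ lies outside the spectrum of both $H$ and $H_{\pi/2}^{\B}(\infty)=H(\Gamma_\B)$, so that $\Lambda_{\pi/2}^{\B}(\eps)$ is well defined and invertible by Proposition~\ref{prop:f-Lambda}. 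Since $H$ and $H(\Gamma_\B)$ are each bounded below with discrete spectrum, the interval $(0,\eps]$ avoids both spectra once $\eps$ is small; the only point to be careful about is whether $0$ itself is an eigenvalue, but the constant function on each component of $\Gamma_\B$ shows $0\in\Spec H(\Gamma_\B)$, so one must take $\eps$ strictly below the next eigenvalue and rely on $\eps>0$ (not $\eps=0$).

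The substance of the theorem is therefore the second equality, $\spf_{\eps}\br{H_{\pi/2}^{\B}(t)}_{t\in\oi} = \beta(\Gamma)-\beta(\Gamma_\B)$. I would obtain this by computing $\Mor\br{\Lambda_{\pi/2}^{\B}(\eps)}$ directly for small $\eps>0$. The strategy is the same as in the $\alpha\neq 0$ branch of the proof of Theorems~\ref{thm:Robin-def}, \ref{thm:Mor-Robin}, \ref{thm:sf=sf}: use Proposition~\ref{prop:sf=Mor-1}, which for $\alpha=\pi/2\neq 0\mod\pi$ gives
\[
  \spf_{\eps}\br{H_{\pi/2}^{\B}(t)}_{t\in\oi}
  = \Mor\br{H-\eps} - \Mor\br{H_{\pi/2}^{\B}(\infty)-\eps} + \bigl(\text{correction along }\io\bigr),
\]
but more cleanly: since the family is uniformly bounded below on $\oi$ by Proposition~\ref{prop:bdd-below}, Proposition~\ref{prop:sf=Mor} yields $\spf_{\eps}\br{H_{\pi/2}^{\B}(t)}_{t\in\oi} = \Mor\br{H-\eps} - \Mor\br{H(\Gamma_\B)-\eps}$. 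For $\eps>0$ small, $\Mor\br{H-\eps}$ counts the eigenvalues of $H$ below $\eps$, which is the multiplicity of $0$ as an eigenvalue of $H$, namely $1$ (the graph $\Gamma$ is connected, so the Neumann-Kirchhoff Laplacian has simple ground state $0$). Likewise $\Mor\br{H(\Gamma_\B)-\eps}$ equals the multiplicity of $0$ for $H(\Gamma_\B)$, which is the number $\abs{\pi_0(\Gamma_\B)}$ of connected components of $\Gamma_\B$. Hence
\[
  \spf_{\eps}\br{H_{\pi/2}^{\B}(t)}_{t\in\oi} = 1 - \abs{\pi_0(\Gamma_\B)},
\]
and this is negative of what we want unless I have a sign issue — so instead one should run the spectral flow together with \eqref{eq:sf=Pos}, or note that $\Mor\br{\Lambda_{\pi/2}^{\B}(\eps)}$ must be compared using the \emph{other} interval. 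Concretely, by Theorem~\ref{thm:sf-HBa-path} applied along $\io$, $\spf_{\eps}\br{H_{\pi/2}^{\B}(t)}_{t\in\io} = \Pos\br{\Lambda_{\pi/2}^{\B}(\eps)}$, and since $\Lambda_{\pi/2}^{\B}(\eps)$ is a self-adjoint operator on $\C^{\B}$ and invertible, $\Mor + \Pos = \abs{\B}$; combining with $\spf$ over the whole loop being $\abs{\B}$ reshuffles the counts. The correct extraction is: $\Mor\br{\Lambda_{\pi/2}^{\B}(\eps)} = \abs{\B} - \Pos\br{\Lambda_{\pi/2}^{\B}(\eps)} = \abs{\B} - \spf_{\eps}\br{H_{\pi/2}^{\B}(t)}_{t\in\io}$, and $\spf_{\eps}$ along $\io$ equals $\Mor\br{H(\Gamma_\B)-\eps} - \Mor\br{H-\eps} = \abs{\pi_0(\Gamma_\B)} - 1$ by Proposition~\ref{prop:sf=Mor} (the family is bounded below on $\io$ as well for $\alpha\neq0$, after excluding $t=0$ as in Proposition~\ref{prop:bdd-below}). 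Therefore
\[
  \Mor\br{\Lambda_{\pi/2}^{\B}(\eps)} = \abs{\B} - \abs{\pi_0(\Gamma_\B)} + 1 = \beta(\Gamma) - \beta(\Gamma_\B),
\]
the last step being precisely identity \eqref{eq:beta-beta}.

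The main obstacle I anticipate is purely bookkeeping: getting the orientation of the interval ($\oi$ versus $\io$) and the associated sign of the spectral flow right, since for $\alpha=\pi/2$ the special point of $\RR$ where the decoupled operator sits is $t=\infty$ (as for $\alpha=0$) yet the uniform-lower-bound behavior is the one typical of $\alpha\neq0$ — the family is \emph{not} bounded below near $t=+0$, so one must route the argument through Proposition~\ref{prop:bdd-below} with its $\alpha\neq0$ clause and be vigilant that the two sub-intervals $\oi$ and $\io$ that we glue at $t=0$ and at $t=\pm\infty$ are handled consistently. A secondary, minor point is justifying that $\Mor\br{H-\eps}=1$ and $\Mor\br{H(\Gamma_\B)-\eps}=\abs{\pi_0(\Gamma_\B)}$ for $\eps>0$ small, which is just the statement that the Neumann-Kirchhoff Laplacian on a connected graph has $0$ as a simple eigenvalue and no negative spectrum — standard, but worth a sentence. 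Once these are pinned down, the computation collapses to \eqref{eq:beta-beta} and the proof is complete.
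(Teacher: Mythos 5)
Your final argument is correct and is essentially the paper's proof: compute $\spf_{\eps}$ along $\io$ via Proposition~\ref{prop:sf=Mor} as $\Mor\br{H(\Gamma_\B)-\eps}-\Mor\br{H-\eps}=\abs{\pi_0(\Gamma_\B)}-1$, convert to the $\oi$ count using the loop value $\abs{\B}$ from Theorem~\ref{thm:sf-HBa-path} (your $\Mor+\Pos=\abs{\B}$ is the same fact), and finish with \eqref{eq:beta-beta}. The one blemish is the intermediate claim that the family is uniformly bounded below on $\oi$ — for $\alpha=\pi/2$ it is not, because of the blow-up as $t\to+0$, which is exactly why that first computation came out wrong — but you abandon that route and your closing paragraph correctly diagnoses the issue, so the proof as finally assembled stands.
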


\begin{proof}
The first equality is given by the second part of Theorem \ref{thm:sf-HBa-path},
so we only need to prove the second equality.
By Proposition \ref{prop:sf=Mor-1},  
\[ \spf_{\eps}\br{H_{\pi/2}^{\B}(t)}_{t\in\io}=\Mor\br{H_{\pi/2}^{\B}(\infty)-\eps}-\Mor\br{H-\eps}. \]
The Neumann-Kirchhoff Laplacian $H$ on the connected graph $\Gamma$ is non-negative,
with one-dimensional kernel consisting of constant functions, so 
\[ \Mor\br{H-\eps} = \dim\ker(H) = 1. \]
The Neumann-Kirchhoff Laplacian $H_{\pi/2}^{\B}(\infty) = H(\Gamma_\B)$ 
on the graph $\Gamma_{\B}$ is also non-negative; 
its kernel consists of functions which are constant on connected components of $\Gamma_{\B}$.
Hence 
\[ \Mor\br{H_{\pi/2}^{\B}(\infty)-\eps} = \dim\ker\br{H_{\pi/2}^{\B}(\infty)}
  = \abs{\pi_{0}\br{\Gamma_{\B}}}, \]
the number of connected components of $\Gamma_{\B}$.
Taking all this together, we obtain
\[ \spf_{\eps}\br{H_{\pi/2}^{\B}(t)}_{t\in\io} = \abs{\pi_{0}\br{\Gamma_{\B}}}-1. \]
Combining this formula with the first part of Theorem~\ref{thm:sf-HBa-path}
and using the additivity of the spectral flow under concatenation, we obtain
\[
\spf_{\eps}\br{H_{\pi/2}(t)}_{t\in\oi} = \spf_{\eps}(\cdot)_{t\in\ii}-\spf_{\eps}(\cdot)_{t\in\io}
 = \abs{\B}-\abs{\pi_{0}(\Gamma_{\B})}+1.
\]
It remains to apply equality \eqref{eq:beta-beta}. 
\end{proof}

\begin{figure}
\includegraphics[scale=0.55]{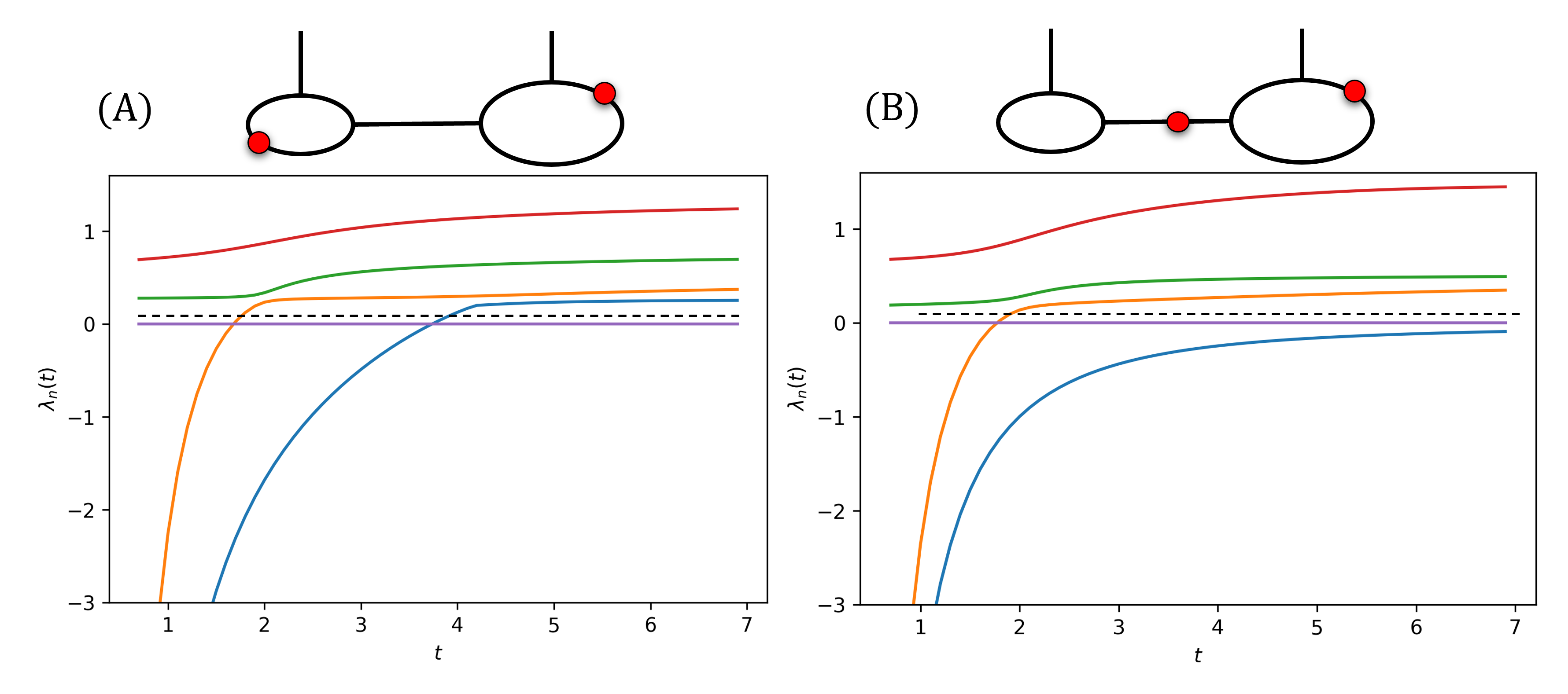}
\caption[Theorem \ref{thm:beta-beta}]
{\small{Demonstration of Theorem \ref{thm:beta-beta}
for a two-cycle graph and two different choices of $\B$. 
In (A), the set $\B$ is chosen so that $\beta(\Gamma_{\B})=0$,
and thus the spectral flow through the dashed line $\lambda=\eps$ is $\beta(\Gamma)-\beta(\Gamma_{\B})=2$.
In (B), the set $\B$ is chosen so that $\beta(\Gamma_{\B})=1$,
and thus the spectral flow through the dashed line $\lambda=\eps$ is $\beta(\Gamma)-\beta(\Gamma_{\B})=1$.}
\label{fig:Demo-betti}}
\end{figure}

\subsection*{Counting intersections along $\R$.}

Our next result expresses the Betti number of $\Gamma$ in terms of the spectral flow of a specific operator family.

\begin{thm}\label{thm:SF-R-beta} 
Let $(\lambda,f)$ be an eigenpair of the Neumann-Kirchhoff Laplacian $H$.  
Suppose that $(\lambda,f)$ satisfies Assumption \ref{assu:generic} and the eigenvalue $\lambda$ is simple. 
Then, for a fixed $\alpha$ and $T>0$ large enough, 
\begin{equation*}
\spf_{\lambda}\br{\Ha^{f}(t)}_{t\in\brr{-T,T}}=\beta(\Gamma). 
\end{equation*}
\end{thm}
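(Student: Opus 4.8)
The plan is to compute the spectral flow of $\Ha^f(t)$, $t\in[-T,T]$, through the level $\lambda$ by exploiting the fact that every spectral curve of this family is either strictly increasing or constant (Proposition~\ref{prop:monotonicity}), together with the known behaviour at the two endpoints as $T$ becomes large. Since $\lambda$ is a simple eigenvalue of $H=\Ha^f(0)$, Proposition~\ref{lem:f-t} tells us that the curve carrying $f$ is the unique horizontal curve sitting at level $\lambda$; all other spectral curves crossing $\lambda$ do so transversally (with positive slope). Hence $\spf_{\lambda}(\Ha^f(t))_{t\in[-T,T]}$ counts exactly the number of strictly increasing spectral curves that pass through $\lambda$ while $t$ runs over $(-T,T]$, provided we are careful about the endpoints — and for this we first want to choose $T$ large enough that no crossing happens at $t=\pm T$ and that the spectral configuration near $t=\pm T$ has stabilized.

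The key step is to relate the two halves $t\in[-T,0]$ and $t\in[0,T]$ to the spectral flows along $\io$ and $\oi$ that are already under control. First I would choose $\eps>0$ small enough that Theorem~\ref{thm:sf=sf} and the formulas \eqref{eq:Mor0lambda+-eps}, \eqref{eq:Pos-lambda-eps}, \eqref{eq:sf=Mor:lambda+eps} hold, and also small enough that no non-constant spectral curve of $\Ha^f(t)$ (for $t$ anywhere in $\RR$) takes a value in $[\lambda-\eps,\lambda)\cup(\lambda,\lambda+\eps]$ at $t=0$; this is possible because the spectrum of $H$ near $\lambda$ consists only of $\lambda$ itself (with multiplicity one, the horizontal curve). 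Then, because spectral curves are monotone, a non-constant curve crosses the level $\lambda$ at some $t_0\in(0,\infty)$ if and only if it crosses $\lambda-\eps$ at some slightly smaller parameter and $\lambda+\eps$ at some slightly larger one; counting crossings of $\lambda$ for $t\in(0,T]$ therefore agrees, for $T$ large, with the number of non-constant curves that ever cross $\lambda$ for positive $t$, which by the argument in the proof of Theorem~\ref{thm:sf-HBa-path} equals $\Mor(\La^f(\lambda+\eps)) = \spf_{\lambda+\eps}(\Ha^f(t))_{t\in\oi}$. (One must check the curves crossing $\lambda$ for $t>0$ are in bijection with eigenvalues of $\La^f(\mu)$ for an appropriate $\mu\in(\lambda,\lambda+\eps)$ using Proposition~\ref{prop:DTN-correspondence} and monotonicity — the same bookkeeping as in Theorem~\ref{thm:sf-HBa-path}.) Symmetrically, the number of crossings of $\lambda$ for $t\in(-T,0)$ equals the number of non-constant curves crossing $\lambda$ for negative $t$, which equals $\Pos(\La^f(\lambda-\eps)) = \spf_{\lambda-\eps}(\Ha^f(t))_{t\in\io}$.

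Adding the two halves and invoking Theorem~\ref{thm:sf=sf} together with \eqref{eq:beta=P-nu+1} gives
\begin{equation*}
\spf_{\lambda}\br{\Ha^f(t)}_{t\in[-T,T]} = \br{N(\lambda)-\nu_0(f)} + \br{\Pof - n(\lambda)+1} = \Pof - \nu_0(f) + 1 = \beta(\Gamma),
\end{equation*}
using that $\lambda$ is simple so $N(\lambda)=n(\lambda)$. I expect the main obstacle to be the careful endpoint analysis: making precise the claim that for $T$ large the count of crossings of $\lambda$ over the finite interval $(-T,T]$ coincides with the total count of crossings by non-constant curves over all of $(-\infty,0)\cup(0,\infty)$ — i.e. that no non-constant spectral curve crosses $\lambda$ outside a bounded $t$-window. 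This follows because each such crossing corresponds to a real eigenvalue $-t$ of the (fixed, finite-dimensional, self-adjoint) operator $\La^f(\mu)$ as $\mu$ ranges near $\lambda$; since $\La^f$ has only finitely many eigenvalues and they depend continuously on $\mu$, all crossing parameters lie in a bounded set, and choosing $T$ to exceed its supremum (and to avoid the finitely many crossing values of $t$ at level exactly $\lambda$) completes the argument.
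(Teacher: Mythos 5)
Your proof is correct, but it takes a genuinely different route from the paper's. The paper counts the finite-$t$ crossings of the level $\lambda$ by a subtraction on the circle $\RR$: by the first part of Theorem~\ref{thm:sf-HBa-path} the increasing spectral curves cross the level $\lambda$ exactly $\Paf$ times in total as $t$ runs over the whole loop, and by \eqref{eq:ker-Mor-0}/\eqref{eq:dimKer-alpha} exactly $\nua(f)-1$ of these crossings occur at $t=\infty$ (the multiplicity $\nua(f)$ of $\lambda$ in $\Ha^f(\infty)$, minus one for the horizontal curve); the remaining $\Paf-\nua(f)+1=\beta(\Gamma)$ crossings therefore occur at finite $t$. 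You instead split $\R$ at $t=0$ and identify the crossings of $\lambda$ on each half-line with the crossings of $\lambda+\eps$ on $(0,\infty)$ and of $\lambda-\eps$ on $(-\infty,0)$, which lets you quote Theorem~\ref{thm:sf=sf} and sum $\br{N(\lambda)-\nu_0(f)}+\br{\Pof-n(\lambda)+1}$. Both arguments rest on the same ingredients (monotonicity, the constant curve at level $\lambda$, simplicity, and \eqref{eq:beta=P-nu+1}), but the paper's is more economical: it needs only the loop count $\abs{\B}$ and the kernel dimension of $\Ha^f(\infty)-\lambda$, whereas yours routes through the full index theorem and requires the additional (correct, but not free) bookkeeping that a strictly increasing curve crosses $\lambda$ in $(0,\infty)$ if and only if it crosses $\lambda+\eps$ there, and symmetrically on the negative half-line. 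That step does require exactly the precautions you list --- $(\lambda,\lambda+\eps]$ and $[\lambda-\eps,\lambda)$ disjoint from the spectra of both $H$ and $\Ha^f(\infty)$, and simplicity of $\lambda$ to rule out a second curve through the point $(0,\lambda)$ --- and your closing argument that all crossing parameters lie in a bounded window matches the paper's concluding remark. What your version buys is an explicit link between the finite-$t$ crossing count and the indices $\Mor\br{\La^f(\lambda+\eps)}$ and $\Pos\br{\La^f(\lambda-\eps)}$; what it costs is dependence on the heavier Theorem~\ref{thm:sf=sf}.
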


\begin{proof}
By Proposition \ref{lem:f-t}, the family $\Ha^f(t)$ has a constant spectral curve on the level $\lambda$. 
Since $\lambda$ is a simple eigenvalue of $H=\Ha^{f}\br{0}$, it follows
from Proposition~\ref{prop:monotonicity} that all other spectral
curves intersecting the spectral level $\lambda$ are strictly increasing.

By the first part of Theorem~\ref{thm:sf-HBa-path}, the number of
these strictly increasing spectral curves is $\Paf$.
Formulas \eqref{eq:ker-Mor-0} for $\alpha=0$ and \eqref{eq:dimKer-alpha} for $\alpha\neq0$ provide 
\[ \dim\ker\br{\Ha^{f}(\infty)-\lambda}=\nua(f). \]
Hence, in addition to the constant spectral curve at the level $\lambda$, 
there are exactly $\nua(f)-1$ strictly increasing spectral curves passing through $\lambda$ at $t=\infty$. 
The rest 
\begin{equation*}
	\Paf-\br{\nua(f)-1}=\beta(\Gamma) 
\end{equation*}
of strictly increasing spectral curves intersect the level $\lambda$ at values of parameter $t\neq\infty$, 
that is, $t\in\R$.  
Since the number of intersections is finite, 
all of them happen in the interval $t\in\brr{-T,T}$ for $T$ large enough.
\end{proof}

\begin{figure}
\includegraphics[scale=0.5]{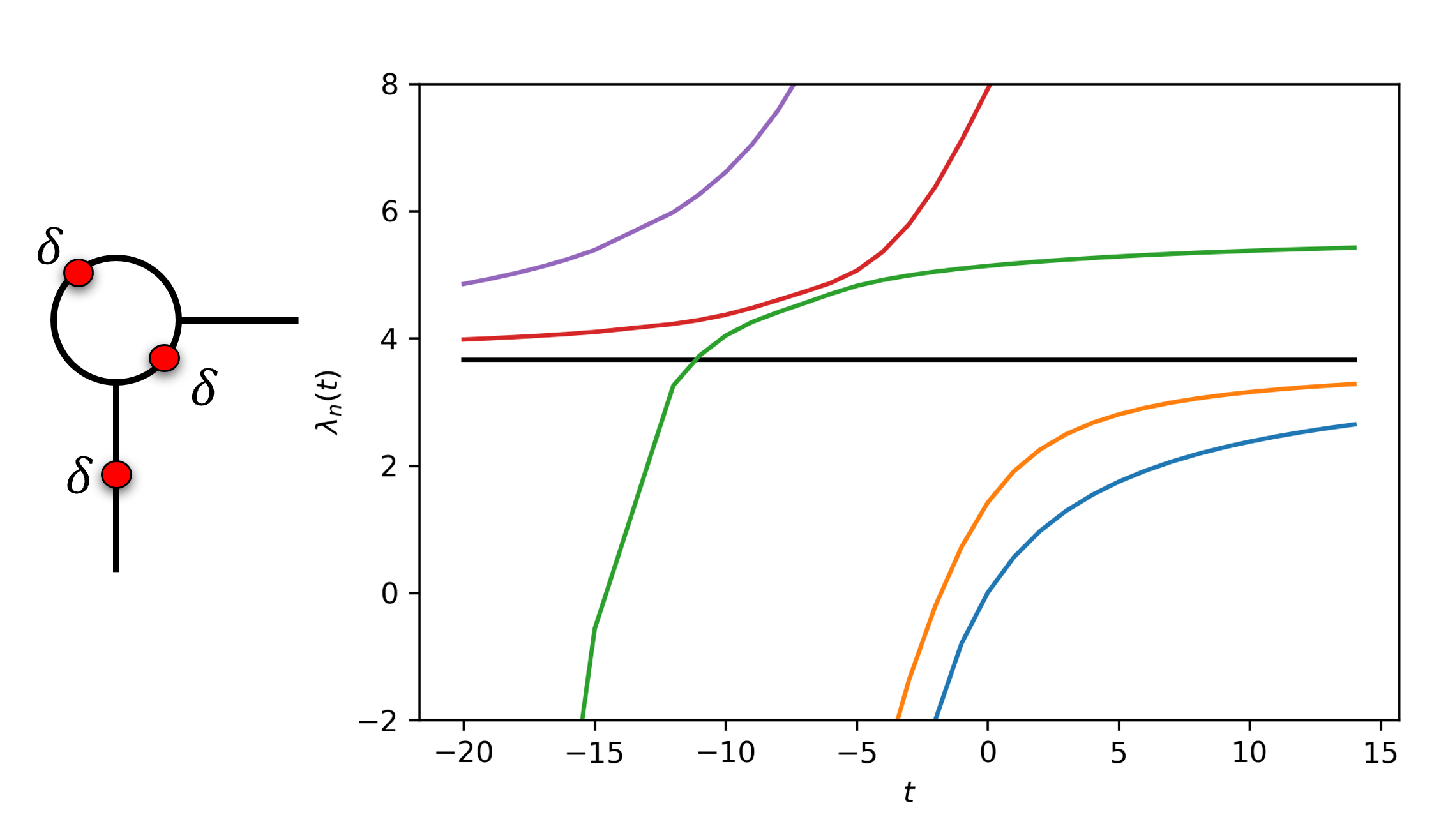}
\caption[Theorem \ref{thm:SF-R-beta}]
{\small{Demonstration of Theorem \ref{thm:SF-R-beta} for a graph with one cycle and $\alpha=0$.
An eigenfunction $f$ has three $\alpha$-Robin points (colored red), that is, $\Paf=3$.
There is one horizontal spectral curve of $\Ha^f(t)$ at the level $\lambda$ (marked in black),
one increasing spectral curve crossing this level at a finite value of $t$,
and two increasing spectral curves crossing this level at infinity.
The total spectral flow along $\ii$ is equal to $3$, as stated in Theorem \ref{thm:sf-HBa-path}.}
\label{fig:Demo-betti-2-1}} 
\end{figure}

\appendix

\section*{Appendix. Spectral flow and winding number}

\renewcommand{\thesection}{A} 

This appendix is devoted to the proof of the \q{spectral flow = winding number} formula 
for loops of boundary conditions on a quantum graph. 
See Theorem \ref{thm:sf-wind-loop}.
We then apply this result to provide an alternative proof of the first part of Theorem \ref{thm:sf-HBa-path}.
Generalization of these results, as well as a more detailed exposition, 
can be found in the paper of the second author \cite{prokhorova_quantum}.

\subsection*{Unitary form of boundary conditions.}

For a Hilbert space $V$, we will denote by $\U(V)$ the set of unitary operators $V\to V$
and equip it with the norm topology.

A self-adjoint boundary condition for $H$ at a vertex $v$ can be written as follows:
\begin{equation}\label{eq:Uv}
	i(U^v-1)\fvec(v) + (U^v+1)\nabla\fvec(v) = 0, \txt{where} U^v\in\U(\C^{\E_v}),
\end{equation}
$\fvec(v)\in\C^{\E_v}$ is the vector of boundary values $f_e(v)$, 
and $\nabla\fvec(v)\in\C^{\E_v}$ is the vector of outgoing derivatives $\nabla f_e(v)$, $e\in\E_v$.
See \cite[thm.~1.4.4]{BerKuc_graphs}.
A (local) boundary condition for $H$ is given by a tuple $(U^v)_{v\in\V}$,
or equivalently by the unitary operator $U=\oplus_v U^v\in\U(\C^{d})$, where $d=\sum_v\deg(v) = 2|\E|$.

More generally, a \emph{global} self-adjoint boundary condition for $H$ is given by 
an arbitrary unitary operator $U$ on $\C^{d}$, as follows: 
\[ i(U-1)\fvec + (U+1)\nabla\fvec = 0, \] 
where $\fvec=\oplus_v\fvec(v)\in\C^{d}$, $\nabla\fvec=\oplus_v\nabla\fvec(v)\in\C^{d}$ 
are vectors of boundary values of $f_e$ and outgoing derivatives of $f_e$ at the ends of all edges $e\in\E$.
The matrix $U$ is also known as the bond scattering matrix $\sigma(-1)$, see \cite[sec.~2.1.2]{BerKuc_graphs}.
We use global boundary conditions in our proof of Theorem \ref{thm:sf-wind-loop}

\subsection*{Winding number.} 

Let $U_t\in\U(\C^d)$, $t\in S^1$ be a loop of unitary operators.
Its \emph{winding number} is defined as the degree of the composition
\[ S^1 \tor{U} \U(\C^d) \tor{\det} \U(\C)\cong S^1 \]
and is denoted $\wind(U_t)$.
Here $\det\colon\U(\C^d)\to\U(\C)$ is the determinant map
and $\U(\C) = \sett{e^{i\alpha}}{\alpha\in\R}$ is oriented in the direction of increasing of $\alpha$.
The winding number is a homotopy invariant of a loop and provides an isomorphism
\begin{equation}\label{eq:wind-pi1}
	\wind\colon\pi_1(\U(\C^d))\to\Z,  
\end{equation}
so two loops $S^1\to\U(\C^d)$ with the same winding number are homotopic.

\subsection*{Spectral flow formula.}

A continuous family $U_t$ of unitary operators on $\C^{d}$ 
leads to a continuous family of (Lagrangian) subspaces\footnote{The winding number of $U_t$ 
coincides with the Maslov index of the corresponding loop $L_t$.} 
in $\C^{d}\oplus\C^{d}$, so the corresponding family $H_t$ of realizations of $H$ 
is graph continuous by Proposition \ref{prop:HL-cont}.

The following theorem is the main result of the appendix.
It could be deduced from \cite[Theorem 4.1]{ivanov2023boundary} 
or \cite[Theorem 3]{prokhorova2024family}. 
However, it is instructive to give a proof for quantum graphs in their own language. 


\renewcommand\thethm{A}

\begin{thm}\label{thm:sf-wind-loop}
  Let $H$ be the Laplacian on a finite metric graph $\Gamma$, not necessarily connected.
	Let $U_t$, $t\in S^1$ be a loop of unitary operators in $\C^d$
	and $H_t$ be the corresponding loop of self-adjoint realizations of $H$.
	Then the spectral flow of $H_t$ is equal to the winding number of $U_t$:	
	\begin{equation}\label{eq:sf-wind}
		\spf(H_t) = \wind(U_t). 
	\end{equation}
	For local boundary conditions $U_t=\oplus_v U^v_t$, this can be written as
	\begin{equation}\label{eq:sf-wind-v}
		\spf(H_t) = \sum_{v\in\V}\wind(U^v_t). 
	\end{equation}
\end{thm}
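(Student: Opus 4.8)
The plan is to reduce the general (global) case to a single normalization computation and then invoke the two structural tools already at hand: homotopy invariance of both sides of \eqref{eq:sf-wind}, and the additivity of the spectral flow under concatenation. First I would record that, by the discussion preceding the theorem, the family $H_t$ is graph continuous, so $\spf(H_t)$ is well defined; and that $\wind$ descends to the isomorphism \eqref{eq:wind-pi1}. Hence both $\spf(H_t)$ and $\wind(U_t)$ depend only on the class $[U_t]\in\pi_1(\U(\C^d))\cong\Z$. It therefore suffices to prove \eqref{eq:sf-wind} for one loop in each generator class, i.e. to verify it for a single loop $U_t$ with $\wind(U_t)=1$ (and for the constant loop, where both sides are manifestly $0$), and then use that spectral flow is additive under concatenation while $\wind$ is additive under pointwise product of loops to propagate to all of $\pi_1$.

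Next I would choose a convenient generator. Take $U_t = \operatorname{diag}(e^{2\pi i t},1,\dots,1)$, $t\in[0,1]$, acting on $\C^d$ where $d=2|\E|$: this rotates the phase in exactly one of the ``boundary slots'' (say the end $x=0$ of some chosen edge $e_0$) and keeps all other ends fixed with $U^v=1$, i.e. Neumann-Kirchhoff/Neumann data there. Its winding number is $1$ by inspection of $\det$. The boundary condition at the distinguished end is $i(e^{2\pi i t}-1)f_{e_0}(0) + (e^{2\pi i t}+1)\nabla f_{e_0}(0)=0$, which up to a scalar is $f_{e_0}(0) = \cot(\pi t)\,\nabla f_{e_0}(0)$ (with the endpoints $t=0$ and $t=\tfrac12$ giving Neumann and Dirichlet respectively). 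This is precisely a one-parameter $\delta$-type family at a single boundary point, of the kind analyzed in Section~\ref{sec:delta-s-properties}: every spectral curve of $H_t$ is constant or strictly monotone, and one can compute directly — or cite the relevant part of Theorem~\ref{thm:sf-HBa-path} / Proposition~\ref{prop:DTN-correspondence} applied to the single-point Robin map on the relevant subgraph — that the spectral flow of this loop through any $\mu$ in the resolvent set equals $1$ (the spectrum is shifted up by exactly one position as $t$ runs the circle, because the single boundary parameter sweeps through $\RR\cong S^1$ once). This matches $\wind(U_t)=1$, establishing the base case.

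The passage from the base case to the general statement is then purely formal: given an arbitrary loop $U_t$ with $\wind(U_t)=k$, homotope it (rel basepoint, within $\U(\C^d)$) to the $k$-fold concatenation of the generator loop above — possible by \eqref{eq:wind-pi1}; apply homotopy invariance of spectral flow in the class of loops (property (3) of $\spf$) to see $\spf(H_t)$ is unchanged under this homotopy; and apply additivity under concatenation to get $\spf(H_t)=k\cdot 1 = k = \wind(U_t)$. (If $k<0$ run the generator backwards, which negates both sides.) Finally, the local-boundary-condition formula \eqref{eq:sf-wind-v} follows because for $U_t=\oplus_v U^v_t$ one has $\det U_t = \prod_v \det U^v_t$, hence $\wind(U_t)=\sum_v\wind(U^v_t)$; combined with \eqref{eq:sf-wind} this gives \eqref{eq:sf-wind-v}.

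The main obstacle I expect is the base-case computation: one must actually pin down that the single-phase-rotation loop has spectral flow exactly $1$, not merely that it is some nonzero integer. I would handle this by identifying the loop with an $H^{\B}_{\alpha}$-type loop at a single artificial degree-two vertex (splitting the edge $e_0$ at an interior point and imposing the rotated $\delta(t)$ condition there, which is spectrally equivalent), so that Theorem~\ref{thm:sf-HBa-path}'s conclusion $\spf = |\B| = 1$ applies verbatim; alternatively, one invokes monotonicity of spectral curves (Proposition~\ref{prop:monotonicity}) plus the fact that exactly one curve is non-constant to count the single crossing directly. A secondary subtlety is that $U_t$ need not factor through local conditions, so one genuinely needs the global version of Proposition~\ref{prop:HL-cont} for graph continuity and the global framework of the appendix; but that is already set up in the excerpt.
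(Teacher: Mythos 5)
Your proposal follows essentially the same route as the paper: both sides are homotopy invariants classified by $\pi_1(\U(\C^d))\cong\Z$, so one normalizes on a single generator --- a $\delta$-type rotating condition at one edge end with the remaining boundary data decoupled and constant --- computes its spectral flow to be $1$ by an explicit/monotonicity argument for the single non-constant spectral curve, and propagates to arbitrary winding number by homotopy invariance and additivity under concatenation, with \eqref{eq:sf-wind-v} following from multiplicativity of the determinant. The one caution concerns your fallback of citing Theorem~\ref{thm:sf-HBa-path} for the base case: besides the mismatch between a degree-one edge endpoint and the degree-two vertices that theorem requires, this would undercut the appendix's stated purpose of providing an independent proof of that theorem, so the direct computation (which is what the paper does) is the right choice.
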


\proof
Since the determinant is multiplicative with respect to the direct sum of operators,
$\wind(U_t) = \sum_{v\in\V}\wind(U^v_t)$, so \eqref{eq:sf-wind-v} follows from \eqref{eq:sf-wind}.

1. Let us first check the equality \eqref{eq:sf-wind} for the graph consisting of one edge $e=[0,\ell]$ 
and taken with the following loop of boundary conditions:
\begin{equation}\label{eq:delta-e}
	f'(0)=t\cdot f(0), \quad f'(\ell)=0, 
\end{equation}
where $t$ runs the extended real line $\RR = \R\cup\set{\infty}\cong S^1$
and $t=\infty$ corresponds to the condition $f(0)=0$.
In other words, we take the $\delta(t)$ condition $\nabla f = tf$ at one vertex 
and the fixed condition $\nabla f = 0$ at the other vertex.
In the unitary picture, the boundary condition at the first vertex is given by the unitary 
$U_t=(i-t)/(i+t)$.
The corresponding loop $\RR\to\U(\C)$ makes one rotation in the positive direction, 
so the right hand side of \eqref{eq:sf-wind} is equal to $1$.
The spectrum of each unbounded operator in this family can be explicitly computed 
and it can be easily seen that 
zero eigenvalue exists only for the value $t=0$ and has multiplicity $1$.
The corresponding spectral curve is a monotonously increasing function of $t$, 
so it intersects zero spectral level at $t=0$ in the positive direction.
Therefore, the spectral flow of this loop is equal to $1 = \wind(U_t)$ 
and hence \eqref{eq:sf-wind} is satisfied.

2. Let now $\Gamma$ be an arbitrary graph 
with an arbitrary loop of boundary conditions.
If two loops in $\U(\C^{d})$ have the same winding number $m$, then they are homotopic, 
so by Proposition \ref{prop:HL-cont} the corresponding loops of Hamiltonians are also homotopic 
and their spectral flows coincide.
Hence we only need to check \eqref{eq:sf-wind} for \textit{one} loop in $\U(\C^{d})$ 
with the winding number $m$, for every $m\in\Z$.
For $m=0$ take a constant loop, then both the spectral flow and winding number vanish.
For $m<0$, reverse the direction of the loop; 
then the signs of both the spectral flow and the winding number are reversed.
Therefore, we can suppose that $m>0$.

Choose an edge $e\in\Gamma$, and let $\Gamma_e$ be the subgraph of $\Gamma$ 
obtained by removing the edge $e$.
Fix an arbitrary boundary condition at $\Gamma_e$ and 
the concatenation of $m$ loops of boundary conditions \eqref{eq:delta-e} at $e$.
Let $U_t$ and $H_t$ be the corresponding loops of unitary operators in $\U(\C^{d})$ 
and unbounded operators on $L^2(\Gamma)$. 

Since the boundary conditions on $e$ and $\Gamma_e$ are independent one from another, 
the loop $H_t$ is in fact a loop of operators on the disjoint union $e\sqcup\Gamma_e$, 
or equivalently the orthogonal sum of two loops of operators on $L^2(e)$ and $L^2(\Gamma_e)$.
The second loop is constant, so its spectral flow vanishes.
The first loop, on $L^2(e)$, 
is the concatenation of $m$ copies of the loop described in (1).
Both the spectral flow and winding number are additive under direct sums and under concatenation of loops,
so \[ \spf(H_t) = 0+m\cdot 1 = m = \wind(U_t), \]
and thus \eqref{eq:sf-wind} holds for this loop of boundary conditions.
This completes the proof of the theorem.
\endproof

\subsection*{Another proof of the first part of Theorem \ref{thm:sf-HBa-path}}

Let $U^v_t$ be the unitary operator which determines the vertex boundary condition for $\HBa(t)$ at $v$.
For $v\notin\B$ the loop $U^v_t$ is constant, so its winding number vanishes.
For $v\in\B$ the loop $U^v_t$ corresponds to the $\del(t)$ loop of boundary conditions at $v$.
We claim that its winding number is equal to $1$; 
then \eqref{eq:sf-wind-v} provides 
\[ \spf(H_t) = \sum_{v\in\B}1 = |\B|. \]
In order to prove the claim, note first that $\del(t)$ condition depends continuously on $(\alpha,t)$,
so the homotopy type of the loop $\RR\ni t\mapsto U^v_t$ does not depend on $\alpha$.
Hence it is enough to compute the winding number of $U^v_t$ in the simplest case, for $\alpha=0$.
In this case, we have the $\delta(t)$ boundary condition, which can be written as follows:
\begin{equation}\label{eq:AB'}
	A_t\matr{ f_+ \\ f_-} + B_t\matr{ \nabla f_+ \\ \nabla f_-} = 0, \txt{where}	
	A_t=\matr{ 1 & -1 \\ 0 & -t}, \; B_t=\matr{ 0 & 0 \\ 1 & 1}
\end{equation}
(here we denoted $f_\pm = f(\vpm)$ for brevity;
recall that $\nabla f_+ = f'_+$ and $\nabla f_- = -f'_-$).
The unitary operator $U^v_t$ is then defined by the formula $U^v_t = -(A_t-iB_t)\inv(A_t+iB_t)$,
see \cite[proof of thm.~1.4.4]{BerKuc_graphs}.
Substituting our values of $A_t$ and $B_t$, we obtain
 \[  U^v_t = -\matr{ 1 & -1 \\ i & -t-i}\inv\cdot\matr{ 1 & -1 \\ i & -t+i}
   = \frac{1}{t+2i}\matr{ -t & 2i \\ 2i & -t}, \]    
so $\det U^v_t = (t-2i)/(t+2i)$. 
When $t$ runs $\RR$ in the positive direction,
the complex number $(t-2i)/(t+2i)\in\U(\C)$ makes one revolution in the counter-clockwise direction.
Therefore, $\wind(U^v_t)=1$.
This proves our claim and thus the first part of Theorem \ref{thm:sf-HBa-path}.
\endproof

\begin{rem*}
	Of course, the operator $U^v_t$ and its determinant can be computed explicitly 
	for an arbitrary value of $\alpha$.
	But the homotopy invariance of the winding number spare us from superfluous computations.
\end{rem*}

\end{document}